\documentclass[12pt,draft]{article}
\usepackage[dvipdfm, a4paper, margin=2cm]{geometry}
\usepackage[tbtags]{amsmath}%
\usepackage{amsthm}
\usepackage{amssymb}%
\usepackage{amsfonts}
\usepackage{mathrsfs}
\usepackage[normalem]{ulem}
\usepackage{color}
\usepackage[table]{xcolor}
\usepackage{pdflscape}
\usepackage{cancel}
\usepackage{framed}
\definecolor{shadecolor}{RGB}{240,240,240}
\allowdisplaybreaks[4]
\numberwithin{equation}{section}
\newtheorem{theorem}{Theorem}[section]
\newtheorem{proposition}[theorem]{Proposition}
\newtheorem{corollary}[theorem]{Corollary}
\newtheorem{lemma}[theorem]{Lemma}
\theoremstyle{definition}
\newtheorem{remark}[theorem]{Remark}

\newtheorem{definition}[theorem]{Definition}
\DeclareMathOperator{\ad}{ad}
\DeclareMathOperator{\Aut}{Aut}

\DeclareMathOperator{\End}{End}

\DeclareMathOperator{\Res}{Res}
\DeclareMathOperator{\Span}{span}

\newcommand{\BF}{\mathbb{F}}
\newcommand{\BN}{\mathbb{N}}
\newcommand{\BZ}{\mathbb{Z}}
\newcommand{\BQ}{\mathbb{Q}}

\newcommand{\BD}{\mathbb{D}}

\newcommand{\fg}{\mathfrak{g}}

\newcommand{\D}{\mathcal{D}}
\newcommand{\B}{\mathcal{B}}
\newcommand{\E}{\mathcal{E}}
\newcommand{\bc}{\mathbf{c}}
\newcommand{\bk}{\mathbf{k}}

\newcommand{\1}{\mathbf{1}}

\newcommand{\?}{\hspace*{1.346em}}
\makeatletter
\newcommand{\leqnomode}{\tagsleft@true}
\newcommand{\reqnomode}{\tagsleft@false}
\makeatother
\makeatletter
\newtoks\@enLab 
\def\@enQmark{?}
\def\@enLabel#1#2{%
\edef\@enThe{\noexpand#1{\@enumctr}}%
\@enLab\expandafter{\the\@enLab\csname the\@enumctr\endcsname}%
\@enloop}
\def\@enSpace{\afterassignment\@enSp@ce\let\@tempa= }
\def\@enSp@ce{\@enLab\expandafter{\the\@enLab\space}\@enloop}
\def\@enGroup#1{\@enLab\expandafter{\the\@enLab{#1}}\@enloop}
\def\@enOther#1{\@enLab\expandafter{\the\@enLab#1}\@enloop}
\def\@enloop{\futurelet\@entemp\@enloop@}
\def\@enloop@{%
\ifx A\@entemp \def\@tempa{\@enLabel\Alph }\else
\ifx a\@entemp \def\@tempa{\@enLabel\alph }\else
\ifx i\@entemp \def\@tempa{\@enLabel\roman }\else
\ifx I\@entemp \def\@tempa{\@enLabel\Roman }\else
\ifx 1\@entemp \def\@tempa{\@enLabel\arabic}\else
\ifx \@sptoken\@entemp \let\@tempa\@enSpace \else
\ifx \bgroup\@entemp \let\@tempa\@enGroup \else
\ifx \@enum@\@entemp \let\@tempa\@gobble \else
 \let\@tempa\@enOther
 \fi\fi\fi\fi\fi\fi\fi\fi
\@tempa}
\newlength{\@sep} \newlength{\@@sep}
\providecommand{\sfbc}{\rmfamily\upshape}
\providecommand{\sfn}{\rmfamily\upshape}
\def\@enfont{\ifnum \@enumdepth >1\let\@nxt\sfn \else\let\@nxt\sfbc \fi\@nxt}
\def\enumerate{%
\ifnum \@enumdepth >3 \@toodeep\else
\advance\@enumdepth \@ne
\edef\@enumctr{enum\romannumeral\the\@enumdepth}\fi
\@ifnextchar[{\@@enum@}{\@enum@}}
\def\@@enum@[#1]{%
\@enLab{}\let\@enThe\@enQmark
\@enloop#1\@enum@
\ifx\@enThe\@enQmark\@warning{The counter will not be printed.%
^^J\space\@spaces\@spaces\@spaces The label is: \the\@enLab}\fi
\expandafter\edef\csname label\@enumctr\endcsname{\the\@enLab}%
\expandafter\let\csname the\@enumctr\endcsname\@enThe
\csname c@\@enumctr\endcsname7
\expandafter\settowidth
 \csname leftmargin\romannumeral\@enumdepth\endcsname
 {\the\@enLab\hskip\labelsep}%
\@enum@}
\def\@enum@{\list{{\@enfont\csname label\@enumctr\endcsname}}%
{\usecounter{\@enumctr}\def\makelabel##1{\hss\llap{##1}}%
\ifnum \@enumdepth>1\setlength{\topsep}{\@@sep}\else
\setlength{\topsep}{\@sep}\fi
\ifnum \@enumdepth>1\setlength{\itemsep}{0pt plus1pt minus1pt}%
\else \setlength{\itemsep}{\@@sep}\fi
\setlength{\parsep}{0pt plus1pt minus1pt}%
\setlength{\parskip}{0pt plus1pt minus1pt}
 }}

\def\endenumerate{\par\ifnum \@enumdepth >1\addvspace{\@@sep}\else
\addvspace{\@sep}\fi \endlist}
\makeatother %

\begin{document}
\title{Twisted modules of $\frac12\BZ$-graded modular vertex superalgebras}
\author{Xiangyu Jiao\\
School of Mathematical Sciences,  Key Laboratory of \\
MEA (Ministry of Education) \& Shanghai Key Laboratory of PMMP,\\
East China Normal University, Shanghai 200241, China\\
\textit{E-mail address:} \texttt{xyjiao@math.ecnu.edu.cn}\and
Qiang Mu\\
School of Mathematical Sciences, Harbin Normal University,\\
Harbin, Heilongjiang 150025, China\\
\textit{E-mail address:} \texttt{qmu520@gmail.com}\and
Wei Wang\\
School of Mathematical Sciences, Jiangsu University,\\
Zhenjiang, Jiangsu 212013, China\\
\textit{E-mail address:} \texttt{wangweiixyz@163.com}}
\date{}
\maketitle

\renewcommand{\thefootnote}{}
\footnote{X. Jiao is supported by the Science and Technology Commission of 
Shanghai Municipality (Grant No. 23ZR1418600, No. 22DZ2229014). 
Q. Mu is supported by the Heilongjiang Provincial Natural Science Foundation (No. PL2025A004).
W. Wang is supported by the Jiangsu Provincial Department of Education Surface Project (No. 24KJB110004).}
\renewcommand{\thefootnote}{\arabic{footnote}}

\begin{abstract}
In this paper, we investigate the theory of $g$-twisted modules for modular
$\frac{1}{2}\BZ$-graded vertex superalgebras over an algebraically closed field
$\BF$ of prime characteristic $p>2$. For a $\frac{1}{2}\BZ$-graded vertex
superalgebra $V$ and an automorphism $g$ of $V$ of finite order $T$ relatively
prime to $p$, we give a twisted version of Zhu's associative algebra, denoted by
$A_g(V)$. We prove that there is a one-to-one correspondence between the set of
equivalence classes of simple $A_g(V)$-modules and the set of equivalence classes
of simple $\frac{1}{T_0}\BN$-graded $g$-twisted $V$-modules, where $T_0$ is the
order of the automorphism $g\sigma$ with $\sigma$ the parity automorphism.

As an application, we study twisted modules for modular vertex superalgebras
associated to the affine Lie superalgebras and determine the corresponding twisted
Zhu algebra. We also compute the twisted Zhu algebra for the modular Neveu-Schwarz
vertex superalgebra and classify its irreducible twisted modules.
\end{abstract}

\section{Introduction}

The theory of vertex operator algebras and their representations has been an area
of intensive study since the work of Borcherds \cite{B86} and Frenkel-Lepowsky-Meurman
\cite{FLM}, which provided a rigorous mathematical foundation for two dimensional
conformal field theory. One of the central achievements in the representation theory
of vertex operator algebras is the associative algebra $A(V)$ introduced by Zhu \cite{Zhu}.
This algebra, now known as the Zhu algebra, establishes a powerful correspondence between
the representation theory of a vertex operator algebra $V$ and the representation theory
of an associative algebra, yielding a one-to-one correspondence between $\BN$-graded
irreducible $V$-modules and irreducible $A(V)$-modules. This framework was later extended
to the setting of twisted modules by Dong, Li, and Mason \cite{DLM}, who constructed
a $g$-twisted associative algebra $A_g(V)$ associated with any finite order automorphism
$g$ of $V$.

In recent years, there has been a systematic effort to develop the theory of vertex
algebras over fields of prime characteristic $p$.
This development is motivated in part by the study of modular Lie algebras and their
representations. As observed in \cite{JLM,LM21}, modular Lie algebras often exhibit
structural features absent in characteristic zero, such as restricted Lie algebra
structures and nontrivial $p$-centers. These phenomena naturally carry over to the
vertex algebra setting. In particular, universal vertex algebras associated with Heisenberg,
Virasoro, and affine Lie algebras in characteristic $p$ are generally not simple,
since elements arising from the $p$-center of the underlying universal enveloping algebras
generate proper ideals \cite{JLM, LM20, LM21}. This leads to the study of some important
quotient vertex algebras, such as $V^{\chi}_{\widehat\fg}(\ell,0)$, together with their irreducible
modules.

While the theory of $\BZ$-graded modular vertex algebras has seen significant progress,
many fundamental examples in mathematics and physics, such as the Clifford vertex
superalgebra and the Neveu-Schwarz vertex superalgebra, possess a $\frac{1}{2}\BZ$-grading
rather than a $\BZ$-grading. The representation theory of such vertex superalgebras
in characteristic $p$ requires a delicate treatment of the parity automorphism $\sigma$
and its interaction with other automorphisms $g$. Moreover, analytic tools commonly
used in characteristic zero, such as exponential operators of the form $e^{z\mathcal D}$,
are no longer available. They must be replaced by pseudo-exponential series
$e^{z\mathcal D}=\sum_{n\geq 0}\mathcal D^{(n)}z^n$
involving Hasse derivatives in order to avoid the problem of division by integers
\cite{DR1}, \cite{JLM}, etc.

The purpose of the present paper is to develop a general theory of $g$-twisted modules
for $\frac{1}{2}\BZ$-graded vertex superalgebras over an algebraically closed field of
characteristic $p>2$. Under the assumption that the order $T$ of the automorphism $g$
is coprime to $p$, we adapt the local system theory developed in \cite{Li96T} to the
modular case and show that any collection of mutually local $\BZ/T\BZ$-twisted vertex
operators generates a vertex superalgebra with a natural twisted module structure.

Building on the characteristic zero version of the super twisted Zhu algebra
(see \cite{Xu, DZ}), we construct a modular analogue of the twisted Zhu algebra
$A_g(V)$, and investigate its relationship with the category of $g$-twisted $V$-modules.
In particular, we establish a bijection between equivalence classes of simple
$A_g(V)$-modules and equivalence classes of simple $\frac{1}{T_0}\BN$-graded $g$-twisted
$V$-modules, where $T_0$ is the order of the automorphism $g\sigma$.

As applications of this theory, we study several well-known infinite-dimensional
Lie superalgebras. First, we examine the modular affine Lie superalgebra $\widehat{\fg}$
and its twisted version $\widehat \fg[\tau]$, where $\tau$ is an automorphism of $\fg$.
We prove that any locally truncated $\widehat \fg[\tau]$-module of level $\ell$ naturally carries
the structure of a $\tau$-twisted module for the associated universal vertex superalgebra,
and we determine the corresponding twisted Zhu algebra. Next, we classify twisted modules
for modular Clifford vertex superalgebras and provide a proof of their complete
reducibility. Finally, we study the modular Neveu-Schwarz vertex superalgebra
$V_{\mathcal {NS}}(c,0)$. By analyzing the restricted Lie superalgebra structure
of the Neveu-Schwarz algebra, we compute the twisted Zhu algebra
$A_\sigma(V_{\mathcal {NS}}(c,0))$ and show that it is isomorphic to a suitable
quotient of a polynomial algebra.

This paper is organized as follows. In Section 2, we recall the definitions of
modular vertex superalgebras and $g$-twisted modules. In Section 3, we develop
the modular theory of local systems of twisted vertex operators. In Section 4,
we construct the twisted Zhu algebra $A_g(V)$ and establish its relationship with
$g$-twisted modules. Sections 5-7 are devoted to applications: twisted affine Lie
superalgebras, modular Clifford vertex superalgebras, and the modular Neveu--Schwarz
vertex superalgebra, respectively.

\section{Preliminaries}
In this section, we briefly recall several definitions and basic results concerning
modular vertex superalgebras and their twisted modules. These preliminaries parallel
the untwisted and twisted theories developed in \cite{Li96T}, adapted here with the
necessary modifications for the modular setting.

Let $p$ be a prime number, and let $\BF$ be an algebraically closed field of
characteristic $p$. Define
\begin{align*}
    \BD=\biggl\{a\in\BQ\biggm| a=\frac{m}{n}\text{ for some }m,n\in\BZ\text{ with }p\nmid n\biggr\}.
\end{align*}
This is a subring of the field $\BQ$. There exists a ring homomorphism
$\pi:\BD \to \BZ_p\subset \BF$ which uniquely extends the natural homomorphism
from $\BZ$ to $\BZ_p$.

For $\alpha\in \BD$ and $i\in \BN$, define the binomial coefficient
\begin{equation*}
\binom{\alpha}{i}=\frac{\alpha(\alpha-1)\cdots(\alpha-i+1)}{i!}.
\end{equation*}
Since $\binom{\alpha}{i}\in \BD$ for all $\alpha\in\BD$ and $i\in\BN$,
we regard these binomial coefficients as elements of both $\BZ_p$ and $\BF$ via $\pi$.

Let $U$ be a vector space over $\BF$. We define
\begin{align*}
    U[[z,z^{-1}]]&=\bigg\{ \sum_{n\in\BZ} u(n)z^{n}\Bigm|u(n)\in U\bigg\},\\
    U((z))&=\bigg\{ \sum_{n\geq k} u(n)z^{n}\Bigm|k\in\BZ,u(n)\in U\bigg\},\\
    U\{z\}&=\bigg\{ \sum_{\alpha\in\BD} u(\alpha)z^{\alpha}\Bigm|u(\alpha)\in U\bigg\}.
\end{align*}
For $\alpha\in\BD$, the expression $(z_1+z_2)^{\alpha}$ is defined to be the
formal power series
\begin{equation}\label{binomial expansion}
    (z_1+z_2)^{\alpha}=\sum_{i=0}^\infty \binom{\alpha}{i}z_1^{\alpha-i}z_2^i\in z_1^{\alpha}\BF[z_1,z_1^{-1}][[z_2]]\subset\BF\{z_1\}[[z_2]].
\end{equation}

For $k\in \BN$, the $k$-th Hasse differential operator $\partial_z^{(k)}$ is defined by
\begin{equation}
    \partial_z^{(k)}z^{\alpha}:=\binom{\alpha}{k}z^{\alpha-k} \quad \alpha\in \BD.
\end{equation}
The associated formal exponential operator is
\begin{equation}
    e^{x\partial_z}=\sum_{n=0}^\infty x^n\partial_z^{(n)}.
\end{equation}
For any $f(z)\in U\{z\}$, the following translation property holds:
\begin{equation}
    e^{x\partial_z}f(z)=f(z+x).
\end{equation}
Recall that for $n\in\BN$,
\begin{equation}
    \partial_{z_2}^{(n)}z_2^{-1}\delta\left(\frac{z_1}{z_2}\right)=(z_1-z_2)^{-n-1}-(-z_2+z_1)^{-n-1}=
    (-1)^n\partial_{z_1}^{(n)}z_{2}^{-1}\delta\left(\frac{z_1}{z_2}\right).
\end{equation}
Consequently, if $m>n$, then
\begin{equation}
    (z_1-z_2)^m\partial_{z_2}^{(n)}z_2^{-1}\delta\left(\frac{z_1}{z_2}\right)=0.
\end{equation}
Moreover, we also record the following lemma which expresses the linear
independence of delta-function derivatives (see \cite{Li96T}):

\begin{lemma}
Let $V$ be a vector space, $\alpha,\beta\in\BD$, and let $f_j(z_2)\in V((z_2^{\beta}))$
for $j=0,1,\dots, n$. Then
\begin{align*}
    f_0(z_2)z_2^{-1}\delta\left(\frac{z_1}{z_2}\right)\left(\frac{z_1}{z_2}\right)^{\alpha}+
    f_1(z_2)\partial_{z_2}^{(1)}z_2^{-1}\delta\left(\frac{z_1}{z_2}\right)\left(\frac{z_1}{z_2}\right)^{\alpha}+\cdots\\
    +f_n(z_2)\partial_{z_2}^{(n)}z_2^{-1}\delta\left(\frac{z_1}{z_2}\right)\left(\frac{z_1}{z_2}\right)^{\alpha}=0
\end{align*}
if and only if $f_j(z_2)=0$ for all $j$.
\end{lemma}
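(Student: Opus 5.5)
The "if" direction is immediate, so the work is in showing that the vanishing of the combination forces every $f_j(z_2)$ to vanish. My plan is to reduce to a triangular system by multiplying through by powers of $(z_1-z_2)$, using the annihilation property recorded above: $(z_1-z_2)^m\partial_{z_2}^{(n)}z_2^{-1}\delta(z_1/z_2)=0$ for $m>n$. The factor $(z_1/z_2)^{\alpha}$ is harmless. Multiplication by $(z_1-z_2)^m$ commutes with it, and $f_j(z_2)$ does not involve $z_1$, so all products make sense. Here the products $(z_1-z_2)^m\cdot(\cdots)$ with $m\in\BN$ are polynomial multiplications, hence well defined on series in $V\{z_1\}\{z_2\}$.

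The key computation is $(z_1-z_2)^n\partial_{z_2}^{(n)}z_2^{-1}\delta(z_1/z_2)=z_2^{-1}\delta(z_1/z_2)$ for every $n\in\BN$. I would verify this from the identity $\partial_{z_2}^{(n)}z_2^{-1}\delta(z_1/z_2)=(z_1-z_2)^{-n-1}-(-z_2+z_1)^{-n-1}$. Multiplying each expansion by $(z_1-z_2)^n$ gives $(z_1-z_2)^{-1}-(-z_2+z_1)^{-1}$. This step uses that the expansions of $(z_1-z_2)^{-n-1}$ in each chosen direction satisfy $(z_1-z_2)^{n}(z_1-z_2)^{-n-1}=(z_1-z_2)^{-1}$ within the same expansion domain, which is a ring identity in $\BF((z_1))((z_2))$ or $\BF((z_2))((z_1))$ and holds in any characteristic. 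No division by integers is needed, since only binomial coefficients with $\BD$-entries appear and the expansion rule is the standard one.

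The argument then runs by downward induction. Suppose $f_{n}\neq 0$ is the largest nonzero index, and multiply the given relation by $(z_1-z_2)^n$. All terms with $j<n$ vanish, and the term $j=n$ becomes $f_n(z_2)z_2^{-1}\delta(z_1/z_2)(z_1/z_2)^{\alpha}$. It then remains to show that $f(z_2)z_2^{-1}\delta(z_1/z_2)(z_1/z_2)^{\alpha}=0$ implies $f=0$. To see this, write $f(z_2)=\sum_k v_k z_2^{k\beta}$ (formal in $z_2^{\beta}$), so the product equals $\sum_{k}\sum_{m\in\BZ} v_k z_1^{m+\alpha}z_2^{k\beta-m-1-\alpha}$. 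Taking the coefficient of $z_1^{\alpha}$ (that is, $m=0$) gives $\sum_k v_k z_2^{k\beta-1-\alpha}$, i.e. $z_2^{-1-\alpha}f(z_2)=0$, so $f=0$. This contradicts $f_n\neq 0$, and repeating the argument gives all $f_j=0$.

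I expect the main obstacle to be the well-definedness of the multiplications when $\alpha,\beta\in\BD$ are non-integral: one must check that the products lie in a space where associativity holds. I would handle this by factoring out $(z_1/z_2)^{\alpha}$ and viewing everything in $V((z_2^{\beta}))[[z_1^{\pm1}]]z_1^{\alpha}$, where multiplication by polynomials in $z_1,z_2$ is unproblematic. I would also note that the characteristic $p$ enters only through the binomial coefficients, which are already defined via $\pi$.
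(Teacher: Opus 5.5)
Your argument is correct, and it is the standard proof behind the result the paper cites from \cite{Li96T} without giving a proof of its own: multiply by $(z_1-z_2)^N$ for the top nonzero index $N$, use $(z_1-z_2)^N\partial_{z_2}^{(N)}z_2^{-1}\delta(z_1/z_2)=z_2^{-1}\delta(z_1/z_2)$ together with the annihilation property, and read off the coefficient of $z_1^{\alpha}$. Two small remarks. First, the coefficient of $z_1^{m+\alpha}$ carries factors $z_2^{-m-1-j-\alpha}$, so the ambient space should be $V\{z_2\}[[z_1,z_1^{-1}]]z_1^{\alpha}$ rather than $V((z_2^{\beta}))[[z_1^{\pm1}]]z_1^{\alpha}$. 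Second, if $\partial_{z_2}^{(j)}$ is meant to act on the whole product $z_2^{-1}\delta(z_1/z_2)(z_1/z_2)^{\alpha}$, as in the paper's twisted commutator formula, the Leibniz rule $\partial_{z_2}^{(j)}\bigl(z_2^{-1}\delta(z_1/z_2)(z_1/z_2)^{\alpha}\bigr)=\sum_{i=0}^{j}\binom{-\alpha}{j-i}z_2^{i-j}\bigl(\partial_{z_2}^{(i)}z_2^{-1}\delta(z_1/z_2)\bigr)(z_1/z_2)^{\alpha}$ shows the same two identities still hold, so your argument goes through unchanged.
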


\begin{definition}
A {\em vertex superalgebra} is a $\BZ_2$-graded vector space $V=V_{\bar0}\oplus V_{\bar1}$
equipped with a distinguished vector $\1\in V_{\bar0}$ and with a linear map
\begin{equation}
\begin{split}
    Y(\cdot,x):V&\to(\End V)[[x,x^{-1}]]\\
    v&\mapsto Y(v,x)=\sum_{n\in\BZ}v_n x^{-n-1},
\end{split}
\end{equation}
satisfying the following conditions for $\BZ_2$-homogeneous vectors $u\in V_{\alpha}, \ v\in V_{\beta}$ with $\alpha,\beta\in \BZ_2$:
\begin{gather}
    u_nv\in V_{\alpha+\beta}\quad \text{ for all }n\in \BZ,\\
    u_nv=0\quad \text{ for $n$ sufficiently large}
\end{gather}
(the {\em truncation condition}),
\begin{equation}
    Y(\1,x)=1_V \quad\text{(the identity operator on $V$)}
\end{equation}
(the {\em vacuum property}),
\begin{equation}\label{ecreation-property}
    Y(v,x)\1\in V[[x]]\quad\text{and}\quad \lim_{x\to0}Y(v,x)\1=v
\end{equation}
(the {\em creation property}),
and
\begin{align}
    &x_0^{-1}\delta\left(\frac{x_1-x_2}{x_0}\right)Y(u,x_1)Y(v,x_2)-(-1)^{|u||v|}x_0^{-1}\delta\left(\frac{x_2-x_1}{-x_0}\right)Y(v,x_2)Y(u,x_1)\nonumber\\
    &\hspace{3cm}=x_2^{-1}\delta\left(\frac{x_1-x_0}{x_2}\right)Y(Y(u,x_0)v,x_2)
\end{align}
(the {\em Jacobi identity}), where $|u|$ denotes the parity of the $\BZ_2$-homogeneous element $u$ from the superalgebra structure.
\end{definition}

\begin{remark}
The Jacobi identity implies the supercommutator formula:
for $\BZ_2$-homogeneous elements $u, v\in V$:
\begin{equation}\label{commutator formula}
\begin{split}
    \big[Y(u,x_1),Y(v,x_2)\big]&=\Res_{x_0} x_2^{-1}\delta\bigg(\frac{x_1-x_0}{x_2}\bigg)Y(Y(u,x_0)v,x_2)\\
    &=\Res_{x_0} x_1^{-1}\delta\bigg(\frac{x_2+x_0}{x_1}\bigg)Y(Y(u,x_0)v,x_2)\\
    &=\sum_{j\in\mathbb{N}}\partial_{x_2}^{(j)}\Bigg(x_1^{-1}\delta\bigg(\frac{x_2}{x_1}\bigg)\Bigg)Y(u_j v,x_2).
\end{split}
\end{equation}
\end{remark}

As in \cite{LM}, let $\mathcal{B}$ be the bialgebra with a basis $\{\D ^{(r)}\mid r\in\BN\}$, where
\begin{gather*}
    \D ^{(m)}\cdot \D ^{(n)}=\binom{m+n}{n}\D ^{(m+n)},\quad \D ^{(0)}=1,\\
    \Delta(\D ^{(n)})=\sum_{i=0}^n\D ^{(n-i)}\otimes \D ^{(i)},\quad \varepsilon(\D ^{(n)})=\delta_{n,0},
\end{gather*}
for $m,n \in\BN$. Set
\begin{equation*}
    e^{x\D }=\sum_{n\geq 0}x^n\mathcal D^{(n)}\in \mathcal B[[x]],
\end{equation*}
so $e^{x\D }$ acts on any $\mathcal B$-module. One has
\begin{equation*}
    e^{x\D }e^{z\D }=e^{(x+z)\D },\quad e^{x\D }e^{-x\D }=1,\quad \Delta(e^{x\D })=e^{x\D }\otimes^{x\D }.
\end{equation*}
For any power series $A(x)=\sum_{m\in\frac{1}{T}\BZ}a(m)x^m\in U[[x^{\frac{1}{T}},x^{-\frac{1}{T}}]]$, define
\begin{equation*}
    e^{z\partial_x} A(x)=\sum_{n\geq 0}\binom{m}{n}a(m)x^{m-n}.
\end{equation*}
This defines a module structure on $U[[x^{\frac{1}{T}},x^{-\frac{1}{T}}]]$ for
$\B$ with $\D^{(i)}=\partial_x^{(n)}$ and $e^{z\D}=e^{z\partial_x}$.

Let $V$ be a vertex superalgebra. Define a sequence $\{\D ^{(n)}\}_{n\in\BN}$
of linear operators on $V$ by
\begin{equation}
    \D^{(n)}v=v_{-n-1}\1\quad \text{ for } n\in \BN, v\in V.
\end{equation}
Then $V$ becomes a $\mathcal{B}$-module and
\begin{equation}
    e^{z\D}v:=\sum_{k\in \BN} z^k \D^{(k)}v =Y(v,z)\1.
\end{equation}
These operators $\D^{(n)}$ for $k\in \BN$ preserve the $\BZ_2$-grading of $V$.

Just as the characteristic zero case, we have:

\begin{lemma}
Let $V$ be a vertex superalgebra. Then the following properties hold:
\begin{enumerate}[(i)]
\item Skew symmetry: For $\BZ_2$-homogeneous vectors $u,v\in V$,
\begin{equation}\label{skew symmetry}
    Y(u,z)v=(-1)^{|u||v|}e^{z\D}Y(v,-z)u.
\end{equation}

\item Conjugation formula: For $u\in V$,
\begin{equation}
    e^{x\D}Y_W(u,x)e^{-x\D}=Y_W(e^{x\D}u,x).
\end{equation}

\item Weak supercommutativity: For any $\BZ_2$-homogeneous $u,v\in V$,
there exists $k\in \BN$ such that
\begin{equation}
    (x_1-x_2)^kY(u,x_1)Y(v,x_2)=(-1)^{|u||v|}(x_1-x_2)^kY(v,x_2)Y(u,x_1).
\end{equation}

\item Weak associativity: For any $u,v,w\in V$, there exists $l\in \BN$ such that
\begin{equation}
    (x_0+x_2)^l Y(Y(u,x_0)v,x_2)w=(x_0+x_2)^l Y(u,x_0+x_2)Y(v,x_2)w.
\end{equation}
\end{enumerate}
\end{lemma}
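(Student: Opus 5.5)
Taking the items in the order (iii), (iv), (i), (ii), I will derive everything from the Jacobi identity by formal residue manipulations. Only Hasse derivatives and binomial expansions in $\BD$ are used, so no division by integers divisible by $p$ ever occurs.

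\emph{Weak supercommutativity (iii).} Choose $k$ with $u_nv=0$ for $n\ge k$, which is possible by the truncation condition. Multiply the Jacobi identity by $x_0^k$ and take $\Res_{x_0}$. The right-hand side becomes $\Res_{x_0}x_0^k x_2^{-1}\delta(\frac{x_1-x_0}{x_2})Y(Y(u,x_0)v,x_2)$, and this vanishes because $x_0^kY(u,x_0)v\in V[[x_0]]$. On the left, $x_0^{-1}\delta(\frac{x_1-x_2}{x_0})x_0^k$ has residue $(x_1-x_2)^k$, and the same holds for the second delta term. This gives (iii).

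\emph{Weak associativity (iv).} Fix $w$ and choose $l$ with $u_nw=0$ for $n\ge l$. Multiply the Jacobi identity (applied to $w$) by $x_1^l$ and take $\Res_{x_1}$. Now $x_1^lY(u,x_1)w$ involves only nonnegative powers of $x_1$, so the second term on the left is killed after the delta substitution $x_1=x_2-x_0$ expanded in nonnegative powers of $x_0$. This yields
\[
(x_0+x_2)^lY(Y(u,x_0)v,x_2)w=\Res_{x_1}x_0^{-1}\delta\Big(\frac{x_1-x_2}{x_0}\Big)x_1^lY(u,x_1)Y(v,x_2)w .
\]
The right-hand side equals $(x_0+x_2)^lY(u,x_0+x_2)Y(v,x_2)w$, which is the standard delta-function substitution identity. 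Every expansion here is a binomial expansion with integer exponents, so it holds verbatim in characteristic $p$.

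\emph{Skew symmetry (i).} First I establish the translation property $Y(\D^{(n)}v,x)=\partial_x^{(n)}Y(v,x)$. To get it, apply the Jacobi identity to $\1$ with $u=v$ and $v=\1$, take $\Res_{x_1}$ against suitable powers, and use $\D^{(n)}v=v_{-n-1}\1$. Concretely, set $w=\1$ in (iv) to get $Y(Y(v,x_0)\1,x_2)=Y(v,x_2+x_0)$ after cancelling the polynomial factor. This cancellation is legitimate because both sides lie in $(\End V)[[x_0]]((x_2))$ after applying to any vector; by truncation they lie in a space where multiplication by $(x_0+x_2)^l$ is injective. Comparing coefficients of $x_0^n$ gives the translation property. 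Next, apply both sides of the proposed skew symmetry identity to $\1$ through the Jacobi identity: the creation property together with (iii) gives
\[
(x_1-x_2)^kY(u,x_1)Y(v,x_2)\1=(-1)^{|u||v|}(x_1-x_2)^kY(v,x_2)Y(u,x_1)\1 .
\]
Then $Y(u,x_1)e^{x_2\D}v$ and $(-1)^{|u||v|}Y(v,x_2)e^{x_1\D}u$ become two expansions of the same rational-type expression. Setting $x_2=0$ in the first and using $Y(v,x_2)e^{x_1\D}u=e^{x_1\D}Y(v,x_2-x_1)u$, which is the conjugation formula proved next, yields (i).

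\emph{Conjugation formula (ii) and the main obstacle.} From the translation property, $e^{z\partial_x}Y(u,x)=Y(u,x+z)=Y(e^{z\D}u,x)$. From the commutator formula \eqref{commutator formula} with $v=\1$-type arguments, namely $[\D^{(n)},Y(u,x)]=\sum_{i=1}^{n}\partial_x^{(i)}Y(u,x)\D^{(n-i)}$, one gets $e^{z\D}Y(u,x)e^{-z\D}=e^{z\partial_x}Y(u,x)$. In characteristic zero this is an exponentiation of $[\D,Y]=Y'$, but here it must be proved directly. I will use the coproduct $\Delta(e^{z\D})=e^{z\D}\otimes e^{z\D}$: show $e^{z\D}Y(u,x)w=Y(e^{z\D}u,x)e^{z\D}w$ by writing $e^{z\D}Y(u,x)w=Y(Y(u,x)w,z)\1$ and applying (iv) with $w'=\1$. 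This route, which avoids derivations entirely, is the step I expect to be the most delicate. The difficulty is justifying the cancellation of the factors $(x_0+x_2)^l$ and the order of expansions in the iterate-versus-product formal series. Once that is done, the other items follow by routine formal calculus exactly as in \cite{Li96T}.
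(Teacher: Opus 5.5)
Your proposal is correct. The paper itself gives no argument beyond saying the lemma holds ``just as in characteristic zero.'' For (i), (iii), (iv) and the translation property $Y(\D^{(n)}v,x)=\partial_x^{(n)}Y(v,x)$, your derivations are the standard formal-calculus ones. They involve only integer binomial coefficients, so they transfer verbatim.

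The one place where the characteristic-zero template genuinely breaks is (ii). There one usually exponentiates $[\D,Y(u,x)]=\frac{d}{dx}Y(u,x)$, which requires dividing by $n!$. Your replacement is the right characteristic-free route:
$e^{z\D}Y(u,x)w=Y(Y(u,x)w,z)\1=Y(u,x+z)Y(w,z)\1=Y(e^{z\D}u,x)e^{z\D}w$.

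The step you flag as delicate is in fact trivial. Since $u_n\1=0$ for $n\ge 0$ by the creation property, you may take $l=0$ in (iv) when the vector acted on is $\1$. Equivalently, take $\Res_{x_1}$ of the Jacobi identity applied to $\1$; the second term vanishes because $Y(u,x_1)\1\in V[[x_1]]$. So no factor $(x_0+x_2)^l$ ever has to be cancelled.

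Three small repairs are needed.
\begin{enumerate}[(1)]
\item For the translation property, the vacuum belongs in the middle slot, so that $Y(\1,x_2)=1$, with $w$ arbitrary. Setting $w=\1$ in (iv) does not give what you want. Taking $\Res_{x_1}$ of the Jacobi identity with $v=\1$ gives $Y(u,x_2+x_0)=Y(Y(u,x_0)\1,x_2)$ directly, with no cancellation.
\item If you do cancel in the translation-property step, the space where $(x_0+x_2)^l$ acts injectively is $V((x_2))[[x_0]]$, not $V[[x_0]]((x_2))$. Neither side lies in the latter.
\item In (i), before setting $x_2=0$ you must enlarge $k$ so that also $x^kY(v,x)u\in V[[x]]$. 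This is what makes $(x_1-x_2)^k e^{x_1\D}Y(v,x_2-x_1)u$ free of negative powers of $x_2$, and hence makes the specialization legitimate.
\end{enumerate}
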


\begin{remark}
In the definition of a vertex superalgebra,
the Jacobi identity axiom may be replaced by any one of the following equivalent
sets of axioms as in the characteristic zero case:
\begin{enumerate}[(i)]
\item Weak supercommutativity and weak associativity.

\item Weak associativity and skew symmetry.

\item Weak supercommutativity and conjugation formula.
\end{enumerate}
\end{remark}

\begin{definition}\label{dgradedva}
A {\em $\frac12\BZ$-graded vertex superalgebra} is a vertex superalgebra
$V=V_{\bar 0}\oplus V_{\bar 1}$, equipped with a $\frac{1}{2}\BZ$-grading
$V=\bigoplus_{q\in \frac12\BZ}V_{q}$ such that
\begin{equation}
    \1\in V_{0}, \ \ V_{\bar0}= \bigoplus_{n\in\BZ}V_{n},\ \
    V_{\bar1}= \bigoplus_{n\in \frac12+\BZ}V_{n},
\end{equation}
and for all $u\in V_{m}$, $m,n\in \frac12\BZ$, and $k\in\BZ$,
\begin{equation}
    u_{k}V_{n}\subset V_{m+n-k-1}.
\end{equation}
An element $a\in V_{n}$ with $n\in \frac{1}{2}\BZ$ is called
{\em $\frac{1}{2}\BZ$-homogeneous}, and we write $\deg a=n$.
\end{definition}

Let $V^1$ and $V^2$ be two vertex superalgebras.
A {\em vertex superalgebra homomorphism} from $V^1$ to $V^2$ is a linear
map $\phi:V^1\to V^2$ satisfying the following conditions for all $a,b\in V^1$:
\begin{equation}
    \phi(\1)=\1, \quad \phi(Y(a,z)b)=Y(\phi(a),z)\phi(b).
\end{equation}
An {\em automorphism of a vertex superalgebra} $V$ is a one-to-one
homomorphism from $V$ to itself. Every vertex superalgebra
$V=V_{\bar{0}}\oplus V_{\bar{1}}$ admits a canonical automorphism
$\sigma$ of order $2$, called the {\em parity automorphism}, defined by
\begin{equation*}
\sigma(a)=(-1)^{|a|}a
\end{equation*}
for any $\BZ_2$-homogeneous element $a\in V$. We will use $\sigma$ to
denote this canonical automorphism of a vertex superalgebra throughout this paper.

Throughout this paper, let $g$ be an automorphism of a vertex superalgebra $V$ of
order $T$, where $T$ is relatively prime to the characteristic $p$ of the base field.
Then $g$ acts semisimply on $V$, and we have the eigenspace decomposition
\begin{equation}\label{eq:V=sumVi}
    V=\bigoplus_{j=0}^{T-1}V^j,\quad V^j=\{u\in V\mid g (u)=\eta^j u\},
\end{equation}
where $\eta$ is a fixed primitive $T$-th root of unity.

\begin{definition}\label{twisted module}
A {\em $g$-twisted $V$-module} is a vector space $W$ equipped with a linear map
\begin{align*}
Y_{W}(\cdot,x):\ V&\to (\End W)[[x^{1/T},x^{-1/T}]],\\
v&\mapsto Y_W(v,x)=\sum_{n\in\frac{1}{T}\BZ}v_n x^{-n-1},\quad (v_n\in\End W),
\end{align*}
such that for $ u \in V^r $ (with $r \in \{0, 1, \dots, T-1\}$),
and all $v \in V $, $w \in W$, the following properties hold:
\begin{gather*}
Y_W(u,x)=\sum_{n\in\frac{r}{T}+\BZ}u_n x^{-n-1}, \\
u_n w=0\quad\text{for all sufficiently large $n$},\\
Y_W(\1,x)=1_W,\\
x_0^{-1}\delta\bigg(\frac{x_1-x_2}{x_0}\bigg)Y_W(u,x_1)Y_W(v,x_2)
-(-1)^{|u||v|}x_0^{-1}\delta\bigg(\frac{x_2-x_1}{-x_0}\bigg)Y_W(v,x_2)Y_W(u,x_1)\\
=x_2^{-1}\delta\bigg(\frac{x_1-x_0}{x_2}\bigg)\bigg(\frac{x_1-x_0}{x_2}\bigg)^{-\frac{r}{T}}Y_W(Y(u,x_0)v,x_2),
\end{gather*}
where the power is expanded using using equation \eqref{binomial expansion}.
\end{definition}

\begin{remark}\label{supermodule}
A $g$-twisted $V$-module is not necessarily $\BZ_2$-graded.
Following the idea of \cite{DNR},
if $W=W_{\bar0}\oplus W_{\bar1}$ is a $\BZ_2$-graded twisted module such that
\begin{equation}
    u_n W_\beta\subset W_{\alpha+\beta},
\end{equation}
for all $u\in V_{\alpha}$, $\alpha, \beta\in \BZ_2$, and $n\in \frac{1}{T}\BZ$,
then we call $W$ a {\em $g$-twisted super $V$-module}.
\end{remark}

\begin{remark}
Let $W$ be a twisted $V$-module and $u\in V^r$. The twisted Jacobi identity
yields the following formulae:
\begin{enumerate}[(i)]
\item {\em Twisted commutator formula:} For $v\in V$,
\begin{equation}\label{twisted commutator}
\begin{split}
    \big[Y_W(u,x_1),Y_W(v,x_2)\big]&=\Res_{x_0} x_2^{-1}\delta\bigg(\frac{x_1-x_0}{x_2}\bigg)\bigg(\frac{x_1-x_0}{x_2}\bigg)^{-\frac{r}{T}}Y_W(Y(u,x_0)v,x_2)\\
    &=\Res_{x_0} x_1^{-1}\delta\bigg(\frac{x_2+x_0}{x_1}\bigg)\bigg(\frac{x_2+x_0}{x_1}\bigg)^{\frac{r}{T}}Y_W(Y(u,x_0)v,x_2)\\
    &=\sum_{j\in\mathbb{N}}\partial_{x_2}^{(j)}\Bigg(x_1^{-1}\delta\bigg(\frac{x_2}{x_1}\bigg)\bigg(\frac{x_2}{x_1}\bigg)^{\frac{r}{T}}\Bigg)Y_W(u_j v,x_2),
\end{split}
\end{equation}

\item{\em Weak supercommutativity:} For any $\BZ_2$-homogeneous $u,v\in V$,
there exists $k\in \BN$ such that
\begin{equation}\label{Weak supercommutativityp}
    (x_1-x_2)^kY_W(u,x_1)Y_W(v,x_2)=(-1)^{|u||v|}(x_1-x_2)^kY_W(v,x_2)Y_W(u,x_1);
\end{equation}

\item {\em Twisted iterate formula:} For $v\in V$,
\begin{equation}\label{Twisted iterate}
\begin{split}
    Y_W(Y(u, x_0)v, x_2) &= \Res_{x_1} \left( \frac{x_1 - x_0}{x_2} \right)^{\frac{r}{T}} \Bigg(
    x_0^{-1} \delta\left( \frac{x_1 - x_0}{x_2} \right) Y_W(u, x_1) Y_W(v, x_2) \\
    &\? - (-1)^{|u||v|} x_0^{-1} \delta\left( \frac{x_2 - x_1}{-x_0} \right) Y_W(v, x_2) Y_W(u, x_1) \Bigg).
\end{split}
\end{equation}
\item {\em Twisted weak associativity:} For $w\in W$, let $k\in \BN$ be such that
$x^{k+\frac{r}{T}}Y_W(u,z)w\in W[[z]]$. Then for every $v\in V$,
\begin{equation}\label{associativity}
    (x_2+x_0)^{k+\frac{r}{T}}Y_W(Y(u,x_0)v,x_2)w=(x_0+x_2)^{k+\frac{r}{T}}Y_W(u,x_0+x_2)Y_W(v,x_2)w.
\end{equation}
\end{enumerate}
\end{remark}

\begin{remark}
The twisted Jacobi identity in Definition \ref{twisted module} on twisted
modules for vertex superalgebras can be equivalently replaced by the
supercommutativity \eqref{Weak supercommutativityp} and the the twisted
associativity \eqref{associativity}. The proof is the same as the proof
of Lemma 2.8 in \cite{Li96T}.
\end{remark}

Just as in the characteristic zero case (see \cite{Li96T}), we have:

\begin{proposition}\label{relation untwited and twisted comm formula}
Let $g$ be an automorphism of a vertex superalgebra $V$ of order $T$,
and let $(W, Y_W)$ be a faithful $g$-twisted $V$-module.
Assume $u\in V^r$ with \textcolor{red}{$r\in \{0,\cdots, T-1\}$} and $v, w^{(0)},\cdots, w^{(k)}\in V$
are $\BZ_2$-homogeneous elements. Then the super commutator formula
\begin{equation}\label{comm formula}
    \big[Y(u,x_1),Y(v,x_2)\big]
    =\sum_{j=0}^k Y(w^{(j)},x_2)\partial_{x_2}^{(j)}\Bigg(x_1^{-1}\delta\bigg(\frac{x_2}{x_1}\bigg)\Bigg)
\end{equation}
holds on $V$ if and only if the twisted commutator
\begin{equation}\label{twisted comm formula}
    \big[Y_W(u,x_1),Y_W(v,x_2)\big]=\sum_{j=0}^k Y_W(w^{(j)},x_2) \partial_{x_2}^{(j)}\Bigg(x_1^{-1}\delta\bigg(\frac{x_2}{x_1}\bigg)\bigg(\frac{x_2}{x_1}\bigg)^{\frac{r}{T}}\Bigg)
\end{equation}
holds on $W$. In this case, we have
\begin{equation}\label{uvw}
    u_jv=w^{(j)} \quad (0\leq j\leq k), \quad u_jv=0 \text{ for } j>k.
\end{equation}
Moreover, \eqref{comm formula} implies \eqref{twisted comm formula} and \eqref{uvw},
regardless whether $W$ is faithful or not.
\end{proposition}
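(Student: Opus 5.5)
The argument runs entirely through the twisted commutator formula \eqref{twisted commutator} and the linear independence lemma for delta-function derivatives.

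\emph{Forward direction.} Assume \eqref{comm formula} holds on $V$. Apply both sides to $\1$ and take $\lim_{x_2\to 0}$. By the creation property and $Y(v,x_2)\1\in V[[x_2]]$, the left side becomes $Y(u,x_1)v-(-1)^{|u||v|}Y(v,0)Y(u,x_1)\1$. The second term is a power series in $x_1$ with no negative powers. On the right side we use
\[
\lim_{x_2\to0}\partial_{x_2}^{(j)}x_1^{-1}\delta(x_2/x_1)=x_1^{-j-1}.
\]
Comparing the coefficients of $x_1^{-j-1}$ for $j\ge 0$ then gives $u_jv=w^{(j)}$ for $0\le j\le k$ and $u_jv=0$ for $j>k$. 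This is \eqref{uvw}, and it uses only the structure of $V$.

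Once \eqref{uvw} is known, the third line of \eqref{twisted commutator}, valid in any $g$-twisted module whether faithful or not, reads
\[
\big[Y_W(u,x_1),Y_W(v,x_2)\big]=\sum_{j\ge0}\partial_{x_2}^{(j)}\Big(x_1^{-1}\delta\big(\tfrac{x_2}{x_1}\big)\big(\tfrac{x_2}{x_1}\big)^{r/T}\Big)Y_W(u_jv,x_2).
\]
The sum truncates at $j=k$, and substituting $u_jv=w^{(j)}$ gives exactly \eqref{twisted comm formula}. The super sign is already built into the bracket. This proves the ``moreover'' clause and the forward implication.

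\emph{Converse.} Assume $W$ is faithful and \eqref{twisted comm formula} holds. Write $K=\max(k,N)$, where $N$ is such that $u_jv=0$ for $j>N$. Subtracting \eqref{twisted comm formula} from the general formula above gives
\[
\sum_{j=0}^{K}Y_W(u_jv-w^{(j)},x_2)\,\partial_{x_2}^{(j)}\Big(x_1^{-1}\delta\big(\tfrac{x_2}{x_1}\big)\big(\tfrac{x_2}{x_1}\big)^{r/T}\Big)=0,
\]
with $w^{(j)}=0$ for $j>k$.

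This is the step I expect to be the main technical point: we must rewrite this expression in the form of the linear independence lemma. Use $\delta(x_2/x_1)(x_2/x_1)^{r/T}=\delta(x_1/x_2)(x_1/x_2)^{-r/T}$ and expand the Hasse derivatives by the Leibniz rule. Each term $\partial_{x_2}^{(j)}$ of the product is then a triangular combination of $\partial_{x_2}^{(i)}x_2^{-1}\delta(x_1/x_2)\,(x_1/x_2)^{-r/T}$ with coefficients in $\BF[x_2^{\pm1}]$ and invertible leading coefficient. The invertibility holds because the top term has coefficient $1$; this is where we avoid dividing by integers that may vanish in characteristic $p$.

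The linear independence lemma then forces, by downward induction on $j$, that $Y_W(u_jv-w^{(j)},x_2)=0$ for every $j$. Since $W$ is faithful, $u_jv=w^{(j)}$ for $j\le k$ and $u_jv=0$ for $j>k$. Finally, the untwisted commutator formula \eqref{commutator formula} on $V$ turns these identities into \eqref{comm formula}.
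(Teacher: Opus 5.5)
Your proposal is correct and is essentially the standard argument of \cite{Li96T}, which is where the paper sends the reader (it gives no proof of its own). You read off $u_jv=w^{(j)}$ from the untwisted formula, substitute into the general twisted commutator formula, and for the converse combine the linear independence lemma with faithfulness; apply the lemma to $Y_W(\cdot,x_2)w$ for $w\in W$, so that the coefficients lie in $W((x_2^{1/T}))$ as it requires.

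The one step to tidy is the order of operations in the forward direction. In general neither $Y(v,x_2)Y(u,x_1)\1$ nor $\partial_{x_2}^{(j)}x_1^{-1}\delta(x_2/x_1)$ can be evaluated at $x_2=0$, since both carry negative powers of $x_2$ attached to nonnegative powers of $x_1$. So first take the coefficient of $x_1^{-m-1}$ with $m\ge 0$; by $u_m\1=0$ this gives $u_mY(v,x_2)\1=\sum_{j=0}^{\min(k,m)}\binom{m}{j}x_2^{m-j}Y(w^{(j)},x_2)\1\in V[[x_2]]$, and only then set $x_2=0$.
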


\begin{definition}\label{graded modules}
Let $V=\bigoplus_{q\in \frac{1}{2}\BZ} V_q$ be a $\frac{1}{2}\BZ$-graded
vertex superalgebra, $g$ be an automorphism of $V$ of finite order $T$,
and the order of the automorphism $g\sigma$ is $T_0$.
A $g$-twisted $V$-module $W$ is said to be \emph{$\frac{1}{T_0}\BN$-graded}
if $W=\bigoplus_{n\in\frac{1}{T_0}\BN}W_{n}$ with $W_0\neq 0$ such that
\begin{equation}
    u_n W_{r}\subset W_{r+m-n-1}
\end{equation}
for $u\in V_{q}$, $n\in \frac{1}{T}\BZ$, $r\in\frac{1}{T_0}\BN$.
\end{definition}

\begin{remark}
The definition of a $\frac{1}{T_0}\BN$-graded $g$-twisted $V$-module is
the same as the definition of an admissible $g$-twisted $V$-module given
in \cite{DZ} for the characteristic zero case.
\end{remark}

\section{Local systems of twisted vertex operators}

Let $W=W_{\bar0}\oplus W_{\bar1}$ be a $\BZ_2$-graded vector space.
Then $\End W=(\End W)_{\bar0}\oplus (\End W)_{\bar1}$ is also $\BZ_2$-graded, where
\begin{align*}
    (\End W)_{\bar0}&=\{f\in \End W \mid f(W_{\bar0})\subset W_{\bar0},\ f(W_{\bar1})\subset W_{\bar1}\},\\
    (\End W)_{\bar1}&=\{f\in \End W\mid f(W_{\bar0})\subset W_{\bar1},\ f(W_{\bar1})\subset W_{\bar0}\}.
\end{align*}
Consequently,
\begin{equation*}
    (\End W)[[x^{1/T},x^{-1/T}]]=(\End W)_{\bar0}[[x^{1/T},x^{-1/T}]]\oplus (\End W)_{\bar1}[[x^{1/T},x^{-1/T}]]
\end{equation*}
is also $\BZ_2$-graded.
We equip $(\End W)[[x^{1/T},x^{-1/T}]]$ with a
natural $\BZ_{T}$-grading
\begin{equation}
    (\End W)[[x^{1/T},x^{-1/T}]]=\bigoplus_{j=0}^{T-1}x^{-j/T}(\End W)[[x,x^{-1}]].
\end{equation}
Define an automorphism $g$ of $(\End W)[[x^{1/T},x^{-1/T}]]$ by
\begin{equation*}
    g f(x^{1/T})=f(\eta^{-1}x^{1/T}), \quad \text{ for } f(x)\in (\End W)[[x,x^{-1}]].
\end{equation*}
Set
\begin{equation}
    \E_T(W)=\Bigl\{a(x)\in(\End W)[[x^{1/T},x^{-1/T}]]\Bigm|
    a(x)w\in W((x^{1/T})), \forall w\in W\Bigr\}.
\end{equation}
Then $\E_T(W)$ is stable under the action of $g$, and decomposes as
\begin{equation}
    \E_T(W)=\E_T(W)^0\oplus \cdots \oplus \E_T(W)^{T-1},
\end{equation}
where
\begin{equation*}
    \E_T(W)^n=\{a(x)\in \E_T(W) \mid g a(x)=\eta^n a(x)\},\quad n\in \BZ_T.
\end{equation*}
Let $1_W$ denote the identity operator on $W$. Clearly $1_W\in \E_T(W)^0$.

\begin{remark}
Note that for $a(x)\in \E_T(W)$ and $n\in \BN$, we have $\partial_x^{(n)}a(x)\in \E_T(W)$.
Then $\E_T(W)$ is naturally a module over the bialgebra $\B$, with
\begin{equation*}
    \D^{(n)}\cdot a(x)=\partial_x^{(n)}a(x),\quad n\in\BN.
\end{equation*}
\end{remark}

We fix some homogeneity conventions for an element $a(x)\in\E_T(W)$.
We say $a(x)\in\E_T(W)$ is $\BZ_T$-homogeneous if $a(x)\in\E_T(W)^n$ for some $n\in\BZ$, and
$\BZ_2$-homogeneous if $a(x)\in\E_T(W)_{\bar0}\cup\E_T(W)_{\bar1}$. If $a(x)$
is both $\BZ_T$-homogeneous and $\BZ_2$-homogeneous, we say $a(x)$ is homogeneous.

Let $a(x),b(x)\in\E_T(W)$ be homogeneous. We say that
$a(x)$ and $b(x)$ are \emph{mutually local} if there exists $k\in\BN$ such that
\begin{equation*}
    (x_1-x_2)^k a(x_1)b(x_2)=(-1)^{|a(x)||b(x)|}(x_1-x_2)^k b(x_2)a(x_1),
\end{equation*}
where $|a(x)|$ denotes the $\mathbb{Z}_2$-parity of $a(x)$.
A subspace $S\subset \E_T(W)$ is said to be \emph{local} if it is
spanned by homogeneous elements
and any two homogeneous elements of $S$ are mutually local.
A \emph{local system} on $W$ is a maximal local subspace of $\E_T(W)$.
By the same argument as in the characteristic-zero case \cite{Li96T}, we obtain:

\begin{lemma}\label{th:local-partial}
If $a(x)$ and $b(x)$ are mutually local, then $a(x)$ and $\partial_x^{(n)}b(x)$
are mutually local for every $n\in\BN$.
\end{lemma}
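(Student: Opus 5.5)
Our plan is to adapt the characteristic-zero argument. In characteristic zero one differentiates the locality relation and then raises the exponent. Here the ordinary derivative $\partial_{x_2}$ is replaced by the Hasse derivatives $\partial_{x_2}^{(n)}$. The needed facts are the Leibniz rule $\partial^{(n)}(fg)=\sum_{i+j=n}\partial^{(i)}f\,\partial^{(j)}g$ and the formula $\partial_{x_2}^{(i)}(x_1-x_2)^m=(-1)^i\binom{m}{i}(x_1-x_2)^{m-i}$. Both hold over $\BF$ because all coefficients are integer binomials.

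Let $a(x),b(x)$ be homogeneous, and fix $k\in\BN$ with
\begin{equation*}
(x_1-x_2)^k a(x_1)b(x_2)=(-1)^{|a(x)||b(x)|}(x_1-x_2)^k b(x_2)a(x_1).
\end{equation*}
The operator $\partial_x^{(n)}$ preserves both the $\BZ_2$-parity and the $g$-eigenspace of $b(x)$. Indeed, $g$ rescales $x^{\alpha}$ and $\partial_x^{(n)}$ maps $x^\alpha$ to a multiple of $x^{\alpha-n}$ with $n\in\BZ$. Hence $\partial_x^{(n)}b(x)$ is homogeneous with the same parity, and the sign is unchanged.

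We will show by induction on $n$ that
\begin{equation*}
(x_1-x_2)^{k+n}a(x_1)\,\partial_{x_2}^{(n)}b(x_2)=(-1)^{|a||b|}(x_1-x_2)^{k+n}\,\partial_{x_2}^{(n)}b(x_2)\,a(x_1).
\end{equation*}
The case $n=0$ is the hypothesis. For the inductive step, set $F(x_1,x_2)=a(x_1)b(x_2)-(-1)^{|a||b|}b(x_2)a(x_1)$, so that $(x_1-x_2)^kF=0$. The operator $\partial_{x_2}^{(i)}$ commutes with left multiplication by $a(x_1)$ and right multiplication by $a(x_1)$. Moreover, these products are well defined termwise, because we only multiply by polynomials in $x_1,x_2$.

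We apply $\partial_{x_2}^{(n)}$ to $(x_1-x_2)^{k+n}F=0$ and use the Leibniz rule:
\begin{equation*}
0=\sum_{i=0}^{n}(-1)^i\binom{k+n}{i}(x_1-x_2)^{k+n-i}\,\partial_{x_2}^{(n-i)}F.
\end{equation*}
For $i\ge1$ we have $n-i<n$, and the term carries the factor $(x_1-x_2)^{k+n-i}=(x_1-x_2)^{k+(n-i)}$. By the induction hypothesis, $(x_1-x_2)^{k+m}\partial_{x_2}^{(m)}F=0$ for all $m<n$. Multiplying by the extra nonnegative power $(x_1-x_2)^{0}$ is harmless, so every term with $i\ge1$ vanishes. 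What remains is $(x_1-x_2)^{k+n}\partial_{x_2}^{(n)}F=0$, which is the claim. This argument avoids dividing by $n!$ and uses only integer binomial coefficients, so it is valid in characteristic $p$.

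The main point requiring care is legitimacy rather than computation. First, $F$ lives in $(\End W)[[x_1^{\pm1/T},x_2^{\pm1/T}]]$, where the products $a(x_1)b(x_2)$ and $b(x_2)a(x_1)$ exist separately because $a,b\in\E_T(W)$. Second, the Leibniz rule for Hasse derivatives must hold for a product of a polynomial with such a doubly infinite series with exponents in $\frac1T\BZ$. We will check the second point on monomials $x_2^{\alpha}$ with $\alpha\in\BD$, using the Vandermonde identity $\binom{\alpha+m}{n}=\sum_{i}\binom{m}{i}\binom{\alpha}{n-i}$ in $\BD$, mapped to $\BF$ via $\pi$. The case of $\partial_x^{(n)}a(x)$ against $b(x)$ then follows by the symmetry of locality.
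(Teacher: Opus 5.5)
Your proof is correct and is essentially the characteristic-zero argument from \cite{Li96T} that the paper invokes without writing out, namely differentiate the locality relation and raise the exponent of $(x_1-x_2)$. The one change needed in characteristic $p$, which you handle properly, is that iterating a first derivative cannot produce $\partial_x^{(n)}$ for $n\ge p$; you instead apply $\partial_{x_2}^{(n)}$ directly to $(x_1-x_2)^{k+n}F=0$ and remove the lower-order terms by strong induction using the Leibniz rule for Hasse derivatives.
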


Let $a(x),b(x)\in\E_T(W)$ be homogeneous and mutually local,
with $a(x)\in\E_T(W)^r$ for some $r\in \BZ$. For $n\in \BZ$, we define
\begin{align*}
    a(x)_n b(x)&= \Res_{x_1}\Res_{x_0}x_0^n\bigg(\frac{x_1-x_0}{x}\bigg)^{\frac{r}{T}} \bigg(x_0^{-1}\delta\bigg(\frac{x_1-x}{x_0}\bigg)a(x_1)b(x)\\
    &\?- (-1)^{|a(x)||b(x)|}x_0^{-1}\delta\bigg(\frac{x-x_1}{-x_0}\bigg)b(x)a(x_1)\bigg).
\end{align*}
This expression lies in $\E_T(W)$.

Using the same arguments as in the characteristic-zero setting (see \cite{Li96T}), we obtain:

\begin{proposition}
Let $a(x),b(x),c(x)\in \E_T(W)$ be homogeneous and pairwise mutually local.
Then for every integer $n$,
the operators $a(x)_{n}b(x)$ and $c(x)$ are mutually local.
\end{proposition}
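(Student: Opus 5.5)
The plan follows the classical Dong-type argument, as in Li's treatment of local systems, adapted to the twisted and modular setting. Throughout we keep only integer exponents of $(x_1-x_2)$ and $(x_0+x)$, so that no division by integers divisible by $p$ is needed.

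\emph{Step 1: choose exponents.} Fix homogeneous $a(x)\in\E_T(W)^r$ and $b(x),c(x)$. By pairwise locality there is $k\in\BN$ such that
\begin{gather*}
(x_1-x)^k a(x_1)b(x)=\epsilon_{ab}(x_1-x)^k b(x)a(x_1),\\
(x_1-x_2)^k a(x_1)c(x_2)=\epsilon_{ac}(x_1-x_2)^k c(x_2)a(x_1),\\
(x-x_2)^k b(x)c(x_2)=\epsilon_{bc}(x-x_2)^k c(x_2)b(x),
\end{gather*}
where $\epsilon_{ab}=(-1)^{|a||b|}$ and so on. Since $b$ is $\BZ_2$-homogeneous with $|a(x)_nb(x)|=|a|+|b|$, the sign we need is $(-1)^{(|a|+|b|)|c|}=\epsilon_{ac}\epsilon_{bc}$.

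\emph{Step 2: reduce to a finite sum.} Write $a(x)_nb(x)$ through the residue formula. Because of the factor $(x_1-x)^k$, for $n\geq k$ the delta-function combination $x_0^{-1}\delta(\frac{x_1-x}{x_0})\,a b-\epsilon_{ab}x_0^{-1}\delta(\frac{x-x_1}{-x_0})\,b a$ multiplied by $x_0^n$ vanishes. This follows from the identity $x_0^{k}\bigl(x_0^{-1}\delta(\tfrac{x_1-x}{x_0})-x_0^{-1}\delta(\tfrac{x-x_1}{-x_0})\bigr)=(x_1-x)^k x^{-1}\delta(\tfrac{x_1-x_0}{x})$, combined with the locality of $a,b$. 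So only $n<k$ needs work.

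\emph{Step 3: the main estimate.} Take $N$ large, for instance $N=4k-n$ suffices in the untwisted computation. We aim to show
\[
(x-x_2)^N\,(a(x)_nb(x))\,c(x_2)=\epsilon_{ac}\epsilon_{bc}(x-x_2)^N\,c(x_2)\,(a(x)_nb(x)).
\]
To do this, expand $(x-x_2)^N=\sum_{i}\binom{N}{i}(x-x_1)^{i}(x_1-x_2)^{N-i}$ inside the residue, using that $x_1=x+x_0$ on the support of the delta functions. Then $(x-x_1)^i$ equals $(-x_0)^i$ times a unit expansion on both delta terms, and this raises the power of $x_0$. Terms with $i$ large vanish by Step 2 applied with exponent $n+i\geq k$. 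Terms with $i$ small leave $(x_1-x_2)^{N-i}$ with $N-i\geq k$ and $N-i-k\geq k$, which lets us move $c(x_2)$ past $a(x_1)$ and then past $b(x)$ in each of the two products $a(x_1)b(x)c(x_2)$ and $b(x)a(x_1)c(x_2)$. Each move picks up the appropriate sign.

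Throughout, the twisting factor $\bigl(\frac{x_1-x_0}{x}\bigr)^{r/T}$ involves only $x_0,x_1,x$ and commutes with all these manipulations. Equivalently, multiply by $x_1^{r/T}$ and work with $x_1^{r/T}a(x_1)$, which has integral powers. The binomial coefficients that appear are integers, or lie in $\BD$ since $p\nmid T$, so the identities hold verbatim over $\BF$.

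\emph{Main obstacle.} The delicate point is justifying that all products such as $a(x_1)b(x)c(x_2)$ and the substitutions $x_1\mapsto x+x_0$ are well defined on $W$. The expansions must live in $W((x^{1/T}))((x_0))$-type spaces, so that multiplying by $(x-x_2)^N$ and swapping the order of expansions is legitimate. I would handle this exactly as in Li's proof, by applying everything to a fixed $w\in W$ and using $a(x)w\in W((x^{1/T}))$. The modular setting then only requires checking that no denominators divisible by $p$ appear, which holds since every coefficient comes from the binomial expansion \eqref{binomial expansion}.
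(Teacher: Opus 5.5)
Your strategy is the one the paper intends; it simply cites Li's Dong-lemma argument. Steps 1--2 are sound, though slightly misphrased. Put $D=x_0^{-1}\delta\bigl(\frac{x_1-x}{x_0}\bigr)a(x_1)b(x)-\epsilon_{ab}\,x_0^{-1}\delta\bigl(\frac{x-x_1}{-x_0}\bigr)b(x)a(x_1)$. Locality of $a,b$ gives $x_0^kD=x^{-1}\delta\bigl(\frac{x_1-x_0}{x}\bigr)(x_1-x)^ka(x_1)b(x)$. This does not vanish, but it involves only nonnegative powers of $x_0$. The same holds for $x_0^kD\,c(x_2)$ and $c(x_2)\,x_0^kD$, and that is what kills terms after applying $\Res_{x_0}x_0^{n}\bigl(\frac{x_1-x_0}{x}\bigr)^{r/T}$.

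\textbf{The gap is in Step 3.} You write $(x-x_2)^N=\sum_i\binom{N}{i}(x-x_1)^i(x_1-x_2)^{N-i}$ and discard the terms with $n+i\ge k$. The surviving terms then contain only powers of $(x_1-x_2)$.
\begin{itemize}
\item A factor $(x_1-x_2)^k$ lets you interchange $c(x_2)$ with $a(x_1)$.
\item Interchanging $c(x_2)$ with $b(x)$ needs $(x-x_2)^k$, by locality of $b$ and $c$. The leftover $(x_1-x_2)^{N-i-k}$ does not provide this: on the support of the delta functions, $x_1-x_2=(x-x_2)+x_0$, not $x-x_2$.
\item This cannot be avoided, since in $a(x_1)b(x)c(x_2)$ the operator $c(x_2)$ must pass $b(x)$ before it reaches $a(x_1)$.
\end{itemize}
So the claim that $N-i-k\ge k$ lets you move $c(x_2)$ past $b(x)$ is false as written.

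\textbf{Repair.} Reserve that factor before expanding: $(x-x_2)^N=(x-x_2)^k\sum_i\binom{N-k}{i}(x-x_1)^i(x_1-x_2)^{N-k-i}$.
\begin{itemize}
\item On each delta term, $(x-x_1)^i$ is exactly $(-x_0)^i$, with no extra unit factor. So the terms with $n+i\ge k$ vanish after the $x_0$-residue.
\item For $i<k-n$ you keep $(x-x_2)^k(x_1-x_2)^{N-k-i}$ with $N-k-i\ge k$, as soon as $N\ge 3k-n$.
\item In $a(x_1)b(x)c(x_2)$, move $c(x_2)$ past $b(x)$ and then past $a(x_1)$. In $b(x)a(x_1)c(x_2)$, move it past $a(x_1)$ and then past $b(x)$.
\item Undo the same expansion on the side of $c(x_2)(a(x)_nb(x))$.
\end{itemize}
Alternatively, keep your expansion but re-expand $(x_1-x_2)^{N-i-k}=\sum_j\binom{N-i-k}{j}(x_1-x)^j(x-x_2)^{N-i-k-j}$. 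Use $(x_1-x)^jD=x_0^jD$ and discard the terms with $n+i+j\ge k$. Your $N=4k-n$ then works.

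\textbf{Caution about the twist.} The factor $\bigl(\frac{x_1-x_0}{x}\bigr)^{r/T}$ contains $(1-x_0/x_1)^{r/T}$, an infinite series in positive powers of $x_0$. Its product with a single delta term, which is doubly infinite in $x_0$, produces infinite sums of scalars. It is only defined on the combinations $D$, $D\,c(x_2)$ and $c(x_2)\,D$, which are lower truncated in $x_0$ by Step 2. So every rearrangement must be done as an identity of these combinations, with $x_0^n\bigl(\frac{x_1-x_0}{x}\bigr)^{r/T}$ multiplied in last. Absorbing $x_1^{r/T}$ into $a(x_1)$ does not reduce the problem to the untwisted case, because $(1-x_0/x_1)^{r/T}$ survives.
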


Let $S\subset \E_T(W)$ be a local system on $W$. As in characteristic zero,
$S$ is {\em closed} in the sense that for any homogeneous $a(x),b(x)\in S$,
$a(x)_{n}b(x)\in S$ for all $n\in \BZ$.
For $a(x)\in S^r$ (with $r\in \BZ_T$) and $b(x)\in S$, define
\begin{equation}\label{eq:S-iterate}
\begin{split}
    &\?Y_S(a(x),x_0)b(x)=\sum_{n\in\BZ}a(x)_n b(x) x_0^{-n-1}\\
    &=\Res_{x_1}\bigg(\frac{x_1-x_0}{x}\bigg)^{\frac{r}{T}}\bigg(x_0^{-1}\delta\bigg(\frac{x_1-x}{x_0}\bigg)a(x_1)b(x)-(-1)^{|a(x)||b(x)|}x_0^{-1}\delta\bigg(\frac{x-x_1}{-x_0}\bigg)b(x)a(x_1)\bigg).
\end{split}
\end{equation}
Then $Y_{S}(\cdot,x_0)$ is a linear map from $S$ to $(\End S)[[x_0,x_0^{-1}]]$ such that
\begin{equation}
    Y_{S}(a(x),x_0)b(x)\in S((x_0))\ \ \ \mbox{ for }a(x),b(x)\in S.
\end{equation}
By a similar calculation as in \cite{LM20}, we have:

\begin{lemma}\label{th:S-conj-1}
Let $S$ be a local system in $\E_T(W)$.
For $a(z),b(z)\in S$, the following \emph{conjugation formula} holds:
\begin{equation}\label{eq:S-conj}
    e^{x\partial_{z}}Y_S(a(z),z_0)e^{-x\partial_{z}}b(z)=Y_S(e^{x\partial_z}a(z),z_0)b(z)
    =e^{x\partial_{z_0}}Y_S(a(z),z_0)b(z).
\end{equation}
\end{lemma}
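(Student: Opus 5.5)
We have to prove two equalities in \eqref{eq:S-conj}, working directly from the residue formula \eqref{eq:S-iterate}. Throughout, fix homogeneous $a(z)\in S^r$ and $b(z)\in S$ and write $\epsilon=(-1)^{|a||b|}$. Two facts about the translation operator will be used repeatedly. First, $e^{x\partial_z}$ acts on $\E_T(W)$ by formal substitution $f(z)\mapsto f(z+x)$, expanded in nonnegative powers of $x$; this is the translation property recalled in Section 2, applied coefficientwise on $W((z^{1/T}))$. Second, $e^{x\partial_z}$ preserves $\BZ_T$-degree and parity, so $e^{x\partial_z}a(z)\in S^r$ and the same exponent $r/T$ appears on both sides of each equality.

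\textbf{Second equality.} Here we compare $Y_S(a(z+x),z_0)b(z)$ with $e^{x\partial_{z_0}}Y_S(a(z),z_0)b(z)=Y_S(a(z),z_0+x)b(z)$. Write $Y_S(a(z),z_0)b(z)$ as a residue in $x_1$ of an expression in which $a$ appears only as $a(x_1)$. Replacing $a(x_1)$ by $a(x_1+x)$ and changing variables $x_1\mapsto x_1-x$ is legitimate for residues: for formal series in $x_1$ with coefficients in $\BF\{x\}$, $\Res_{x_1}f(x_1+x)=\Res_{x_1}f(x_1)$, as one checks on monomials $x_1^n$. This change of variables turns $x_1-z$ into $x_1-x-z$, and turns the prefactor $((x_1-z_0)/z)^{r/T}$ into $((x_1-x-z_0)/z)^{r/T}$. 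These are exactly the expressions obtained from the original formula by the substitution $z_0\mapsto z_0+x$, so the second equality follows.

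The main obstacle is justifying these substitutions in the presence of delta functions and fractional powers. Before substituting, I would use locality to multiply through by $(x_1-z)^k$, so that each term becomes a genuine series in a single expansion region. In practice I would follow \cite{LM20}: use the expression \eqref{eq:S-iterate} together with the standard delta-function identity that rewrites $x_0^{-1}\delta\bigl(\frac{x_1-x}{x_0}\bigr)$ as $x_1^{-1}\delta\bigl(\frac{x+x_0}{x_1}\bigr)$. With that rewriting, $e^{x\partial_{z_0}}$ acts on delta functions by the shift identity $e^{y\partial_{x_0}}x_0^{-1}\delta(\cdot/x_0)=\cdots$, and each step only uses the Hasse-derivative rules $\partial^{(m)}\partial^{(n)}=\binom{m+n}{n}\partial^{(m+n)}$, which hold in characteristic $p$.

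\textbf{First equality.} Apply $e^{x\partial_z}$ to $Y_S(a(z),z_0)b(z-x)$. Since the expression is a residue in $x_1$, the operator $e^{x\partial_z}$ moves inside the residue. In the fractional factor it shifts $z$ to $z+x$ and $x_1$ is untouched, but then a change of variables $x_1\mapsto x_1+x$ restores the form with $a(x_1+x)$ and $b(z)$. Because $a$ and $b$ are mutually local, $e^{x\partial_z}$ acts on the bracketed product as the simultaneous shift of $x_1$ and $z$. The delta functions depend only on differences, so they are invariant under this shift. The prefactor $((x_1-z_0)/z)^{r/T}$ does change; matching it with the right-hand side is the delicate point, and I would reduce it to the binomial identity $(z+x)^{-r/T}(x_1+x-z_0)^{r/T}$ via the Hasse-Taylor expansion. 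If this matching proves awkward, I would instead derive the first equality as a consequence of the second, via the ``$\D$-derivative'' property $Y_S(\partial^{(n)}a,z_0)=\partial_{z_0}^{(n)}Y_S(a,z_0)$ combined with a Leibniz-type expansion.
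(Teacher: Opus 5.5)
\textbf{The gap is in the first equality.} After applying $e^{x\partial_z}$ and shifting $x_1\mapsto x_1+x$, you correctly reduce to showing $\Res_{x_1}\bigl(\frac{x_1+x-z_0}{z+x}\bigr)^{r/T}\tilde C=\Res_{x_1}\bigl(\frac{x_1-z_0}{z}\bigr)^{r/T}\tilde C$. Here $\tilde C$ is the two-term delta combination containing $a(x_1+x)b(z)$. For $r\neq 0$ the two prefactors are different formal series, so no binomial or Hasse--Taylor identity can match them. They agree only when multiplied against $\tilde C$, and you give no argument for this.

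Your fallback does not close the gap. The second equality concerns only the first argument of $Y_S$. The first equality asserts that $\partial_z^{(m)}$ acts on every $a(z)_nb(z)$ by a Leibniz rule, and that involves the $z$-dependence of $z^{-r/T}$ and of the delta functions. Passing from one to the other would need something like skew symmetry. That is not available, because this lemma is an input to Theorem~\ref{VA and twisted modules from local system}.

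The missing step is short:
\begin{enumerate}[(i)]
\item The ratio of the prefactors is $F=\bigl(1+\frac{x}{x_1-z_0}\bigr)^{r/T}\bigl(1+\frac{x}{z}\bigr)^{-r/T}$.
\item The coefficient of $x^N$ in $F$ is a polynomial $P_N(u,v)$ in $u=(x_1-z_0)^{-1}$ and $v=z^{-1}$. By Chu--Vandermonde, $P_N(v,v)=\sum_{i+j=N}\binom{r/T}{i}\binom{-r/T}{j}v^N=\delta_{N,0}$.
\item Hence $F-1$ is a multiple of $u-v$, i.e.\ of $x_1-z_0-z$.
\item $x_1-z_0-z$ annihilates both $z_0^{-1}\delta\bigl(\frac{x_1-z}{z_0}\bigr)$ and $z_0^{-1}\delta\bigl(\frac{z-x_1}{-z_0}\bigr)$.
\item Coefficientwise in $x$, $\tilde C$ lies in $(\cdots)((z_0))$. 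So all these factors act through a commutative ring of power series in $z_0$ and $x$, and $F\bigl(\frac{x_1-z_0}{z}\bigr)^{r/T}\tilde C=\bigl(\frac{x_1-z_0}{z}\bigr)^{r/T}\tilde C$ holds even before taking the residue.
\end{enumerate}
Alternatively, use locality to derive the closed form $z_0^k(z+z_0)^{r/T}Y_S(a(z),z_0)b(z)=\bigl[x_1^{r/T}(x_1-z)^ka(x_1)b(z)\bigr]_{x_1=z+z_0}$. This is legitimate because $x_1^{r/T}a(x_1)$ has integral exponents. Both equalities can then be read off by substitution.

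\textbf{The second equality.} Your argument is the standard one and is correct in outline: under $x_1\mapsto x_1-x$ the prefactor and both delta functions transform exactly as under $z_0\mapsto z_0+x$. Three points need fixing.
\begin{enumerate}[(i)]
\item The prefactor times either delta term separately is not a well-defined series. The coefficient of a fixed monomial is an infinite sum coming from the expansion of $(x_1-z_0)^{r/T}$. So every manipulation must be done on the combination. That combination is lower truncated in $z_0$ for each power of $x$ because $\partial_z^{(n)}a(z)$ and $b(z)$ are mutually local (Lemma~\ref{th:local-partial}). This is the real role of locality; it plays no role in the shift itself, contrary to what you say in the first part.
\item After the shift, the locality factor for $a(x_1+x)$ and $b(z)$ is $(x_1+x-z)^k$, not $(x_1-z)^k$.
\item The delta identity you quote is misstated. $x_1^{-1}\delta\bigl(\frac{x+x_0}{x_1}\bigr)$, expanded in nonnegative powers of $x_0$, equals the difference $x_0^{-1}\delta\bigl(\frac{x_1-x}{x_0}\bigr)-x_0^{-1}\delta\bigl(\frac{x-x_1}{-x_0}\bigr)$. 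The single term $x_0^{-1}\delta\bigl(\frac{x_1-x}{x_0}\bigr)$ equals $x_1^{-1}\delta\bigl(\frac{x_0+x}{x_1}\bigr)$ expanded in nonnegative powers of $x$.
\end{enumerate}
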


Using the arguments of \cite{Li96T}, replacing Lemmas~3.3 and 3.11 therein
by Lemmas \ref{th:local-partial} and \ref{th:S-conj-1} above, we have:

\begin{theorem}\label{VA and twisted modules from local system}
Let $W$ be a $\BZ_2$-graded vector space.
Then every local subspace $S\subset\E_T(W)$
generates a vertex superalgebra $\langle S\rangle$ with an automorphism $g$ of order $T$.
Moreover, $W$ becomes a faithful $g$-twisted super $\langle S\rangle$-module, with
$Y_W(a(x),z)=a(z)$ for $a(x)\in \langle S\rangle$.
\end{theorem}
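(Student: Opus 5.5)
Let $S\subset\E_T(W)$ be local. By Zorn's lemma (the union of a chain of local subspaces is local), $S$ lies in a local system $A$. I will first show that $A$, with $Y_A$ from \eqref{eq:S-iterate} and vacuum $1_W$, is a vertex superalgebra. Then I take $\langle S\rangle$ to be the vertex subsuperalgebra of $A$ generated by $S$, that is, the smallest subspace containing $S\cup\{1_W\}$ and closed under all $n$-th products. Closedness of $A$, noted before the theorem and resting on the proposition that $a(x)_nb(x)$ is local with $c(x)$, guarantees $\langle S\rangle\subset A$. The $\BZ_2$-grading comes from the parity of $\End W$. The automorphism $g$ is the one restricted from $\E_T(W)$, i.e. $x^{1/T}\mapsto\eta^{-1}x^{1/T}$. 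A direct computation with the defining residue shows $g(a(x)_nb(x))=(ga(x))_n(gb(x))$ and that $\BZ_T$-degrees add, so $g$ is an automorphism of order dividing $T$. Its order is exactly $T$ once nonzero components of every degree occur; otherwise I read "order $T$" as "order dividing $T$", as in \cite{Li96T}.

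For the vertex superalgebra axioms I will use the equivalent set "weak supercommutativity plus conjugation formula" from the Remark after the skew-symmetry lemma.
\begin{enumerate}[(i)]
\item Vacuum: $Y_A(1_W,x_0)=1$ is immediate from the definition.
\item Creation: for $a\in A^r$, in $a(x)_n1_W$ the second term of the residue drops, since $a(x_1)$ and $1_W$ commute. Using $a(x_1)x_0^{-1}\delta\bigl(\frac{x_1-x}{x_0}\bigr)$ together with the factor $\bigl(\frac{x_1-x_0}{x}\bigr)^{r/T}$, the residue gives $Y_A(a,x_0)1_W=a(x+x_0)$ expanded in nonnegative powers of $x_0$. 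So it lies in $A[[x_0]]$ with constant term $a(x)$. Here the truncation $a(x)w\in W((x^{1/T}))$ is what makes the expansion legitimate.
\item Conjugation formula: I take $\D^{(n)}=\partial_x^{(n)}$, so that $\D^{(n)}a=a_{-n-1}1_W$ by (ii), and Lemma~\ref{th:S-conj-1} gives exactly the required formula.
\item Weak supercommutativity on $A$: I follow \cite{Li96T}. Write $Y_A(a,x_1)Y_A(b,x_2)c$ through iterated residues of products $a(z_1)b(z_2)c(x)$. The locality exponents among $a,b,c$ then give $k$ with $(x_1-x_2)^kY_A(a,x_1)Y_A(b,x_2)=\pm(x_1-x_2)^kY_A(b,x_2)Y_A(a,x_1)$, where the super sign comes from reordering the operators on $W$.
\end{enumerate}

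For the module structure, define $Y_W(a(x),z)=a(z)$. It has the required $\BZ_T$-graded expansion by the definition of $\E_T(W)^r$, it is truncated, $Y_W(1_W,z)=1_W$, and weak supercommutativity on $W$ is exactly the locality of $A$. By the Remark replacing the twisted Jacobi identity with supercommutativity plus twisted associativity \eqref{associativity}, it remains to prove twisted weak associativity. For this I substitute \eqref{eq:S-iterate}, which reads $Y_W(Y_A(a,x_0)b,x_2)=\Res_{x_1}(\cdots)$, multiply by $(x_0+x_2)^{k+r/T}$ with $k$ chosen so that $x^{k+r/T}a(x)w$ is regular, and use locality to kill the second delta-term after multiplying by $w$. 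The key point, as in \cite{Li96T}, is that $\bigl(\frac{x_1-x_0}{x}\bigr)^{r/T}$ combined with the delta function reproduces $(x_0+x_2)^{k+r/T}a(x_0+x_2)b(x_2)w$. This works because $r/T\in\BD$ when $p\nmid T$, so the binomial expansions \eqref{binomial expansion} and delta-function identities hold over $\BF$. Faithfulness is automatic, since $Y_W(a,z)=a(z)$ and $a=0$ whenever $a(z)=0$. The $\BZ_2$-grading of $W$ is respected by construction, so $W$ is a twisted super module.

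The main obstacle will be step (iv) together with twisted associativity. The characteristic-zero proofs use manipulations with $e^{x_0\partial}$ and division by factorials; I expect to replace these by Hasse derivatives and the pseudo-exponential, checking that each binomial coefficient $\binom{r/T}{i}$ and $\binom{m}{n}$ lies in $\BD$. Lemma~\ref{th:local-partial} supplies locality of $\partial^{(n)}$-derivatives, which is needed in the weak associativity step.
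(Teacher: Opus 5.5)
Your proposal is essentially the paper's proof. The paper runs Li's argument from \cite{Li96T}, with Lemma~\ref{th:local-partial} and the conjugation formula of Lemma~\ref{th:S-conj-1} replacing the characteristic-zero derivative lemmas, and that is how you organize it: maximal local system and closure, creation, conjugation formula plus weak supercommutativity, then twisted associativity on $W$.

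One slip needs fixing in your creation step. The second delta term does not drop out. Because $1_W$ commutes with $a(x_1)$ with no sign, the two terms combine via $x_0^{-1}\delta\bigl(\frac{x_1-x}{x_0}\bigr)-x_0^{-1}\delta\bigl(\frac{x-x_1}{-x_0}\bigr)=x^{-1}\delta\bigl(\frac{x_1-x_0}{x}\bigr)$. Only this combination, rewritten as $x_1^{-1}\delta\bigl(\frac{x+x_0}{x_1}\bigr)\bigl(\frac{x+x_0}{x_1}\bigr)^{-r/T}$, gives $Y_A(a,x_0)1_W=a(x+x_0)\in A[[x_0]]$. The first term alone is not even a well-defined series when $r\neq0$. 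For $r=0$ it gives $a(x_0+x)$ expanded in nonnegative powers of $x$, which has negative powers of $x_0$.
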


\section{Twisted Zhu algebras}

In this section, we study the twisted Zhu algebra $A_g(V)$ for the
modular vertex superalgebra $V$. We give a bijection between the set
of simple modules for the twisted Zhu algebra and the set of simple
$\frac{1}{T_0}\BN$-graded twisted $V$-modules.

\subsection{The twisted associative algebra $A_g(V)$ }

Let $\Aut(V)$ denote the group of automorphisms of the vertex
superalgebra $V=V_{\bar 0}\oplus V_{\bar 1}$.
Since any automorphism of $V$ preserves its $\BZ_2$-grading,
it is easy to see that $\sigma$ lies in the center of $\Aut(V)$.

Fix an automorphism $g\in \Aut(V)$ of order $T$,
and let $T_0$ be the order of the product $g\sigma$.
We decompose $V$ into eigenspaces with respect
to the actions of $g$ and $g\sigma$ respectively as follows:
\begin{align}
    V&=\bigoplus_{r\in \BZ/T\BZ}V^{r}, \label{decomp g}\\
	V&=\bigoplus_{r\in \BZ/T_0\BZ}V^{r*},
\end{align}
where $V^{r}=\{v\in V\mid gv=\eta^r v\}$ and
$V^{r*}=\{v\in V\mid g\sigma (v)=\varepsilon^r v\}$, with $\eta$ (resp. $\varepsilon$) a
primitive $T$ (resp. $T_0$)-th root of unity in $\mathbb{F}$.
Note that $V^{0^*}$ is a vertex subsuperalgebra of $V$.

For a $\frac{1}{2}\BZ$-homogeneous element $u \in V$ and a nonnegative integer $n$, we define
\begin{equation}\label{def circle}
	u\circ_g^n v=\begin{cases}
		\Res_{z}\frac{(1+z)^{\deg u}}{z^{2+n}}Y(u,z)v, & u \in V^{0*},\\
		\Res_{z}\frac{(1+z)^{\deg u-1+\frac{r}{T_0}}}{z^{1+n}}Y(u,z)v, & u \in V^{r*}, r \neq 0,
	\end{cases}
\end{equation}
and
\begin{equation}
	u*_gv=\begin{cases}
		\Res_{z}\frac{(1+z)^{\deg u}}{z}Y(u,z)v, & u \in V^{0*},\\
		0, & u \in V^{r*}, r \neq 0.
	\end{cases}
\end{equation}
Extend both $\circ_g^n$ and $*_{g}$ linearly on $V$. Set
\begin{equation*}
    O_g(V) = \Span\big\{a\circ_g^n b\mid a,b\in V,n\in\BN\}
\end{equation*}
and define the linear super space
\begin{equation*}
    A_g(V)=V/O_g(V),
\end{equation*}
where the parity is induced from the parity of $V$.

We use the notations $A(V)$, $O(V)$, $u\circ^n v$ and $u*v$ when $g=1_V$.
This recovers the associative algebra defined in \cite{LM21}.
To establish the algebra structure on $A_g(V)$, we begin with several lemmas.
Most proofs follow the known arguments in the literature,
with only minor modifications (see \cite{DLM,DZ,JLM,LM20,LM21,LM22,DW,Yang}).

Just as Lemma 3.3 in \cite{LM21}, we have:

\begin{lemma}\label{m>n in O}
Assume that $u\in V^{r*}$ is a $\frac{1}{2}\BZ$-homogeneous element.
Then for any $m,n \in \BN$ with $n \geq m$ and any $v \in V$,
\begin{align}
	&\Res_{z}Y(u, z)v\frac{(1+z)^{\deg u +m}}{z^{2+n}} \in O_g(V) , \text{ for }u \in V^{0*},\\
	&\Res_{z}Y(u, z)v\frac{(1+z)^{\deg u- 1 + \frac{r}{T_0}+m}}{z^{1+n}}\in O_g(V),\text{ for } u\in V^{r*}(r\neq0).
\end{align}
\end{lemma}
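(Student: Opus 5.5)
The plan is to expand $(1+z)^{\deg u+m}$ binomially and use induction on $m$, treating the two cases of the statement in parallel. Write $\alpha=\deg u$ when $u\in V^{0*}$, and $\alpha=\deg u-1+\frac{r}{T_0}$ when $r\neq 0$. Let $\epsilon=2$ in the first case and $\epsilon=1$ in the second. With this notation the claim becomes
\[
R(m,n):=\Res_z Y(u,z)v\,\frac{(1+z)^{\alpha+m}}{z^{\epsilon+n}}\in O_g(V)\quad\text{for } n\ge m\ge 0 .
\]
The base case $m=0$ holds by definition: $R(0,n)=u\circ_g^n v\in O_g(V)$ for every $n\in\BN$.

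For the inductive step, assume $R(m,n')\in O_g(V)$ whenever $n'\ge m$, and let $n\ge m+1$. Using the identity
\[
\frac{(1+z)^{\alpha+m+1}}{z^{\epsilon+n}}=\frac{(1+z)^{\alpha+m}}{z^{\epsilon+n}}+\frac{(1+z)^{\alpha+m}}{z^{\epsilon+n-1}},
\]
which is valid as formal series because $(1+z)^{\alpha+m+1}=(1+z)(1+z)^{\alpha+m}$ in $\BF[[z]]$ for $\alpha\in\BD$, we get
\[
R(m+1,n)=R(m,n)+R(m,n-1).
\]
Since $n\ge m+1$, both $n$ and $n-1$ are at least $m$. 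The induction hypothesis then puts both terms in $O_g(V)$, and linearity of $\Res_z$ completes the step.

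Some care is needed with the expansion conventions. The factor $(1+z)^{\beta}$ is understood via \eqref{binomial expansion} as $\sum_i\binom{\beta}{i}z^i$, with coefficients in $\BF$ obtained through $\pi$. The multiplicativity $(1+z)^{\beta}(1+z)=(1+z)^{\beta+1}$ follows from Pascal's rule $\binom{\beta}{i}+\binom{\beta}{i-1}=\binom{\beta+1}{i}$. This identity holds in $\BD$, and therefore also after applying $\pi$. Because the second exponent involves $\frac{r}{T_0}$, we need $\frac{r}{T_0}\in\BD$, and this requires $p\nmid T_0$. That holds because $T_0$ divides $2T$, $p>2$, and $p\nmid T$.

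I expect no step to be a real obstacle. The only point to check is that Pascal's identity, and hence multiplicativity of the binomial series, passes to characteristic $p$. It does, since $\pi:\BD\to\BF$ is a ring homomorphism.
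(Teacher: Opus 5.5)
Your proof is correct and is essentially the standard argument. The paper gives no separate proof and defers to Lemma 3.3 of \cite{LM21}, which likewise rewrites $\Res_z Y(u,z)v\,(1+z)^{\alpha+m}z^{-\epsilon-n}$ as $\sum_{i=0}^{m}\binom{m}{i}\,u\circ_g^{n-i}v$ using $(1+z)^{\alpha+m}=(1+z)^{\alpha}(1+z)^{m}$; your Pascal-rule induction is this expansion carried out one step at a time, and the base case is immediate here because $O_g(V)$ is defined with $u\circ_g^n v$ for all $n\in\BN$.
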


The following lemma is parallel to Lemma 2.1 in \cite{DLM}.

\begin{lemma}\label{lem:Vr in O}
If $r\neq 0$, then $V^{r*}\subset O_g(V)$.
\end{lemma}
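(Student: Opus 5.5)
The plan follows the proof of Lemma 2.1 in \cite{DLM}: show that every $\frac12\BZ$-homogeneous $u\in V^{r*}$ with $r\neq 0$ is congruent modulo $O_g(V)$ to a nonzero scalar multiple of itself minus that same multiple, so that $u\in O_g(V)$.

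The natural candidate is $u\circ_g^n\1$ with $n=0$. By definition,
\begin{equation*}
u\circ_g^0\1=\Res_z\frac{(1+z)^{\deg u-1+\frac{r}{T_0}}}{z}\,Y(u,z)\1\in O_g(V).
\end{equation*}
The creation property gives $Y(u,z)\1=e^{z\D}u=\sum_{k\ge0}z^k\D^{(k)}u$, which lies in $V[[z]]$. The factor $(1+z)^{\alpha}$, with $\alpha=\deg u-1+\frac{r}{T_0}\in\BD$, is also a power series in $z$ with constant term $1$; here $\deg u\in\frac12\BZ$, $p>2$ and $p\nmid T_0$. Taking the residue of $z^{-1}$ times this product therefore extracts only the constant term, and we get $u\circ_g^0\1=u$. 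Hence $u\in O_g(V)$.

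Since $V^{r*}$ is spanned by $\frac12\BZ$-homogeneous elements, this proves the lemma. For that last point we use that $g\sigma$ commutes with the grading operator; this is implicit in the setup, since the definition of $\circ_g^n$ requires $u$ to be both homogeneous and in $V^{r*}$. If one does not want to assume that $g$ preserves the grading, one needs a remark that each $V^{r*}$ decomposes into homogeneous components. I expect that justification to be the only nontrivial point.

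Everything else is immediate once the exponent of $(1+z)$ is in $\BD$. That is exactly where the normalization by $\frac{r}{T_0}$ and the coprimality hypotheses matter: they make the binomial expansion \eqref{binomial expansion} legitimate over $\BF$.
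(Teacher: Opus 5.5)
Your proof is correct and is exactly the paper's argument: for $\frac12\BZ$-homogeneous $u\in V^{r*}$ with $r\neq 0$, compute $u\circ_g^0\1$. The creation property (equivalently, $u_{i-1}\1=0$ for $i\geq 1$) leaves only the constant term, so $u\circ_g^0\1=u\in O_g(V)$.

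Your side remarks are accurate, and they are assumptions the paper also makes tacitly. The exponent lies in $\BD$ because $p\nmid T_0$. One also needs $V^{r*}$ to be spanned by $\frac12\BZ$-homogeneous vectors, which holds when $g$ preserves the grading. Your opening sentence about a ``nonzero scalar multiple minus that same multiple'' is muddled, but it plays no role in the argument.
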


\begin{proof}
Take a $\frac{1}{2}\BZ$-homogeneous $u\in V^{r*}$ $(r\neq0)$ and set $n=0$. Then
\begin{align*}
	u\circ_g^0 \1 &=\Res_{z}\frac{(1+z)^{\deg u- 1 + \frac{r}{T_0}}}{z}Y(u, z)\1 \\
	&= \sum_{i\geq 0}\binom{\deg u- 1 + \frac{r}{T_0}}{i}u_{i-1}\1=u \in O_g(V). \qedhere
\end{align*}
\end{proof}

\begin{remark}
Set $I=O_g(V)\cap V^{0*}$. By Lemma~\ref{lem:Vr in O}, 
we have $A_g(V)\cong V^{0*}/I$ as linear spaces.
Denote by $O(V^{0*})$ the subspace
spanned by $a \circ^n b$, for $a,b \in V^{0*}$.
Since $O(V^{0*})\subset I$, $A_g(V)$ is a quotient of $A(V^{0*})$.
\end{remark}

Just as Lemma 2.9 in \cite{JLM} (with slight difference in the proof), we have:

\begin{lemma}\label{Dk cong}
Let $u\in V$ be a $\frac{1}{2}\BZ$-homogeneous vector. Then
\begin{equation}\label{Du congruence}
    \D^{(n)}u\equiv \binom{-\deg u}{n} u \pmod {O_g(V)}, \mbox{ for } n\in \BN.
\end{equation}
\end{lemma}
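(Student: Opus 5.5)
We argue by induction on $n$, after splitting according to the $g\sigma$-eigenspace decomposition of $V$. Write $d=\deg u\in\frac12\BZ\subset\BD$; since $p>2$, all binomial coefficients in $d$ make sense in $\BF$ via $\pi$.

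\emph{Twisted components.} The operators $\D^{(n)}$ are defined by $\D^{(n)}v=v_{-n-1}\1$, and automorphisms preserve $\1$ and the vertex operators. Hence $\D^{(n)}$ commutes with $g\sigma$ and preserves each $V^{r*}$. If $u\in V^{r*}$ with $r\neq 0$, then both $u$ and $\D^{(n)}u$ lie in $V^{r*}$. By Lemma~\ref{lem:Vr in O} both lie in $O_g(V)$, so \eqref{Du congruence} holds trivially. We may therefore assume $u\in V^{0*}$.

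\emph{Case $u\in V^{0*}$.} We use the elements $u\circ_g^n\1\in O_g(V)$, exactly as in the proof of Lemma~\ref{lem:Vr in O}. The creation property gives $Y(u,z)\1=e^{z\D}u=\sum_{k\ge0}z^k\D^{(k)}u$. Expanding $(1+z)^{d}$ by \eqref{binomial expansion} and taking the residue, we get for every $n\in\BN$
\begin{equation*}
u\circ_g^n\1=\Res_z\frac{(1+z)^{d}}{z^{2+n}}\sum_{k\ge0}z^k\D^{(k)}u=\sum_{i=0}^{n+1}\binom{d}{i}\D^{(n+1-i)}u\in O_g(V).
\end{equation*}
The case $n=0$ of \eqref{Du congruence} is trivial. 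Assume \eqref{Du congruence} holds for $\D^{(0)}u,\dots,\D^{(n)}u$. Substituting into the relation above yields
\begin{equation*}
\D^{(n+1)}u\equiv-\sum_{i=1}^{n+1}\binom{d}{i}\binom{-d}{n+1-i}\,u \pmod{O_g(V)}.
\end{equation*}

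\emph{Vandermonde step.} To conclude, we need the identity
\begin{equation*}
\sum_{i=0}^{m}\binom{d}{i}\binom{-d}{m-i}=\binom{0}{m}=0 \qquad\text{for } m\ge1.
\end{equation*}
This is a polynomial identity in $d$ over $\BQ$, and both sides lie in $\BD$. It therefore holds after applying the ring homomorphism $\pi$, and hence in $\BF$. With $m=n+1$, the right-hand side of the congruence above equals $\binom{-d}{n+1}u$, which completes the induction.

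The only point needing care is this passage to characteristic $p$. It works because every coefficient involved lies in $\BD$ (using $p\ne 2$ for half-integral $d$), and Vandermonde's identity is an identity in $\BD$ before reduction. We never divide by $n!$, which is the "slight difference" from the characteristic-zero argument via $L_{-1}+L_0$.
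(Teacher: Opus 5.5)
Your proof is correct and follows the paper's argument: for $u\in V^{r*}$ with $r\neq 0$ the paper likewise notes that $\D^{(n)}u=u_{-n-1}\1\in V^{r*}\subset O_g(V)$, so both sides vanish modulo $O_g(V)$. For $u\in V^{0*}$ the paper defers to the proof of Lemma 2.9 in \cite{JLM}, which is your induction on the relations $u\circ_g^n\1=\sum_{i=0}^{n+1}\binom{\deg u}{i}\D^{(n+1-i)}u\in O_g(V)$, closed off by the Chu--Vandermonde identity $\sum_{i=0}^{m}\binom{\deg u}{i}\binom{-\deg u}{m-i}=0$ for $m\geq 1$.
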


\begin{proof}
If $u\in V^{0*}$, the proof is identical to that of Lemma 2.9 in \cite{JLM}
where the Chu-Vandermonde Identity 
\begin{equation*}
    \sum_{i=0}^k\binom {\deg u}{i}\binom{-\deg u}{k-i}=0
\end{equation*}
is used. 

If $u\in V^{r*}$, $r\neq 0$, then
\begin{equation*}
    \D^{(n)}u=u_{-n-1}\1\in V^{r*}\subset O_g(V)
\end{equation*}
by Lemma \ref{lem:Vr in O}.
Thus $\D^{(n)}u\equiv 0\equiv \binom{-\deg u}{n} u \pmod {O_g(V)}$.
\end{proof}

The following result will be used to establish the associative algebra structure on $A_g(V)$.

\begin{lemma}\label{zhu algebra equiv}
Assume $u,v \in V^{ 0*}$ are homogeneous. Then
\begin{enumerate}[(i)]
\item $u*_gv-(-1)^{|u||v|}\Res_z\frac{(1+z)^{\deg v-1}}{z}Y(v,z)u\in O(V^{0*})$;

\item $u*_gv-(-1)^{|u||v|}v*_gu-\Res_z(1+z)^{\deg u-1}Y(u,z)v\in O(V^{0*})$.
\end{enumerate}
\end{lemma}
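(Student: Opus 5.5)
We use skew symmetry \eqref{skew symmetry} to rewrite $Y(u,z)v$ in terms of $Y(v,-z)u$, then absorb the translation $e^{z\D}$ into a shift of the expansion variable. This is the standard argument of Zhu (and of Lemma 2.2 in \cite{LM21}); the only modular adjustment is that $\D$ is replaced by the divided powers $\D^{(n)}$, and we control $e^{z\D}$ through the residue substitution rather than through derivatives. Since $u,v\in V^{0*}$ and $V^{0*}$ is a vertex subsuperalgebra, every element produced below lies in $V^{0*}$. Hence only the first case of \eqref{def circle} occurs, and membership in $O(V^{0*})$ is meaningful.

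For (i), start from
\[
u*_gv=\Res_z\frac{(1+z)^{\deg u}}{z}Y(u,z)v=(-1)^{|u||v|}\Res_z\frac{(1+z)^{\deg u}}{z}e^{z\D}Y(v,-z)u .
\]
Write $Y(v,-z)u=\sum_n v_n u(-z)^{-n-1}$, and note that $\deg(v_nu)=\deg u+\deg v-n-1$ by the grading in Definition \ref{dgradedva}. Expand $e^{z\D}=\sum_k z^k\D^{(k)}$. The key formal identity we aim for is
\[
\Res_z\frac{(1+z)^{\deg u}}{z}e^{z\D}Y(v,-z)u \equiv \Res_z\frac{(1+z)^{\deg v-1}}{z}Y(v,z)u \pmod{O(V^{0*})}.
\]
We would prove this by the change of variables $z\mapsto -z/(1+z)$, i.e.\ $w=-z/(1+z)$, so that $1+w=(1+z)^{-1}$ and $z=-w/(1+w)$. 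Under this change, $(1+z)^{\deg u}z^{-1}$ becomes $(1+w)^{-\deg u}\cdot(-(1+w)/w)$. The Jacobian is $dz=-(1+w)^{-2}dw$, which is legitimate for residues of Laurent series in $\BF[[z]][z^{-1}]$ over any characteristic because substitution commutes with $\Res$ up to the formal derivative, and that derivative involves no division. The main difficulty is the factor $e^{z\D}$. We replace it termwise by Lemma \ref{Dk cong} applied inside $O_g(V)\cap V^{0*}$ rather than $O(V^{0*})$, since Lemma \ref{Dk cong} as stated is mod $O_g(V)$. For the untwisted $V^{0*}$ the same proof gives the congruence mod $O(V^{0*})$, using the Chu--Vandermonde identity. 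This gives $\D^{(k)}(v_nu)\equiv\binom{-\deg(v_nu)}{k}v_nu$. Summing over $k$ then yields $e^{z\D}v_nu\equiv (1+z)^{-\deg(v_nu)}v_nu=(1+z)^{-\deg u-\deg v+n+1}v_nu$, valid coefficientwise modulo $O(V^{0*})$. After this replacement the left side becomes
\[
\sum_n \Res_z (1+z)^{-\deg v+n+1}(-z)^{-n-1}z^{-1}v_nu ,
\]
a purely scalar residue. Comparing coefficients of $v_nu$ on both sides reduces the claim to the binomial identity
\[
\Res_z (1+z)^{-\deg v+n+1}(-1)^{n+1}z^{-n-2}=\binom{\deg v-1}{n}
\]
for $n\ge 0$. (Negative $n$ only contributes where $\Res$ vanishes, or else gives terms in $O(V^{0*})$ by Lemma \ref{m>n in O}.) We verify this identity using $\binom{-a}{k}=(-1)^k\binom{a+k-1}{k}$, valid in $\BD$ and hence in $\BF$. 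We expect this coefficient bookkeeping, together with the justification that the $e^{z\D}$ replacement is legitimate coefficientwise, to be the main obstacle.

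For (ii), subtract $(-1)^{|u||v|}v*_gu=(-1)^{|u||v|}\Res_z\frac{(1+z)^{\deg v}}{z}Y(v,z)u$ from (i). Modulo $O(V^{0*})$ this gives
\[
u*_gv-(-1)^{|u||v|}v*_gu\equiv -(-1)^{|u||v|}\Res_z (1+z)^{\deg v-1}Y(v,z)u .
\]
Applying the same skew-symmetry and $e^{z\D}$-replacement argument to $\Res_z(1+z)^{\deg v-1}Y(v,z)u$, with a power $z^0$ in place of $z^{-1}$, turns it into $-(-1)^{|u||v|}\Res_z(1+z)^{\deg u-1}Y(u,z)v$ modulo $O(V^{0*})$. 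This again rests on the analogous binomial identity $\Res_z(1+z)^{-\deg u+n}(-1)^{n+1}z^{-n-1}=-\binom{\deg u-1}{n}$. Combining the two steps gives (ii).
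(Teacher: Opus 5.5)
Your argument is correct and is essentially the paper's route: the paper defers to Lemma 2.6 of \cite{LM22}, which is exactly skew symmetry plus the congruence $\D^{(k)}c\equiv\binom{-\deg c}{k}c$, followed by a binomial comparison. You correctly take that congruence modulo $O(V^{0*})$ rather than $O_g(V)$; the proof of Lemma~\ref{Dk cong} gives this, since it only uses $c\circ^m\1$ with $c,\1\in V^{0*}$. The coefficientwise replacement of $e^{z\D}$ is legitimate because the residue involves only finitely many $\D^{(k)}(v_nu)$, with $k\le n+1$. So the change of variables and its Jacobian are not needed at all.

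There is one indexing slip in (i) that you should fix. The right-hand side is $\sum_{i\ge0}\binom{\deg v-1}{i}v_{i-1}u$. Hence the coefficient of $v_nu$ is $\binom{\deg v-1}{n+1}$ for every $n\ge -1$. The term $n=-1$ does contribute, namely $v_{-1}u$ with coefficient $1$ on both sides. Terms with $n\le -2$ simply have zero residue, and Lemma~\ref{m>n in O} plays no role.

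The identity to verify is therefore $(-1)^{n+1}\binom{n+1-\deg v}{n+1}=\binom{\deg v-1}{n+1}$, which follows from the same negation formula. Your stated version with $\binom{\deg v-1}{n}$ already fails at $n=0$, where the left side equals $\deg v-1$. Part (ii), including the sign bookkeeping and the identity $(-1)^{n+1}\binom{n-\deg u}{n}=-\binom{\deg u-1}{n}$, is correct as written.
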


\begin{proof}
The proof is the same as the proof of Lemma 2.6 in \cite{LM22}.
\end{proof}

We can now show that $A_g(V)$ is an associative algebra.

\begin{theorem}
The subspace $O_g(V)$ is a two-sided ideal of the non-associative algebra $(V,*_g)$,
and the product $*_g$ induces an associative algebra structure on
$A_g(V)=V/O_g(V)$ with the identity element $\1+O_g(V)$.
\end{theorem}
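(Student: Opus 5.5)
The plan is to reduce to the untwisted algebra $A(V^{0*})$ and then treat the part $V^{r*}$, $r\neq 0$, separately. By Lemma~\ref{lem:Vr in O}, $V^{r*}\subset O_g(V)$ for $r\neq 0$, and by definition $u*_g v=0$ for such $u$. So it suffices to check four things:
\begin{enumerate}[(a)]
\item $O_g(V)*_g V\subset O_g(V)$ and $V*_g O_g(V)\subset O_g(V)$;
\item associativity modulo $O_g(V)$ for $u,v,w\in V^{0*}$;
\item $\1$ is a two-sided identity modulo $O_g(V)$;
\item products involving a factor from some $V^{r*}$ with $r\neq 0$ land in $O_g(V)$.
\end{enumerate}
For $u\in V^{0*}$ and $v\in V^{s*}$, the grading by $g\sigma$-eigenvalues gives $u_nv\in V^{s*}$. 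Hence $u*_gv\in V^{s*}$, which lies in $O_g(V)$ whenever $s\neq0$. This settles (d) for the left factor in $V^{0*}$. For the left factor in $V^{r*}$ with $r\neq 0$, the product is $0$ by definition.

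For the right ideal property with $u\in V^{0*}$, I will follow the characteristic-zero argument of \cite{DLM,DZ}, as adapted to characteristic $p$ in \cite{LM21,JLM}. The target is
\[
u*_g(a\circ_g^n b)\in O_g(V),\qquad (a\circ_g^n b)*_g u\in O_g(V).
\]
When $a\in V^{0*}$, this is the untwisted computation of \cite{LM21}. One writes $u*_g(a\circ^n_g b)-(-1)^{|u||a|}a\circ^n_g(u*_gb)$ via the commutator formula \eqref{commutator formula} as $\Res_{z_2}\Res_{z_0}$ of $Y(Y(u,z_0)a,z_2)b$ times $(1+z_0+z_2)^{\deg u}(1+z_2)^{\deg a}/(z_0+z_2)z_2^{2+n}$. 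Expanding $(1+z_2+z_0)^{\deg u}=\sum_i\binom{\deg u}{i}(1+z_2)^{\deg u-i}z_0^i$ in the \emph{correct} binomial sense, which is legitimate because only integral or half-integral $\BD$-exponents occur, expresses the difference as a combination of the elements in Lemma~\ref{m>n in O}.

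When $a\in V^{r*}$ with $r\neq0$, the exponent of $(1+z)$ is $\deg a-1+r/T_0$. The same manipulation works, with $(1+z_2)^{\deg a-1+r/T_0}$ carried along, and Lemma~\ref{m>n in O} (second case) absorbs the resulting terms. Alternatively, since $a\circ_g^n b\in V^{(r+s)*}$, the case $r+s\neq 0$ is trivial. For the case $r+s=0$, I will express $u*_g(a\circ^n_g b)$ through the iterate formula and show the leftover is of the form in Lemma~\ref{m>n in O}.

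The left ideal property, $(a\circ_g^n b)*_g u\in O_g(V)$, I will derive from the right one via Lemma~\ref{zhu algebra equiv}(i), which rewrites $x*_g u$ modulo $O(V^{0*})$ as $(-1)^{|x||u|}\Res_z\frac{(1+z)^{\deg u-1}}{z}Y(u,z)x$. Then I need to show that $\Res_z\frac{(1+z)^{\deg u-1}}{z}Y(u,z)$ preserves $O_g(V)$. This is the same residue computation with the exponent shifted by one, and Lemma~\ref{m>n in O} (with $m=0$ absorbing the shift) handles it.

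For associativity with $u,v,w\in V^{0*}$, I will show
\[
(u*_gv)*_gw-u*_g(v*_gw)=\Res_{z_1}\Res_{z_2}Y(u,z_1)Y(v,z_2)w\,F(z_1,z_2).
\]
Here the right-hand side is exhibited as a combination of elements $u\circ_g^n(\cdot)$, using weak associativity and the binomial identity
\[
\frac{(1+z_0+z_2)^{\deg u}}{z_0+z_2}-\frac{(1+z_2+z_0)^{\deg u}}{z_2+z_0},
\]
expanded with Hasse-derivative delta functions; this is exactly as in \cite{LM21}. For the unit, $\1*_gu=u$ follows from $Y(\1,z)=1$. For $u*_g\1$, note that $u*_g\1=\sum_i\binom{\deg u}{i}\D^{(i)}u\equiv\sum_i\binom{\deg u}{i}\binom{-\deg u}{i}\cdots$. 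More precisely, the residue picks out $\sum_i\binom{\deg u}{i}u_{i-1}\1=\sum_i\binom{\deg u}{i}\D^{(-i)}$, which only contributes $i=0$, giving $u$. For $u\in V^{r*}$ with $r\neq0$, both sides are $0$ mod $O_g(V)$.

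The main obstacle is the ideal computation for $a\in V^{r*}$, $r\neq0$, in characteristic $p$. There the fractional exponents $r/T_0$ must be handled only through $\BD$-binomials, because $T_0\mid 2T$ is prime to $p$. Any step dividing by $n+1$ or similar, as done in \cite{DLM}, must be replaced by Hasse-derivative identities. I would first try to avoid this by the $V^{(r+s)*}$ reduction above, so that only the case $r+s=0$ needs a genuine check.
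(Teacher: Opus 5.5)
Your proposal is correct, and most of it follows the paper. The shared parts are: the reduction to $V^{0*}$ via Lemma~\ref{lem:Vr in O} and the $g\sigma$-eigenspace argument; the treatment of $u*_g(a\circ_g^n b)$ by subtracting $(-1)^{|u||a|}a\circ_g^n(u*_g b)$ and expanding $\frac{(1+z_2+z_0)^{\deg u}}{z_2+z_0}$ so that every term falls under Lemma~\ref{m>n in O}; and citing the untwisted arguments for associativity and the unit.

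The one genuinely different step is showing $(a\circ_g^n b)*_g u\in O_g(V)$ for $a\in V^{r*}$, $b\in V^{(T_0-r)*}$, $r\neq0$.

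\textbf{The paper's route.} It proves this directly. It rewrites $(a\circ_g^n b)*_g u$ as $\Res_{z_2}\Res_{z_0}$ of $Y(Y(a,z_0)b,z_2)u$. The Jacobi identity then gives two terms with kernel $(1+z_1)^{\deg a-1+r/T_0}(1+z_2)^{\deg b+n+1-r/T_0}(z_1-z_2)^{-1-n}z_2^{-1}$. The first term is a sum of elements $a\circ_g^{n+k}(\cdot)$. The second term falls under Lemma~\ref{m>n in O} for $b$, because $\deg b+n+1-\frac{r}{T_0}=(\deg b-1+\frac{T_0-r}{T_0})+(n+1)$.

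\textbf{Your route.} You use Lemma~\ref{zhu algebra equiv}(i) to replace right multiplication by $u$ with the operator $\Res_z\frac{(1+z)^{\deg u-1}}{z}Y(u,z)$, and then rerun the commutator computation. This works. The $(i,j)$ term is $\Res_{z_2}Y(u_{i+j}a,z_2)b\,(1+z_2)^{\deg(u_{i+j}a)-1+r/T_0+j}z_2^{-2-n-j}$. This is covered by Lemma~\ref{m>n in O} with $m=j\le n+j+1$, not uniformly ``$m=0$'' as you wrote.

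\textbf{Comparison.} Your route turns both halves of the ideal property into one type of computation, in which only $a$ carries a twisted exponent. However, it depends on Lemma~\ref{zhu algebra equiv}(i), whose characteristic-$p$ proof (skew symmetry, Lemma~\ref{Dk cong}, and a change of variables) the paper only cites. The paper's route needs nothing beyond the Jacobi identity and Lemma~\ref{m>n in O}. Its cost is a separate computation that relies on the twisted exponents of $a$ and $b$ being complementary.

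\textbf{Minor slips.}
\begin{enumerate}[(i)]
\item The factor $1/(z_0+z_2)$ should be $1/(z_2+z_0)$, expanded in nonnegative powers of $z_0$.
\item The associativity correction terms are of the form $u\circ_g^k(\cdot)$ and $v\circ_g^k(\cdot)$, not only $u\circ_g^k(\cdot)$.
\item For $u*_g\1=u$ you only need $u_{i-1}\1=0$ for $i\ge1$, by the creation property; the expression ``$\D^{(-i)}$'' is meaningless.
\end{enumerate}
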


\begin{proof}
We first prove that $O_g(V)$ is a two-sided ideal of $(V,*_g)$.
For $u\in V^{r*}$, $r\neq 0$, the definition of $*_g$ gives $u*_g V=0$.
By Lemma \ref{lem:Vr in O}, we also know that $V^{0*}*_g V^{r*}\subset V^{r*}$ for $r\neq 0$.
Thus it remains to show that $I=V^{0*}\cap O_g(V)$ is a two-sided ideal of $(V^{0*},*_g)$.

Let $u\in V^{0*}, v\in V^{r*}, w\in V^{(T_0-r)*} \in V$ be three $\frac{1}{2}\BZ$-homogeneous
elements. We must show that both $u*_g(v\circ_g^n w)$ and
$(v\circ_g^n w)*_g u$ lie in $I$.

If $r=0$, the proof is similar to Theorem 2.7 of \cite{LM22}.
Now assume $r\neq 0$. We have
\begin{equation*}
\begin{aligned}
	&u*_g\left(v\circ^n_g w\right)\\
	&\equiv u*_g\left(v\circ^n_g w\right) - (-1)^{|u||v|}v\circ^n_g\left(u*_g w\right)\\
	&= \Res_{z_1}\Res_{z_2} Y(u, z_1)Y(v, z_2)w \frac{(1 + z_1)^{\deg u}}{z_1} \frac{(1 + z_2)^{\deg v - 1 + r/T_0}}{z_2^{1 + n}} \\
	&\quad - (-1)^{|u||v|}\Res_{z_1}\Res_{z_2} Y(v, z_2)Y(u, z_1)w \frac{(1 + z_1)^{\deg u}}{z_1} \frac{(1 + z_2)^{\deg v - 1 + r/T_0}}{z_2^{1 + n}} \\
	&= \Res_{z_0}\Res_{z_2} \frac{(1 + z_2 + z_0)^{\deg u}}{z_2 + z_0} \frac{(1 + z_2)^{\deg v - 1 + r/T_0}}{z_2^{1 + n}} Y\left(Y(u, z_0)v, z_2\right)w\\
	&= \Res_{z_0}\Res_{z_2} \sum_{i \geq 0, j \geq 0} \binom{\deg u}{i} (-1)^j z_0^{i + j} \frac{(1 + z_2)^{\deg(u_{i + j}v) - 1 + r/T_0 + j + 1}}{z_2^{2 + j + n}} Y\left(Y(u, z_0)v, z_2\right)w \\
	&= \Res_{z_2} Y(u_{i + j}v, z_2)w \sum_{i \geq 0, j \geq 0} \binom{\deg u}{i} (-1)^j \frac{(1 + z_2)^{\deg(u_{i + j}v) - 1 + r/T_0 + j + 1}}{z_2^{2 + j + n}} \\
	&\equiv 0 \pmod{O_{g}(V)}.
\end{aligned}
\end{equation*}
The last equation follows from Lemma \ref{m>n in O}.
Next we will show that $(v \circ_g^n w) *_ {g} u \in I$. In fact,
\begin{align*}
	&(v \circ_g^n w) *_ {g} u \\
	&=\left( \Res_{z} \frac{(1+z)^{\deg v-1+\frac{r}{T_0}}}{z^{1+n}} Y(v, z) w\right) *_ {g} u \\
	&=\sum_{j=0}^{\infty} \binom{\deg v-1+\frac{r}{T_0}}{j} (v_{j-1-n} w )* u \\
	&=\sum_{j=0}^{\infty} \binom{\deg v-1+\frac{r}{T_0}}{j} \Res_{z_2} \frac{(1 + z_2)^{\deg v+ \deg w + n - j}}{z_2} Y(v_{j-1-n} w, z_2) u\\
	&=\sum_{j=0}^{\infty} \binom{\deg v-1+\frac{r}{T_0}}{j} \Res_{z_2} \Res_{z_0} z_0^{j-1-n} \frac{(1 + z_2)^{\deg v + \deg w + n - j}}{z_2} Y(Y(v, z_0) w, z_2) u \\
	&=\Res_{z_2} \Res_{z_0} \frac{(1 + z_2+z_0)^{\deg v -1+\frac{r}{T_0}}(1 + z_2)^{\deg w + n +1-\frac{r}{T_0}}}{z_0^{1+n}z_2} Y(Y(v, z_0) w, z_2)u\\
	&=\Res_{z_1} \Res_{z_2} Y(v,z_1)Y(w,z_2)u\frac{(1 + z_1)^{\deg v -1+\frac{r}{T_0}}(1 + z_2)^{\deg w + n +1-\frac{r}{T_0}}}{(z_1 - z_2)^{1+n}z_2}\\
	&\quad - (-1)^{|v||w|}\Res_{z_2} \Res_{z_1} Y(w,z_2)Y(v,z_1)u\frac{(1 + z_1)^{\deg v -1+\frac{r}{T_0}}(1 + z_2)^{\deg w -1 + (n+1) +1-\frac{r}{T_0}}}{(z_2 - z_1)^{1+n}z_2}\\
	&\equiv 0 \pmod {O_{g}(V)}.
\end{align*}

To prove the associativity of $*_{g}$ in $A_g(V)$, we should show $(u*_g v)*_g w \equiv u*_g(v*_g w) \pmod{O_{g}(V)}$, for any $u,v,w\in V^{0*}$. This is the same as Theorem 2.7 in \cite{LM22}.

Since $a*\1=\sum_{i\geq 0}\binom {\deg a}{i} a_{i-1}\1=a$ and
$\1*a=\sum_{i\geq 0}\binom{0}{i}\1_{i-1}a=a$, $\1$ is the identity
element.
\end{proof}

\subsection{The Lie algebra $V[g]$}

For any vertex superalgebra $V$ over a field $\mathbb{F}$,
it is known (see \cite{B86}) that the quotient
\begin{equation*}
    V/\D V
\end{equation*}
carries a natural Lie superalgebra structure, with bracket
\begin{equation}\label{Lie super}
    [u+\D V, v+ \D V]=u_0v+\D V.
\end{equation}
Assume $g$ is an automorphism of $V$ of order $T$ which is coprime to the
characteristic of $\BF$. Let $V^r \subset V$ be the subset defined in
Equation \eqref{decomp g} for $r=0,1,\dots, T-1$.

Let $t$ be an
indeterminate. Consider the vertex algebra $\mathbb{F}[t^{\frac{1}{T}}, t^{-\frac{1}{T}}]$
endowed with vertex operator
\begin{equation*}
    Y(f(t), z)g(t) = f(t+z)g(t) = \left( e^{z\frac{d}{dt}} f(t) \right) g(t),
\end{equation*}
whose $\D^{(n)}$-operators are $\partial_t^{(n)}$ (defined by
$\partial_t^{(n)}(t^m)=\binom{m}{n}t^{m-n}$ for $m\in \frac{1}{T}\BZ$).
For the tensor product vertex superalgebra
\begin{equation*}
    L(V) = \mathbb{F}\bigl[t^{\frac{1}{T}}, t^{-\frac{1}{T}} \bigr]\otimes V,
\end{equation*}
we use $\widehat{\D}^{(n)}$ to denote its $\D^{(n)}$-operators.
Then following \cite{LM22}, $\widehat{\D}^{(n)}$ must be of the form
\begin{equation*}
    \widehat{\D}^{(n)}=\sum_{i=0}^n \partial_t^{(n-i)}\otimes \D^{(i)}\quad \text{ for } n\in \BN.
\end{equation*}
Extend $g$ to an automorphism of $L(V)$ by:
\begin{equation*}
    g(t^m\otimes a)=\eta^{-m} (t^m\otimes ga).
\end{equation*}
Let $L(V, g) $ be the $g$-invariant subalgebra of $L(V)$. Obviously
\begin{equation*}
    L(V, g) = \bigoplus_{r=0}^{T-1} t^{\frac{r}{T}} \mathbb{F}[t, t^{-1}]\otimes V^r.
\end{equation*}
There is a Lie superalgebra structure on the quotient $V[g]= L(V, g)/\widehat{\D} L(V, g)$
with super bracket
\begin{equation*}
    [u + \widehat{\D} L(V, g), v + \widehat{\D} L(V, g)] = u_0 v + \widehat{\D} L(V, g).
\end{equation*}
For convenience, write $v(m)$ for $t^m \otimes v \in L(V, g)$ for $v \in
V$ and $m \in \frac{1}{T}\mathbb{Z}$. We also use $v(m)$ for the image of
$t^m \otimes v \in L(V, g)$ in $V[g]$ if there is no confusion.

By applying the general Lie bracket \eqref{Lie super} on $L(V)/\D L(V)$, one obtains:

\begin{lemma}\label{V[g] super}
$V[g]$ is a Lie superalgebra with the following bracket relation:
\begin{equation*}
	[u(m), v(n)] = \sum_{i \geq 0} \binom{m}{i} (u_i v)(m + n - i)
\end{equation*}
for $u, v \in V$, $m, n \in \frac{1}{T}\mathbb{Z}$, and $\1(-1)$ lies in the center of $V[g]$.
\end{lemma}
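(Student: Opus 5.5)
We would argue in three steps: identify the bracket on the quotient $L(V)/\widehat{\D}L(V)$ using the general bracket \eqref{Lie super}, then restrict to the $g$-invariant part, and finally check that $\1(-1)$ is central.

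First, $L(V)=\BF[t^{\frac1T},t^{-\frac1T}]\otimes V$ is a vertex superalgebra. Its $\D$-operators are $\widehat{\D}^{(n)}$, so $L(V)/\widehat{\D}L(V)$ is a Lie superalgebra by the result recalled before \eqref{Lie super} (\cite{B86}). Here we read $\widehat{\D}L(V)$ as the span of all $\widehat{\D}^{(n)}L(V)$, $n\ge1$, as in the modular setting of \cite{LM22}. The parity of $t^m\otimes v$ is that of $v$.

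We then compute the $0$-th product in the tensor product vertex superalgebra:
\begin{equation*}
Y(t^m\otimes u,z)(t^n\otimes v)=(t+z)^m t^n\otimes Y(u,z)v=\sum_{i,k}\binom{m}{i}t^{m+n-i}z^{i}\otimes u_k v\, z^{-k-1}.
\end{equation*}
Taking the coefficient of $z^{-1}$ forces $i=k$, which gives
\begin{equation*}
(t^m\otimes u)_0(t^n\otimes v)=\sum_{i\ge0}\binom{m}{i}t^{m+n-i}\otimes u_iv.
\end{equation*}
The sum is finite by the truncation condition.

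Next we pass to $L(V,g)$. Since $g$ is an automorphism of $L(V)$, the fixed subspace $L(V,g)$ is a vertex subsuperalgebra, and it is stable under every $\widehat{\D}^{(n)}$ because $g$ commutes with these operators. Hence the same construction applied to the vertex superalgebra $L(V,g)$ makes $V[g]=L(V,g)/\widehat{\D}L(V,g)$ a Lie superalgebra. Its bracket is given by the formula above, restricted to $u\in V^r$ with $m\in\frac rT+\BZ$ and $v\in V^s$ with $n\in\frac sT+\BZ$. The output $u_iv$ lies in $V^{r+s}$ and $m+n-i\in\frac{r+s}T+\BZ$, so it stays in $L(V,g)$; this is consistent. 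Extending bilinearly gives the stated bracket.

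Finally, centrality of $\1(-1)$. From $\1(-1)$ on the left, we have $\1_i=0$ for $i\ge0$, so $[\1(-1),v(n)]=0$. On the other side, super skew-symmetry of the Lie bracket gives $[v(n),\1(-1)]=\pm[\1(-1),v(n)]=0$. One can also check this directly: $v_i\1=0$ for $i\ge0$ by the creation property, so every term of the bracket formula vanishes.

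The main obstacle is justifying that \eqref{Lie super} really yields a Lie superalgebra in characteristic $p$, that is, that $\widehat{\D}L$ is an ideal and the bracket is well defined on the quotient. This rests on the identities $(\D^{(n)}u)_0v=0$ for $n\ge1$ and $u_0\D^{(n)}v=\D^{(n)}(u_0v)$, both of which come from the conjugation formula with Hasse derivatives. Since the paper cites \cite{B86} and \cite{LM22} for this, we would take it as given and only verify that restricting to the fixed points of $g$ preserves these properties.
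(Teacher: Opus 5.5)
Your proposal is correct and takes the same route as the paper, which obtains the bracket simply by applying the general bracket \eqref{Lie super} to the vertex superalgebra $L(V,g)\subset L(V)$. Your computation of $(t^m\otimes u)_0(t^n\otimes v)$ via $(t+z)^m$, the observation that $g$ commutes with the $\widehat{\D}^{(n)}$, the reading of $\widehat{\D}L(V,g)$ as the span of all $\widehat{\D}^{(n)}L(V,g)$ for $n\geq 1$ (which is how the paper uses it later), and the centrality check via $\1_iv=v_i\1=0$ for $i\ge 0$ fill in details the paper leaves implicit.
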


For any $\frac{1}{2}\BZ$-homogeneous $u \in V$, and $m\in\frac{1}{T}\BZ$, define
\begin{equation*}
    \deg(u(m)) = \deg u - m - 1.
\end{equation*}
Then both $L(V, g)$ and $V[g]$ are naturally $\frac{1}{T_0}\mathbb{Z}$-graded:
\begin{equation*}
    L(V,g)=\bigoplus_{n\in \frac{1}{T_0}\BZ} L(V,g)_{n},\quad
    V[g] = \bigoplus_{n \in \frac{1}{T_0}\mathbb{Z}} V[g]_n.
\end{equation*}
Moreover, $V[g]$ admits a triangular decomposition:
\begin{equation*}
    V[g] = V[g]_+ \oplus V[g]_0 \oplus V[g]_-
\end{equation*}
where $V[g]_\pm = \sum_{\substack{0 < n \in \frac{1}{T_0}\mathbb{Z}}} V[g]_{\pm n}$.
Direct calculation shows that
\begin{equation*}
    V[g]_0=\text{span} \{a(\deg a-1)\mid  a\in V^{0*} \text{ homogeneous} \}.
\end{equation*}
Let $A_g(V)_{\textit{Lie}}$ be the Lie superalgebra of the associative
algebra $A_g(V)$ with bracket
\begin{equation*}
    [u,v]=u*_gv-(-1)^{|u||v|}v*u.
\end{equation*}
As in \cite{DLM, DZ}, we have the following result.

\begin{lemma}
There exists a surjective Lie superalgebra homomorphism from
$V[g]_0$ to $A_g(V)_{\text{Lie}}$ by sending $u(\deg u - 1)$ to $u + O_g(V)$
for $u\in V^{0*}$.
\end{lemma}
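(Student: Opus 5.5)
The plan is to define the map $\phi: V[g]_0\to A_g(V)_{\mathrm{Lie}}$ on the spanning set, $\phi(u(\deg u-1)) = u+O_g(V)$ for homogeneous $u\in V^{0*}$. I will first check that $\phi$ is well defined, then that it preserves brackets; surjectivity is immediate, since every class in $A_g(V)$ is represented by an element of $V^{0*}$ (Lemma~\ref{lem:Vr in O} and the remark following it).

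\emph{Well-definedness.} An element $u(m)\in L(V,g)$ with $u\in V^{0*}$ homogeneous and $m=\deg u-1$ has degree $0$. I first check that such elements really lie in $L(V,g)$: if $u\in V^{r}$ and $\deg u\in\frac12\BZ$, then $g\sigma$ acting trivially on $u$ forces $\eta^r(-1)^{2\deg u}=1$, which is exactly the compatibility $m\in \frac rT+\BZ$. The space $\widehat\D L(V,g)\cap L(V,g)_0$ is spanned by the degree-zero components of $\widehat\D^{(n)}(u(m))=\sum_{i=0}^n\binom{m}{n-i}(\D^{(i)}u)(m-n+i)$ with $\deg(u(m))=n$. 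Since $\deg \D^{(i)}u=\deg u+i$, this combination is sent to
\[
\sum_{i=0}^n\binom{m}{n-i}\D^{(i)}u \equiv \sum_{i=0}^n \binom{m}{n-i}\binom{-\deg u}{i}\,u \pmod{O_g(V)}
\]
by Lemma~\ref{Dk cong}. Here $m=\deg u+n-1$. By Chu--Vandermonde the coefficient is $\binom{m-\deg u}{n}=\binom{n-1}{n}$, which vanishes for $n\ge1$; for $n=0$ no element of $\widehat\D L(V,g)$ is produced. If instead $u\in V^{r*}$ with $r\neq0$, the same sum lies in $O_g(V)$ by Lemma~\ref{lem:Vr in O}. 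Hence $\phi$ kills $\widehat\D L(V,g)\cap L(V,g)_0$. I then define $\phi$ on all of $L(V,g)_0$ by sending $u(\deg u-1)$ to $u$ for $u\in V^{0*}$ and to $0$ otherwise. It remains to note that $V[g]_0=L(V,g)_0/(\widehat\D L(V,g))_0$, because $\widehat\D$ preserves the grading.

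\emph{Bracket compatibility.} By Lemma~\ref{V[g] super}, with $m=\deg u-1$ and $n=\deg v-1$,
\[
[u(m),v(n)]=\sum_{i\ge0}\binom{\deg u-1}{i}(u_iv)(\deg u+\deg v-2-i).
\]
Since $\deg(u_iv)=\deg u+\deg v-i-1$, each term has the required form $w(\deg w-1)$, so the image is $\sum_i\binom{\deg u-1}{i}u_iv=\Res_z(1+z)^{\deg u-1}Y(u,z)v$. By Lemma~\ref{zhu algebra equiv}(ii), this is congruent to $u*_gv-(-1)^{|u||v|}v*_gu$ modulo $O(V^{0*})\subset O_g(V)$.

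The main obstacle I expect is the well-definedness step. It requires care with the degree-zero part of $\widehat\D L(V,g)$, in particular that $\D^{(i)}$ preserves the eigenspaces $V^{r*}$, so that the $V^{0*}$ and $V^{r*}$ cases separate. It also requires justifying the binomial identity in $\BD$ reduced mod $p$, where Chu--Vandermonde holds for $\BD$-valued arguments because it is a polynomial identity.
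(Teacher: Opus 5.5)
Your proposal is correct and follows essentially the paper's argument. You identify $L(V,g)_0$ with $V^{0*}$ via $w\mapsto w(\deg w-1)$, show that $(\widehat{\D}L(V,g))_0$ maps into $O_g(V)$ using Lemma~\ref{Dk cong} and Chu--Vandermonde (coefficient $\binom{n-1}{n}=0$), and get bracket compatibility from Lemma~\ref{zhu algebra equiv}(ii), which the paper uses only implicitly. Two small points: the element $u(m)$ you apply $\widehat{\D}^{(n)}$ to has degree $-n$, not $n$ (your choice $m=\deg u+n-1$ is already consistent with $-n$), and the $V^{r*}$, $r\neq 0$ case is vacuous but harmless; also, your sum correctly includes the $i=0$ term that the paper's displayed computation omits.
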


\begin{proof}
Define a linear map $o:V^{0*}\to V[g]_0$ by
\begin{equation*}
o(a)=a(\deg a-1)), \text{ for homogeneous } a\in V^{0*}.
\end{equation*}
Recall that $\widehat{\D}L(V,g)$ is a $\frac{1}{T_0}\BZ$-graded subspace of $L(V,g)$. Thus
\begin{equation*}
V[g]_0=L(V,g)_0/(\widehat{\D}L(V,g))_0.
\end{equation*}
By definition,
\begin{equation*}
L(V,g)_0=\bigoplus_{n\in \frac{1}{2}\BZ} \BF t^{n-1}\otimes V^{0*}_{n},
\end{equation*}
which is linearly isomorphic to $V^{0*}$, while $(\widehat{\D}L(V,g))_0$ is spanned
by elements of the form $\widehat{\D}^{(k)}(t^m\otimes a)$ for $k\in \BZ_+$,
$a\in V^{0*}_n$, $m,n \in \frac{1}{2}\BZ$ with $m=n-1+k$.

Let
\begin{equation*}
    \rho:V^{0*}\to L(V,g)_0
\end{equation*}
be the linear isomorphism defined by
\begin{equation*}
    \rho(v)=t^{\deg v}\otimes v.
\end{equation*}
For any homogeneous $v\in V^{0*}$ and $k\in \BZ_+$, we have
\begin{align*}
    \widehat{\D}^{(k)}(t^{\deg v+k-1}\otimes v)
    &= \sum_{i=0}^k(\partial_t^{(k-i)}\otimes \D^{(i)})(t^{\deg v+k-1}\otimes v)\\
    &=\sum_{i=1}^k\binom{\deg v+k-1}{k-i}(t^{\deg v-1+i}\otimes \D^{(i)}v)\\
    &=\rho \left(\sum_{i=1}^k\binom{\deg v+k-1}{k-i} \D^{(i)}v\right).
\end{align*}
Since $\rho$ is an isomorphism, it follows that
\begin{equation*}
    \ker{\rho}=\text{span}\left\{\sum_{i=1}^k\binom{\deg v+k-1}{k-i} \D^{(i)}v\mid v\in V^{0*}, k\in \BZ_+\right\}.
\end{equation*}
By Lemma \ref{Dk cong}, we have
\begin{align*}
    0=\binom{k-1}{k}v=\sum_{i=1}^k\binom{\deg v+k-1}{k-i} \binom{-n}{i}v\equiv \sum_{i=1}^k\binom{\deg v+k-1}{k-i} \D^{(i)}v\pmod {O_g(V)}.
\end{align*}
Hence,
\begin{equation*}
    \ker \rho\subset O_g(V).
\end{equation*}

Therefore, there exists a surjective linear map
\begin{equation*}
    \phi: V[g]_0\cong V^{0*}/\ker{\rho} \to A_g(V)
\end{equation*}
such that
\begin{equation*}
    \phi(a(\deg a-1))=a+O_g(V),\quad a\in V^{0*}.
\end{equation*}
The Lie bracket of $V[g]_0$ is given by
\begin{equation*}
    [u(\deg u-1), v(\deg v-1)] = \sum_{i \geq 0} \binom{\deg u-1}{i} u_i v(\deg (u_iv)-1),
\end{equation*}
while the Lie bracket on $A_g(V)_{Lie}$ is given by
\begin{equation*}
    [u+O_g(V),v+O_g(V)]=u*_gv-(-1)^{|u||v|}v*u+O_g(V).
\end{equation*}
It follows that $\phi$ preserves the Lie superbracket.
Thus $\phi$ is a Lie superalgebra homomorphism from $V[g]_0$ to $A_g(V)_{\textit{Lie}}$.
\end{proof}

\subsection{The functor $\Omega$}

We will construct a covariant functor $\Omega$ from the category of
$g$-twisted $V$-modules to the category of $A_g(V)$-modules.

Let $M$ be a $g$-twisted $V$-module. Define a subspace of $M$
\begin{equation*}
\Omega(M)=\{w\in M\mid u_{n}w=0, u\in V, n\in \frac{1}{T}\BZ, n>\deg u-1\}.
\end{equation*}
The main result in this section is the following theorem which says
that $\Omega(M)$ is an $A_g(V)$-module.
The proof is essentially the same as the proof of the characteristic $0$
case \cite{DZ,DLM,Zhu} and the modular case \cite{DW, LM21,LM22}.

\begin{theorem}
Let $M$ be a weak $g$-twisted V-module. For each $a\in V$, set
\begin{equation*}
    [a]=a+O_g(V)\in A_g(V).
\end{equation*}
Then $\Omega(M)$ is an $A_g(V)$-module, where $[a]$ acts as $o(a)$.
\end{theorem}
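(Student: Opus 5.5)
We would follow the Dong--Li--Mason/Zhu strategy, adapted to the modular setting. Fix a $\frac12\BZ$-homogeneous $a\in V$ and define $o(a)=a_{\deg a-1}$ on $M$ when $a\in V^{0*}$, and $o(a)=0$ when $a\in V^{r*}$ with $r\neq 0$. The integrality condition is the key point: for $a\in V^{r}$ the modes $a_n$ have $n\in \frac rT+\BZ$, and $\deg a-1\in \frac rT+\BZ$ holds exactly when $g\sigma a=a$. This is why the index $\deg a-1$ is admissible precisely on $V^{0*}$, and why $[a]=0$ in $A_g(V)$ for $a\in V^{r*}$ with $r\ne0$ (Lemma~\ref{lem:Vr in O}) is consistent with $o(a)=0$.

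\textbf{Step 1: $o(a)$ preserves $\Omega(M)$.} For $w\in\Omega(M)$, $b\in V$ homogeneous and $n>\deg b-1$, we apply the twisted commutator formula \eqref{twisted commutator} in component form:
\[
b_n a_{\deg a-1}w=\pm a_{\deg a-1}b_nw+\sum_{i\ge0}\binom{n}{i}(b_ia)_{n+\deg a-1-i}w .
\]
The first term vanishes. In the sum, $\deg(b_ia)-1=\deg a+\deg b-i-2<n+\deg a-1-i$, so each summand vanishes as well.

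\textbf{Step 2: $o$ kills $O_g(V)$ on $\Omega(M)$.} We must show $o(a\circ_g^n b)w=0$ for $w\in\Omega(M)$. For $a\in V^{0*}$, we write $o(a\circ_g^n b)$ as a residue using the twisted Jacobi identity, or equivalently twisted weak associativity \eqref{associativity}, with exponent $\deg a$. This expresses it as
\[
\Res_{z}\Res_{x_1}\,(\cdots)\bigl(Y_M(a,x_1)Y_M(b,z)\mp Y_M(b,z)Y_M(a,x_1)\bigr)w .
\]
Grading the resulting modes shows that every term either has $a$ acting first with a mode of index $>\deg a-1$, which is zero on $\Omega(M)$, or has $a_m$ applied last to a vector of too high degree, which vanishes by the same degree count. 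For $a\in V^{r*}$ with $r\ne0$, the exponent is $\deg a-1+\frac r{T_0}$. Here $\deg(a\circ_g^nb)$ need not lie in $V^{0*}$, and one must check that the $V^{0*}$-component acts as zero. We expect this to go through the same way, since the fractional exponent $\frac r{T_0}$ is exactly what is needed to align the powers of $x_1$ with $\frac rT+\BZ$ modulo the $\sigma$-twist.

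\textbf{Step 3: multiplicativity.} We show $o(a*_gb)w=o(a)o(b)w$ for $a,b\in V^{0*}$ and $w\in\Omega(M)$. Using the twisted iterate formula \eqref{Twisted iterate}, we expand
\[
o(a*_gb)=\sum_i\binom{\deg a}{i}(a_{i-1}b)_{\deg a+\deg b-i-1}.
\]
The $Y_M(b)Y_M(a)$ term contributes nothing, because $a$ acts on $w$ with index $\ge\deg a$. The $Y_M(a)Y_M(b)$ term collapses to $a_{\deg a-1}b_{\deg b-1}w$ by the standard binomial identity, since $b_{\deg b-1}w$ has the same degree as $w$ and is annihilated by the higher modes of $a$. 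Finally, $o(\1)=\mathrm{id}$. Together with Steps 1 and 2, this makes $\Omega(M)$ an $A_g(V)$-module.

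We expect the main obstacle to be the residue bookkeeping in characteristic $p$. All binomial coefficients have arguments in $\BD$, via $\pi$, and Hasse derivatives replace division by factorials. The identities that are needed (Chu--Vandermonde and the expansions $(x_1-x_0)^{\alpha}$) are polynomial identities over $\BD$, so they reduce correctly modulo $p$. The hardest case is Step 2 for $r\neq0$, where the $\frac r{T_0}$ shift and the sign $(-1)^{|a||b|}$ interact.
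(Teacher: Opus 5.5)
Your plan matches the paper's proof. Step~1 is exactly the paper's invariance argument, and Step~3 is the Zhu computation the paper cites. Step~2 splits, as in the paper, into two cases: $a,b\in V^{0*}$, which the paper delegates to \cite{LM22}, and $a\in V^{r*}$, $b\in V^{(T_0-r)*}$ with $r\neq 0$. The other components of $a\circ_g^n b$ lie in some $V^{s*}$ with $s\neq0$, where $o=0$. The gap is that this last case is left at ``we expect this to go through.'' It is the only new ingredient relative to the untwisted super case, and the one case the paper writes out.

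The missing argument is short. Since $r\neq0$, the index $\deg a-1$ is not in the mode lattice of $a$; with $\eta,\varepsilon$ chosen compatibly, that lattice is $\deg a-1+\frac{r}{T_0}+\BZ$. So for $w\in\Omega(M)$ the top mode that can act nontrivially is $a_{\deg a-2+\frac{r}{T_0}}$. Hence $z^{\deg a-1+\frac{r}{T_0}}Y_M(a,z)w\in M[[z]]$, and \eqref{associativity} holds with exponent $\deg a-1+\frac{r}{T_0}$. Now apply $\Res_{z_0}\Res_{z_2}z_0^{-1-n}z_2^{\deg b-\frac{r}{T_0}+n}$. The right-hand side becomes $\sum_{i\ge0}\binom{\deg a-1+\frac{r}{T_0}}{i}o(a_{i-1-n}b)w=o(a\circ_g^n b)w$. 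The left-hand side is
\begin{equation*}
\sum_{j\ge 0}\binom{n+j}{j}\,a_{\deg a-2+\frac{r}{T_0}-n-j}\,b_{\deg b-\frac{r}{T_0}+n+j}\,w=0,
\end{equation*}
because $\frac{r}{T_0}<1$ forces every $b$-index above $\deg b-1$. Both bounds $0<\frac{r}{T_0}<1$ are used. The lower bound ($r\ne0$) makes the smaller exponent $\deg a-1+\frac{r}{T_0}$ legitimate in the associativity formula. The upper bound kills the left side.

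A smaller point: $M$ is only a weak module, so it carries no grading and no ``degree count'' is available. In Steps~2 and~3, all vanishing must come from $a_kw=0$ for $k>\deg a-1$ and $b_kw=0$ for $k>\deg b-1$; the residue exponents are chosen precisely to produce this. In Step~3, for example, the term $a_{\deg a-1}b_{\deg b-1}w$ survives because the other terms are $a_{\deg a-1-j}b_{\deg b-1+j}w$ with $b_{\deg b-1+j}w=0$ for $j\ge1$. It is not because the higher modes of $a$ kill $o(b)w$; that is true by Step~1, but it is not what is used. Likewise, in the $V^{0*}$ case of Step~2, the first Jacobi term vanishes through $b_{\deg b+n+j}w=0$, not through the degree of the vector on which $a$ acts.
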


\begin{proof}
We first show that $\Omega(M)$ is invariant under the action
of $o(a)$ for all $a\in V$. For any $a\in V^{r*} (\neq 0)$, we clearly have $o(a)=0$.
Now let $a\in V^{0*}$. Take $b\in V$, $w\in \Omega (M),$ and $n>\deg b-1$.
Then by equation \eqref{commutator formula},
\begin{equation*}
    b_n o(a)w=(-1)^{|u||v|}o(a)b_n w+\sum_{i\geq 0}\binom{n}{i}(b_i a)_{\deg a+n-1-i}w=0,
\end{equation*}
since $b_nw=0$ and $(b_i a)_{\deg a+n-1-i}w=0$ for all $i\geq 0$.
Hence $o(a)\Omega(M)\subset \Omega(M)$.

Using an argument identical to that in \cite{Zhu}, we obtain that
$o(u*_g v)=o(u)o(v)$ on $\Omega(V)$ for any $u,v\in V^{0*}$.

It remains to show that $o(a)$ acts as $0$ on $\Omega(M)$ for all $a\in O_g(V)$.
If $a\in V^{r*}$ with $r\neq 0$, then $a\not \in V^0$ and hence $o(a)=0$.
Now assume $a\in V^{0*}\cap O_g(V)$. There are two possible cases:
\begin{equation*}
    a=\Res_z\frac{(1+z)^{\deg u}}{z^{2+n}}Y(u,z)v, {\text{ for }} u, v\in V^{0*};
\end{equation*}
or
\begin{equation*}
    a=\Res_z\frac{(1+z)^{\deg u-1+\frac{r}{T_0}}}{z^{1+n}}Y(u,z)v
\end{equation*}
for some $u\in V^{r*}$, $v\in V^{(T_0-r)*}$, $0<r\leq T_0-1$.
For the first case, $a=u\circ_g^n v$, $u,v\in V^{0*}$,
the proof is identical to that of Proposition 2.14 in \cite{LM22}.
For the second case, we apply the associativity formula \eqref{associativity}.
Since $z^{\deg u-1+\frac{r}{T_0}}Y_W(u,z)w$ involves only non-negative
integer powers of $z$ for $w\in \Omega(M)$, we obtain
\begin{equation*}
    (z_0+z_2)^{\deg u-1+\frac{r}{T_0}}Y_M(u,z_0+z_2)Y_W(v,z_2)w=(z_2+z_0)^{\deg u-1+\frac{r}{T_0}}Y_M(Y(u,z_0)v,z_2)w.
\end{equation*}
Applying $\Res_{z_0}\Res_{z_2}z_0^{-1-n}z_2^{\deg v-\frac{r}{T_0}+n}$ to the formula
above yields
\begin{align*}
    0&=\Res_{z_0}\Res_{z_2}z_0^{-1-n}z_2^{\deg v-\frac{r}{T_0}+n}(z_0+z_2)^{\deg u-1+\frac{r}{T_0}}Y_M(u,z_0+z_2)Y_W(v,z_2)w\\
    &=\Res_{z_2}\sum_{i\geq 0}\binom {\deg u-1+\frac{r}{T_0}}{i}\Res_{z_2} z_2^{\deg u+\deg v+n-i-1}Y_M(u_{i-1-n}v,z_2)w\\
    &=\sum_{i\geq 0}\binom {\deg u-1+\frac{r}{T_0}}{i}o(u_{i-1-n}v)w\\
    &=o(a)w.
\end{align*}
Thus $o(a)$ acts as zero on $\Omega(M)$. Therefore,
the assignment $[a]\mapsto o(a)$ defines an $A_g(V)$-module structure on $\Omega(M)$.
\end{proof}

Using the same proof as in \cite{DLM}, we have:

\begin{proposition}
Let $M=\bigoplus_{n\in \frac{1}{T_0}\BN} M(n)$ be an irreducible
$\frac{1}{T_0}\BN$-graded $g$-twisted $V$-module with $M(0)\neq 0$.
Then $\Omega(M)$ is a simple $A_g(V)$-module and $\Omega(M)=M(0)$.
\end{proposition}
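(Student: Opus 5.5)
The argument has three steps: show $M(0)\subset\Omega(M)$ by a degree count, show every nonzero vector of $\Omega(M)$ lies in $M(0)$ using irreducibility, and then deduce simplicity of $M(0)$ as an $A_g(V)$-module from irreducibility of $M$.

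\textbf{Step 1: $M(0)\subset\Omega(M)$.} For homogeneous $u\in V$ and $n\in\frac1T\BZ$ we have $u_nM(0)\subset M(\deg u-n-1)$. If $n>\deg u-1$, the index $\deg u-n-1$ is negative. That component is zero, since the grading is by $\frac{1}{T_0}\BN$. Hence $u_nw=0$ for all $w\in M(0)$, and so $M(0)\subset\Omega(M)$.

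\textbf{Step 2: $\Omega(M)\subset M(0)$.} First, $\Omega(M)$ is a graded subspace, because its defining conditions are homogeneous. So it suffices to take a nonzero homogeneous $w\in\Omega(M)\cap M(k)$ and show $k=0$.

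By irreducibility, $M$ is spanned by the vectors $u_nw$ with $u\in V$ and $n\in\frac1T\BZ$. For this I use the standard fact, proved as in \cite{DLM} (Li's lemma), that for a twisted module the span $\{u_nw\}$ is a submodule. This relies on the twisted associativity \eqref{associativity}, which rewrites products $a_mb_nw$ as finite combinations of $(a_ib)_jw$.

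Since $w\in\Omega(M)$, only indices with $n\le\deg u-1$ contribute. Then $\deg(u_nw)=k+\deg u-n-1\ge k$. So every homogeneous component of $M$ would sit in degree at least $k$. Because $M(0)\ne0$, this forces $k=0$.

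The main obstacle is the spanning statement $M=\Span\{u_nw\}$ in characteristic $p$. The route I will follow is to check that the proof of \cite{DLM}, Lemma 6.1 / \cite{Li96T}, transfers verbatim. That proof only needs the twisted associativity with exponent $k+\frac rT$, where $\frac rT\in\BD$ since $p\nmid T$, and the binomial expansions over $\BD$. Both are available here.

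\textbf{Step 3: simplicity.} By the previous theorem, $\Omega(M)=M(0)$ is an $A_g(V)$-module. Let $U\subset M(0)$ be a nonzero $A_g(V)$-submodule and take $0\ne w\in U$. By the same spanning lemma, $M=\Span\{u_nw\}$. Its degree-$0$ part is spanned by the vectors $u_{\deg u-1}w=o(u)w$ with $u$ homogeneous and $\deg u-1\in\frac1T\BZ$ in the appropriate coset. When $u\notin V^{0*}$ the index never matches, so $o(u)=0$, and nonzero contributions come only from $V^{0*}$. Thus $M(0)=o(V^{0*})w\subset U$, and hence $M(0)$ is simple.
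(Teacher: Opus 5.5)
Your proposal is correct and follows essentially the argument the paper relies on when it defers to \cite{DLM}: the degree count gives $M(0)\subset\Omega(M)$, and the spanning lemma $M=\Span\{u_nw\}$ yields both $\Omega(M)\subset M(0)$ and simplicity via $M(0)=o(V^{0*})w$. The spanning lemma does carry over to characteristic $p$, since twisted associativity only introduces binomial coefficients with arguments in $\BD$, and your check that $u_{\deg u-1}$ is an allowed mode exactly when $u\in V^{0*}$ is the right observation for the degree-zero step.
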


\subsection{Generalized Verma modules and the functor $L$}

Let $U$ be an $A_g(V)$-module. Then $U$ naturally becomes a module
for the Lie superalgebra $A_g(V)_{\textit{Lie}}$, and hence also a module
for $V[g]_0$. We extend it to a $V[g]_{\leq 0}$-module by defining $V[g]_{-} U = 0$.
Consider the induced module
\begin{equation*}
M(U) = U(V[g]) \otimes_{U(V[g]_{\leq 0})} U,
\end{equation*}
where $U(V[g])$ denotes the universal enveloping algebra of the Lie
superalgebra $V[g]$. Set $\deg U=0$. Then $M(U)$ becomes a
$\frac{1}{T_0}\mathbb{N}$-graded module.
$M(U)(n) = U(V[g]_+)_n U$ for $n \in \frac{1}{T_0}\BN$ by the PBW theorem,
and in particular, $M(U)_0 = U$.

For $v \in V$, we set
\begin{equation*}
    Y_{M(U)}(v, z) = \sum_{m \in \frac{1}{T}\mathbb{Z}} v(m) z^{-m - 1}.
\end{equation*}
This operator satisfies every requirement for a $g$-twisted $V$-module except the associativity. To enforce associativity, consider the subspace $W \subset M(U)$ spanned by the coefficients of the following two expressions:

(1) $(z_0 + z_2)^{\deg u} Y(u, z_0 + z_2) Y(v, z_2) w - (z_2 + z_0)^{\deg u} Y(Y(u, z_0) v, z_2) w$, 
where $u$ is a homogeneous element in $V^{0*}$, $v \in V$, and $w \in W$.

(2) $(z_0 + z_2)^{\deg u - 1 + \frac{r}{T_0}} Y(u, z_0 + z_2) Y(v, z_2) w - (z_2 + z_0)^{\deg u - 1 + \frac{r}{T_0}} Y(Y(u, z_0) v, z_2) w$, 
where $u$ is a homogeneous element in $V^{r*}$ for $r \neq 0$, $v \in V$ and $w \in U$.

By the same argument as \cite{DZ} , we have:

\begin{theorem}
Let $U$ be an $A_g(V)$-module. Then the quotient $\overline{M}(U)=M(U)/W$
carries the structure of a $g$-twisted $V$-module, with vertex operator
\begin{equation*}
    Y(v,z)=\sum_{n\in \BZ/T_0} v(n)z^{-n-1} \text{ for } v\in V.
\end{equation*}
Furthermore, $\overline{M}(U)(0)=U$ and $\overline{M}(U)$ has the following
universal property: for any $g$-twisted $V$-module
$M$ and for any $A_g(V)$-module morphism $\phi:U\to \Omega(M)$,
there exists a unique $g$-twisted $V$-module homomorphism
$\bar{\phi}:\overline{M}(U)\to M$ extending $\phi$.
\end{theorem}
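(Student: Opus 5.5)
The plan follows the Dong--Li--Mason/Dong--Zhao argument, using the modular local system theory of Section 3 in place of its characteristic-zero counterpart. I read $W$ as the smallest $V[g]$-submodule of $M(U)$ containing all coefficients of (1) and (2); in (1) I take $w \in U$ as well. Set $\overline{M}(U)=M(U)/W$. There are four things to prove: $W$ is graded, $W\cap U=0$, the twisted Jacobi identity holds on $\overline{M}(U)$, and the universal property holds.

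\emph{Step 1: grading.} The expressions in (1) and (2) are homogeneous once the degree convention $\deg(u(m))=\deg u-m-1$ is used. Hence $W$ is a graded submodule, and $\overline{M}(U)=\bigoplus_{n\in\frac1{T_0}\BN}\overline{M}(U)(n)$ with $\overline{M}(U)(0)=U/(W\cap U)$.

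\emph{Step 2: $W\cap U=0$, the main obstacle.} I will argue as in \cite{DLM,DZ}. Since $W$ is generated by the coefficients of (1) and (2), it is enough to show that the degree-zero part of the $V[g]$-submodule they generate is $0$. Projecting a degree-zero element onto $U$ amounts to applying a product of zero-mode operators $o(\cdot)$ to the degree-zero coefficients of (1) and (2). For (1) with $u\in V^{0*}$, the degree-zero coefficient should reduce to the action of $u*_gv-$ (elements of $O_g(V)$) on $w$ minus $o(u)o(v)w$. This vanishes because $U$ is an $A_g(V)$-module, by the argument already used in the theorem showing $\Omega(M)$ is an $A_g(V)$-module. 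For (2) the analogous coefficient is $\sum_i\binom{\deg u-1+r/T_0}{i}o(u_{i-1-n}v)w$, which is $o$ of an element of $O_g(V)$ and hence $0$; this is the same computation as in that proof, read in reverse.

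The real difficulty is that $W$ also contains $V[g]$-translates of these coefficients. Here the key tool is the bracket formula of Lemma \ref{V[g] super}: moving raising and lowering operators past the generators produces linear combinations of coefficients of the same type. The only change from characteristic zero is that binomial coefficients lie in $\BD$ and are reduced through $\pi$. I expect to imitate \cite{DZ} line by line, checking that no division by $p$ is ever needed; this holds because all coefficients come from $\binom{\alpha}{i}$ with $\alpha\in\BD$.

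\emph{Step 3: twisted module structure.} The fields $Y(v,z)=\sum v(m)z^{-m-1}$ on $\overline{M}(U)$ are truncated, because $\overline{M}(U)$ is $\frac1{T_0}\BN$-graded. Weak supercommutativity follows from the commutator relation of $V[g]$ together with Proposition \ref{relation untwited and twisted comm formula}. Twisted associativity holds by construction on $U$, first with $w\in U$. To pass to all of $\overline{M}(U)$, I use the fact that $\overline{M}(U)$ is generated by $U$ under the modes. By the remark that supercommutativity together with twisted associativity is equivalent to the twisted Jacobi identity, it suffices to establish these two properties on a generating set. This is the Li-type argument: the fields $\{Y(v,z)\}$ form a local subspace of $\E_T(\overline{M}(U))$, and Theorem \ref{VA and twisted modules from local system} lets the identity propagate from $U$ to the whole module.

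\emph{Step 4: universal property.} Let $\phi:U\to\Omega(M)$ be an $A_g(V)$-module map. Then $M$ is a $V[g]$-module via $v(m)\mapsto v_m$, and $V[g]_-$ annihilates $\Omega(M)$. The universal property of induction therefore gives a $V[g]$-module map $M(U)\to M$ extending $\phi$. Expressions (1) and (2) vanish in $M$ by the twisted weak associativity \eqref{associativity}, since $\phi(U)\subset\Omega(M)$, so $W$ maps to zero and the map descends to $\bar\phi$. Uniqueness holds because $\overline{M}(U)$ is generated by $U$.
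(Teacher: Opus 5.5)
Your proposal takes essentially the same route as the paper, which gives no details beyond invoking the Dong--Zhao (Dong--Li--Mason) argument. That argument runs as follows: quotient $M(U)$ by the $V[g]$-submodule generated by the associativity relations with $w\in U$, show its degree-zero part vanishes because $U$ is an $A_g(V)$-module, propagate associativity from $U$ using the commutator formula, and obtain the universal property from induction together with twisted weak associativity in $M$. One correction in Step 2: raising operators do not turn relation coefficients into coefficients of the same type, since they move $w$ out of $U$, but you never need them. Let $W_0$ be the span of the coefficients. By PBW, $U(V[g])W_0=U(V[g]_+)U(V[g]_{\leq 0})W_0$, and what the bracket formula actually gives is that $W_0$ is stable under $V[g]_{\leq 0}$, with exponents shifting by nonnegative integers. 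Hence the degree-zero part of the generated submodule is just the span of the degree-zero coefficients.
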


Note that $M(U)$ has a unique maximal $V[g]$-submodule $J$ with the property that $J\cap U=0$.
Set $L(U)=M(U)/J$. We have:

\begin{theorem}
$L(U)$ is a $\frac{1}{T_0}\BN$-graded $g$-twisted $V$-module with $\Omega L(U)\cong U$.
Moreover, this construction defines a functor
\begin{equation*}
L: \{A_g(V)\text{-modules}\}\to \Bigl\{\text{$\frac{1}{T_0}\BN$-graded $g$-twisted $V$-modules}\Bigr\}
\end{equation*}
which is left adjoint to the functor $\Omega$, i.e. $\Omega \circ L\cong 1$.
\end{theorem}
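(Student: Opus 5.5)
The natural route is to realize $L(U)$ as a quotient of the module $\overline{M}(U)$ from the previous theorem, so the vertex operator structure is inherited rather than checked afresh. The first step is to show that the maximal graded $V[g]$-submodule $J$ of $M(U)$ with $J\cap U=0$ contains the subspace $W$ used to define $\overline{M}(U)=M(U)/W$. Since $\overline{M}(U)(0)=U$, the image of $W$ meets degree zero trivially, that is, $W\cap U=0$. The space $W$ is also a graded $V[g]$-submodule: as in \cite{DZ}, $W$ is defined as the span of coefficients of associativity defects over all $w$, and is stable under $v(m)$ by the commutator formula. Maximality of $J$ then gives $W\subset J$. Hence $L(U)=M(U)/J$ is a quotient of $\overline{M}(U)$ by the $V$-submodule $J/W$; here $V[g]$-submodules of $\overline{M}(U)$ coincide with twisted $V$-submodules, because the modes $v(m)$ act as the components of $Y(v,z)$. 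Consequently $L(U)$ is a $g$-twisted $V$-module. It is $\frac{1}{T_0}\BN$-graded because $J$ is graded (the sum of graded submodules avoiding $U$ is again such a submodule, since $U$ is the whole degree-zero piece), and $L(U)(0)=U\neq 0$ when $U\neq0$.

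Next I would identify $\Omega(L(U))$ with $U$. Degree considerations give $u_n L(U)(0)\subset L(U)(\deg u-n-1)$, which vanishes when $n>\deg u-1$ because the grading is bounded below by $0$. Hence $U=L(U)(0)\subset\Omega(L(U))$, and the $A_g(V)$-action $o(a)=a(\deg a-1)$ restricts to the original action on $U$, so the inclusion is a module map. For the reverse inclusion, take $w\in\Omega(L(U))$ and split it into homogeneous components; each component lies in $\Omega$ again, because the defining conditions are graded. A homogeneous $w$ of positive degree generates a submodule $U(V[g])w=U(V[g]_{+})U(V[g]_0)w$, using the PBW theorem and the fact that $V[g]_-$ kills $w$. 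This submodule lives in degrees at least $\deg w>0$, so it meets $U$ trivially. Its preimage in $M(U)$, added to $J$, would then be a graded submodule strictly larger than $J$ that still avoids $U$, contradicting maximality. Hence $w=0$ and $\Omega(L(U))=U$.

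Functoriality comes next. An $A_g(V)$-map $f:U\to U'$ induces a graded $V[g]$-map $M(U)\to M(U')\to L(U')$. The image of $J$ is a graded submodule whose degree-zero part is the image of $J\cap U=0$; by maximality of $J'$ it lies in $J'$, so the map descends to $L(f)$. Naturality of $\Omega\circ L\cong 1$ is immediate from the explicit identification above. For the adjointness I would use the universal property of $\overline{M}(U)$ together with the surjection $\overline{M}(U)\to L(U)$. The step I expect to be the main obstacle is checking that $W\subset J$, in particular that $W$ is really a $V[g]$-submodule with $W\cap U=0$ in the modular, $\frac12\BZ$-graded twisted setting. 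There the exponents $\deg u-1+\frac{r}{T_0}$ and the expansions via Hasse derivatives must be handled with care. My plan is to quote the argument of \cite{DZ} for that step, checking only that Lemma \ref{m>n in O} and Lemma \ref{Dk cong} supply the binomial identities needed in characteristic $p$.
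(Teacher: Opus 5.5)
Most of your argument is correct. Realizing $L(U)$ as a quotient of $\overline{M}(U)$, proving $\Omega(L(U))=U$ via maximality of $J$, and proving functoriality are all sound, and this is the standard Dong--Li--Mason/Dong--Zhao argument the paper relies on without writing out a proof. You are also right to take $J$ to be the maximal \emph{graded} submodule meeting $U$ trivially; the paper's wording omits ``graded'', but the uniqueness argument needs it. The step that fails is the adjointness.

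\textbf{The adjointness step.} The universal property of $\overline{M}(U)$ extends an $A_g(V)$-map $\phi:U\to\Omega(M)$ to $\bar\phi:\overline{M}(U)\to M$. Nothing forces $\bar\phi$ to kill $J/K$, where $K$ is the kernel of $M(U)\to\overline{M}(U)$, so $\bar\phi$ need not factor through $L(U)$. In fact $L$ is not left adjoint to $\Omega$ in general:
\begin{enumerate}[(a)]
\item Take $M=\overline{M}(U)$, which is $\frac{1}{T_0}\BN$-graded, and let $\phi$ be the inclusion $U=\overline{M}(U)(0)\subset\Omega(\overline{M}(U))$.
\item Suppose $\psi:L(U)\to\overline{M}(U)$ extends $\phi$, and let $\pi:\overline{M}(U)\to L(U)$ be the projection. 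Then $\pi\circ\psi$ is the identity on the generating subspace $U$, hence $\pi\circ\psi=1$, so $\psi$ is injective.
\item $\psi$ is also surjective, because $U$ generates $\overline{M}(U)$.
\item Hence $\pi$ would be an isomorphism, i.e.\ $\overline{M}(U)=L(U)$ for every $U$. This fails as soon as some simple $U$ has $\overline{M}(U)$ reducible, as typically happens for generalized Verma modules over affine vertex superalgebras.
\end{enumerate}
The left adjoint of $\Omega$ is $\overline{M}$; that is exactly the universal property in the preceding theorem. For $L$, the restriction map $\mathrm{Hom}(L(U),M)\to\mathrm{Hom}_{A_g(V)}(U,\Omega(M))$ is only injective. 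So the last clause can only mean $\Omega\circ L\cong 1$, as the ``i.e.'' indicates, and you have already proved that. Drop the adjointness plan rather than try to complete it.

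\textbf{A smaller point about $W$.} Your description of $W$ is off, which would matter if you tried to redo \cite{DZ}.
\begin{enumerate}[(a)]
\item The relations with exponent $\deg u$ (resp.\ $\deg u-1+\frac{r}{T_0}$) are imposed only for $w\in U$; the ``$w\in W$'' in item (1) is a typo.
\item For $w$ of positive degree these exponents are in general too small for weak associativity even in an honest graded module (test with $v=\1$). So imposing them for all $w$ would be wrong.
\item The kernel $K$ is the submodule $U(V[g])W$ generated by those coefficients, not their span. The commutator formula does not make the span itself stable.
\end{enumerate}
You need nothing beyond the preceding theorem here. It says exactly that $K$ is a graded $V[g]$-submodule with $K\cap U=0$, so $K\subset J$ by maximality. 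Lemmas \ref{m>n in O} and \ref{Dk cong} play no further role in this step.
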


The following result is an immediate consequence.

\begin{theorem}
Suppose $U$ is a simple $A_g(V)$-module.
Then $L(U)$ is a simple $\frac{1}{T_0}\BN$-graded $g$-twisted $V$-module.
\end{theorem}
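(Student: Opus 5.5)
We show that any nonzero graded submodule $N$ of $L(U)$ must be all of $L(U)$; the key fact is that $N$ meets the degree-zero space $L(U)(0)=U$ nontrivially. By the preceding theorem, $L(U)=M(U)/J$ is a $\frac{1}{T_0}\BN$-graded $g$-twisted $V$-module with $\Omega(L(U))\cong U$ and $L(U)(0)=U$. Here $J$ is the unique maximal $V[g]$-submodule of $M(U)$ with $J\cap U=0$. The $V$-module structure on $L(U)$ comes from the action of $V[g]$ through $v(m)\mapsto v_m$. So $V$-submodules of $L(U)$ coincide with $V[g]$-submodules, since the modes $v_m$ are exactly the images of the $v(m)$.

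\textbf{Step 1 (any nonzero submodule meets $U$).} Let $N\subset L(U)$ be a nonzero $V$-submodule, graded with respect to the $\frac{1}{T_0}\BN$-grading; graded submodules are what the notion of a graded simple module requires. Let $\pi:M(U)\to L(U)$ be the projection and $\tilde N=\pi^{-1}(N)$. Then $\tilde N$ is a $V[g]$-submodule of $M(U)$ that strictly contains $J$. By the maximality of $J$, we get $\tilde N\cap U\neq 0$, and hence $N\cap U\neq0$. A more concrete alternative: take a homogeneous $0\neq w\in N(n)$ of minimal degree. Then $V[g]_-w$ has degree below $n$, so by minimality $V[g]_-w=0$. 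The $V[g]$-submodule generated by any lift of $w$ then stays inside the preimage of $N$. If $n>0$, this submodule together with $J$ would give a submodule of $M(U)$ meeting $U$ trivially and strictly larger than $J$, which is a contradiction. So $n=0$.

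\textbf{Step 2 (simplicity of $U$ gives everything).} The space $N\cap U$ is invariant under $o(a)$ for $a\in V^{0*}$, because these operators preserve degree and $N$. On $U=\Omega(L(U))$ these operators realize the $A_g(V)$-action. So $N\cap U$ is a nonzero $A_g(V)$-submodule of the simple module $U$, and therefore $N\cap U=U$. Since $M(U)=U(V[g])U$, the module $L(U)$ is generated by $U$, and so $N=L(U)$. Finally $L(U)\neq0$ because $L(U)(0)=U\neq0$. Hence $L(U)$ is simple.

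\textbf{Main obstacle.} The delicate point is Step 1. We must justify that a $V$-submodule of $L(U)$ pulls back to a $V[g]$-submodule of $M(U)$. We must also justify that the maximality of $J$ applies to subspaces defined by the condition $\cap\, U=0$. Both reduce to the fact that the $V[g]$-action on $L(U)$ is exactly the twisted module action, including the grading. The grading is preserved because each $v(m)$ has a well-defined degree $\deg v-m-1$.
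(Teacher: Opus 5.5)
Your proof is correct and follows the standard argument the paper has in mind when it calls this an immediate consequence of the construction $L(U)=M(U)/J$ together with $\Omega(L(U))\cong U$. A nonzero graded submodule meets $L(U)(0)=U$ by maximality of $J$; simplicity of $U$ then forces it to contain $U$, and $U$ generates $L(U)$. Your restriction to graded submodules is also the right one, because the maximal $J$ with $J\cap U=0$ has to be taken among graded submodules: non-graded submodules meeting $U$ trivially need not be contained in it.
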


\section{(Twisted) Affine Lie superalgebras}

In this section, we study twisted modules for modular vertex superalgebras
associated with a twisted affine Lie superalgebra. Throughout, let
$\BF$ be an algebraically closed field of characteristic $p>2$.

Let $\fg=\fg_{\bar0}\oplus \fg_{\bar1}$ be a finite dimensional Lie superalgebra
such that $[\fg_{\bar1},\fg_{\bar1}]=0$. Assume that $\langle\cdot,\cdot\rangle$
is a symmetric bilinear form on $\fg$ satisfying:
\begin{equation}
    \langle \fg_{\bar0},\fg_{\bar1}\rangle=0, \quad \langle [a,u],v\rangle=-\langle u,[a,v]\rangle,
\end{equation}
for all $a\in\fg_{\bar0}$ and $u,v\in\fg$.
Define the affine Lie superalgebra
\begin{equation}
    \widehat\fg=\fg\otimes\BF[t,t^{-1}]\oplus \BF\bk,
\end{equation}
with Lie superbracket determined by:
\begin{align}
    &\big[a\otimes t^m,b\otimes t^n\big]=\big[a,b\big]\otimes t^{m+n}+m\delta_{m+n,0}\langle a,b\rangle \bk,\\
    &\big[a\otimes t^m,u\otimes t^n\big]=\big[a,u\big]\otimes t^{m+n},\\
    &\big[u\otimes t^m,v\otimes t^n\big]=\delta_{m+n+1,0}\langle u,v\rangle \bk,\\
    &\big[\bk,\widehat \fg\big]=0,
\end{align}
for $a,b\in\fg_{\bar0}$, $u,v\in \fg_{\bar1}$, $m,n\in\BZ$. The parity decomposition is:
\begin{equation}
    \widehat \fg_{\bar0}=\fg_{\bar0}\otimes \BF[t,t^{-1}]\oplus \BF\bk, \quad \widehat\fg_{\bar1}=\fg_{\bar1}\otimes\BF[t,t^{-1}].
\end{equation}
For $a\in\fg$, set
\begin{equation}
    a(z)=\sum_{n\in\BZ}(a\otimes t^n)z^{-n-1}\in\widehat\fg[[z,z^{-1}]].
\end{equation}
Then the defining relations imply
\begin{align}
    &[a(z_1),b(z_2)]=[a,b](z_2)z_1^{-1}\delta\bigg(\frac{z_2}{z_1}\bigg)+\langle a,b\rangle\partial_{z_2}^{(1)}\left(z_1^{-1}\delta\bigg(\frac{z_2}{z_1}\bigg)\right)\bk ,\label{affine untwisted local 1}\\
    &[a(z_1),u(z_2)]=[a,u](z_2)z_1^{-1}\delta\bigg(\frac{z_2}{z_1}\bigg), \label{affine untwisted local 2}\\
    &[u(z_1),v(z_2)]=\langle u,v\rangle z_1^{-1}\delta\bigg(\frac{z_2}{z_1}\bigg)\bk \label{affine untwisted local 3},
\end{align}
for $a,b\in \fg_{\bar0}$, $u,v\in \fg_{\bar1}$. We define a {\em degree} on $\widehat \fg$:
\begin{equation}
    \deg \bk=0,\quad \deg(a\otimes t^n)=-n,\quad \deg(u\otimes t^{n})=-n-\frac{1}{2},
\end{equation}
for any $a\in\fg_{\bar0}$, $u\in \fg_{\bar1}$, $n\in\BZ$. This makes
$\widehat \fg$ a $\frac{1}{2}\BZ$-graded Lie superalgebra.
Let $\widehat \fg_{(q)}$ denote the homogeneous subspace of
$\widehat \fg$ with degree $q$.
Then we have the triangular decomposition of
$\widehat \fg=\widehat \fg_+\oplus \widehat \fg_{(0)}\oplus \widehat \fg_-$, where
\begin{align*}
    &\widehat \fg_{(0)}=\fg_{\bar0}\oplus \BF \bk,\\
    &\widehat \fg_+=\bigoplus_{q\in\frac{1}{2}\BZ_+}\widehat \fg_{(q)}=\bigoplus_{n\in\BZ_+}(\fg_{\bar0}\otimes t^{-n})\oplus \bigoplus_{m\in\BZ_+}\fg_{\bar1}\otimes t^{-m},\\
    &\widehat \fg_-=\bigoplus_{q\in\frac{1}{2}\BZ_+}\widehat \fg_{(-q)}=\bigoplus_{n\in\BZ_+}(\fg_{\bar0}\otimes t^{n})\oplus \bigoplus_{m\in\BN}\fg_{\bar1}\otimes t^{m}.\\
\end{align*}
Let $\BF_\ell$ be the one dimensional $\widehat \fg_-\oplus\widehat \fg_{(0)}$-module on
which $\bk$ acts as scalar $\ell\in\BF$ and $\fg_{\bar0}\oplus\widehat\fg_-$ acts trivially.
The induced module
\begin{equation}
    V_{\widehat \fg}(\ell,0)=U(\widehat \fg)\otimes_{U(\widehat \fg_-\oplus\widehat\fg_{(0)})}\BF_\ell\cong
    U(\widehat \fg)/U(\widehat \fg)(\widehat \fg_-\oplus\widehat\fg_{(0)})
\end{equation}
admits a vertex superalgebra structure, see \cite{LM21}.

\begin{theorem}
There is a vertex superalgebra structure on $V_{\widehat \fg}(\ell,0)$,
which is uniquely determined by the condition that $\1=1\otimes 1$ is
the vacuum vector and
\begin{equation}
    Y(a,x)=a(x)\in(\End V_{\widehat \fg}(\ell,0))[[x,x^{-1}]],\quad \text{ for } a\in\fg.
\end{equation}
\end{theorem}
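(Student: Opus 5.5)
The plan is to apply Theorem~\ref{VA and twisted modules from local system} with $T=1$ (so $g=1$ and $\E_1(W)$ consists of ordinary fields) to $W=V_{\widehat\fg}(\ell,0)$. This $W$ is $\BZ_2$-graded by the parity decomposition of $\widehat\fg$, since $\fg_{\bar0}$ and $\fg_{\bar1}$ generate compatible even and odd parts of $U(\widehat\fg)$. Set
\[
S=\Span\{a(x)\mid a\in\fg\}\oplus\BF 1_W\subset(\End W)[[x,x^{-1}]].
\]

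\textbf{Step 1: $S$ is a local subspace of $\E_1(W)$.} Each $a(x)$ lies in $\E_1(W)$ because $W$ is a quotient of $U(\widehat\fg)$ by the left ideal generated by $\widehat\fg_-\oplus\widehat\fg_{(0)}$ (with $\bk$ acting as $\ell$). Indeed, for $w=b_1(n_1)\cdots b_k(n_k)\1$, commuting $a\otimes t^n$ to the right for $n\gg0$ produces only terms ending in positive modes acting on $\1$, so $(a\otimes t^n)w=0$. Locality comes from the commutator relations \eqref{affine untwisted local 1}--\eqref{affine untwisted local 3}: multiplying by $(z_1-z_2)^2$ kills every right-hand side, using $(z_1-z_2)^m\partial_{z_2}^{(n)}z_2^{-1}\delta(z_1/z_2)=0$ for $m>n$. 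Parity signs are built into the supercommutators.

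\textbf{Step 2: apply the local system theorem.} Theorem~\ref{VA and twisted modules from local system} gives a vertex superalgebra $\langle S\rangle$ with vacuum $1_W$, and $W$ becomes a faithful module with $Y_W(a(x),z)=a(z)$. By Proposition~\ref{relation untwited and twisted comm formula} with $T=1$, the products are $a(x)_0b(x)=[a,b](x)$ and $a(x)_1b(x)=\langle a,b\rangle\ell\,1_W$ (resp. $a(x)_0u(x)=[a,u](x)$, $u(x)_0v(x)=\langle u,v\rangle\ell\,1_W$). Hence the modes $a(x)_n$ on $\langle S\rangle$ satisfy the $\widehat\fg$-relations at level $\ell$, so $\langle S\rangle$ is a $\widehat\fg$-module via $a\otimes t^n\mapsto a(x)_n$. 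Moreover $a(x)_n1_W=0$ for $n\ge0$, because $a(x)$ has no singular part against the identity, and $\bk$ acts as $\ell$.

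\textbf{Step 3: identify $\langle S\rangle$ with $W$.} The universal property of the induced module gives a $\widehat\fg$-homomorphism $\psi:W\to\langle S\rangle$ with $\1\mapsto 1_W$. It is surjective, since $\langle S\rangle$ is spanned by iterated products $a^1(x)_{n_1}\cdots a^k(x)_{n_k}1_W$. For injectivity, consider the map $\langle S\rangle\to W$, $\phi(x)\mapsto \lim_{z\to0}\phi(z)\1$, i.e. $\phi_{-1}\1$. Check via the creation property that it sends $a^1(x)_{n_1}\cdots1_W$ to $a^1(n_1)\cdots\1$; then it is a left inverse of $\psi$. Transporting the structure gives $Y(a,x)=a(x)$, because $Y_W(\psi(a),z)=a(z)$ and $\psi(u_n v)=\psi(u)_n\psi(v)$.

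\textbf{Step 4: uniqueness.} Uniqueness follows since $\fg$ together with $\1$ generates $W$ and the iterate formula determines $Y$ on products.

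I expect Step 3, the injectivity and the compatibility of $Y$ with the transported module action, to be the main obstacle. The first thing I would try is evaluating at $\1$ through the creation property, as above, which avoids any characteristic-$p$ divisions.
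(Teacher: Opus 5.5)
The paper does not prove this theorem itself. It is imported from \cite{LM21}; the sentence before it says the induced module ``admits a vertex superalgebra structure, see \cite{LM21}''. Your proof is therefore a genuinely different, self-contained route: you derive the structure from the paper's own Theorem~\ref{VA and twisted modules from local system} at $T=1$, in the same spirit as the proof of Theorem~\ref{affine twisted module}. The key difference from that proof is that there it suffices for $\langle S\rangle$ to be a \emph{quotient} of $V_{\widehat\fg}(\ell,0)$, whose structure is already known, whereas here you need $\psi$ to be an isomorphism. Steps 1, 2 and 4 are fine. Surjectivity of $\psi$ uses only the standard, characteristic-free fact that $\langle S\rangle$ is spanned by the monomials $a^1(x)_{n_1}\cdots a^k(x)_{n_k}1_W$. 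Your route gains independence of the basic example from an external construction. It costs a vacuum-like-vector lemma that the paper never states.

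That lemma is exactly what your Step 3 needs, and ``via the creation property'' does not supply it. Let $\epsilon(\phi)$ be the $z^0$-coefficient of $Y_W(\phi,z)\1$.

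First, you need $Y_W(\phi,z)\1\in W[[z]]$ for \emph{every} $\phi\in\langle S\rangle$, not only for $a(x)$ and $1_W$. It suffices to show this property is preserved by all $n$-th products. Suppose it holds for $u,v$. Weak supercommutativity gives $(x_1-x_2)^NY_W(u,x_1)Y_W(v,x_2)\1=\pm(x_1-x_2)^NY_W(v,x_2)Y_W(u,x_1)\1$. The left side has no negative powers of $x_2$ and the right side none of $x_1$, so both lie in $W[[x_1,x_2]]$. Now use weak associativity at $\1$ with exponent $0$, namely $Y_W(Y(u,z_0)v,z_2)\1=Y_W(u,z_0+z_2)Y_W(v,z_2)\1$. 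It yields $z_0^NY_W(Y(u,z_0)v,z_2)\1\in W[[z_0,z_2]]$, i.e.\ $Y_W(u_mv,z_2)\1\in W[[z_2]]$ for all $m$.

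Second, take the $z_2^0$-coefficient of that associativity identity with $v=\phi$; only the constant term of $Y_W(\phi,z_2)\1$ contributes on the right. This gives $\sum_n\epsilon(u_n\phi)z_0^{-n-1}=Y_W(u,z_0)\epsilon(\phi)$.
\begin{itemize}
\item For $u=a(x)$ this says $\epsilon$ is $\widehat\fg$-linear. Hence $\epsilon\circ\psi$ is a $\widehat\fg$-endomorphism of the cyclic module $W$ fixing $\1$, so it is the identity.
\item For general $u$ it says $\epsilon=\psi^{-1}$ is a map of $\langle S\rangle$-modules. This is precisely what makes the transported structure satisfy $Y(w',z)=Y_W(\psi(w'),z)$, and hence $Y(a,z)=a(z)$.
\end{itemize}
Only integral binomial coefficients occur, so all of this is valid in characteristic $p$.
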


Let $\tau$ be an automorphism of $\fg$ which preserves the bilinear form
$\langle\cdot,\cdot\rangle$, that is,
\begin{equation*}
    \langle \tau a,\tau b\rangle=\langle a, b\rangle\qquad\text{for }a,b\in\fg.
\end{equation*}
Assume that $\tau$ has finite order $T$, which is coprime with $p$.
Then we have the eigenspace decomposition
\begin{equation}
    \fg=\fg_0\oplus \fg_1\oplus\cdots\oplus\fg_{T-1},
\end{equation}
where $\fg_i=\{a\in\fg\mid \tau a=\eta^i a\}$ for $i\in\BZ_T$,
with $\eta$ a primitive $T$-th root of unity.
Clearly, for any $i\in\BZ_T$, $\fg_i$ is a $\BZ_2$-graded
subspace of $\fg$ with parity decomposition $\fg_i=\fg_{\bar{0},i}\oplus \fg_{\bar{1},i}$.
Extend $\tau$ to $\widehat \fg$ by
\begin{equation}
    \tau(\bk)=\bk, \quad \tau(a\otimes t^n)=\tau(a)\otimes t^n.
\end{equation}
This defines an automorphism of $\widehat \fg$,
which induces an automorphism of the vertex superalgebra
$V_{\widehat \fg}(\ell,0)$, also denote by $\tau$, of order $T$.

To study the $\tau$-twisted $V_{\widehat \fg}(\ell,0)$-modules,
we define the twisted affine Lie superalgebra
\begin{equation}
    \widehat{\fg}[\tau]=\bigoplus_{i=0}^{T-1}\fg_i\otimes t^{\frac{i}{T}}\BF[t,t^{-1}]\oplus \BF\bk
\end{equation}
with super bracket relations:
\begin{align}
    &[a\otimes t^{m+\frac{i}{T}},b\otimes t^{n+\frac{j}{T}}]=[a,b]\otimes t^{m+n+\frac{i+j}{T}}+\langle a,b\rangle\big(m+\frac{i}{T}\big)\delta_{m+n+\frac{i+j}{T},0}\bk,\\
    &[a\otimes t^{m+\frac{i}{T}},v\otimes t^{n+\frac{j}{T}}]=[a,v]\otimes t^{m+n+\frac{i+j}{T}},\\
    &[u\otimes t^{m+\frac{i}{T}},v\otimes t^{n+\frac{j}{T}}]=\langle u,v\rangle\delta_{m+n+1+\frac{i+j}{T},0}\bk,\\
    &[\bk,\widehat{\fg}[\tau]]=0,
\end{align}
for $a\in \fg_{\bar0}\cap\fg_i$, $b\in \fg_{\bar0}\cap\fg_{j}$,
$u\in \fg_{\bar1}\cap\fg_i$, $v\in \fg_{\bar1}\cap\fg_{j}$.
The even subspace and the odd subspace are:
\begin{align}
    &\widehat \fg[\tau]_{\bar0}=\bigoplus_{i=0}^{T-1} \fg_{\bar0,i}\otimes t^{\frac{i}{T}}\BF[t,t^{-1}]\oplus \BF \bk,\\
    &\widehat \fg[\tau]_{\bar1}=\bigoplus_{i=0}^{T-1} \fg_{\bar1,i}\otimes t^{\frac{i}{T}}\BF[t,t^{-1}].
\end{align}

A $\widehat \fg[\tau]$-module $M$ is called a {\em locally truncated module}
if for any $u\in M$, $\big(\fg_i\otimes t^{n+\frac{i}{T}}\big)u=0$ for
all sufficiently large $n$.
If $M$ is additionally $\BZ_2$-graded $M=M_{\bar0}\oplus M_{\bar1}$ and
compatible with parity,
then $M$ is called a $\widehat \fg[\tau]$-{\em supermodule.}

For a $\widehat \fg[\tau]$-module $M$ and $a\in\fg_i$, define the formal series
\begin{equation}
    a_\tau(z)=\sum_{n\in\BZ}(a\otimes t^{n+\frac{i}{T}})z^{-n-1-\frac{i}{T}}\in \End M[[z^{\frac{1}{T}},z^{-\frac{1}{T}}]].
\end{equation}
Then for $a\in \fg_{\bar0,i}$, $b\in \fg_j$, $u\in \fg_{\bar1,k}$,
$v\in\fg_{\bar1}$, we have
\begin{align}
    &[a_\tau(z_1),b_\tau(z_2)]=[a,b]_\tau(z_2)z_1^{-1}\bigg(\frac{z_2}{z_1}\bigg)^{\frac{i}{T}}\delta\bigg(\frac{z_2}{z_1}\bigg)
    +\langle a,b\rangle \partial_{z_2}^{(1)}z_1^{-1}\bigg(\frac{z_2}{z_1}\bigg)^{\frac{i}{T}}\delta\bigg(\frac{z_2}{z_1}\bigg)\bk ,\label{twisted local 1}\\
    &[a_\tau(z_1),u_\tau(z_2)]=[a,u]_\tau (z_2)z_1^{-1}\delta\bigg(\frac{z_2}{z_1}\bigg)\bigg(\frac{z_2}{z_1}\bigg)^{\frac{i}{T}},\label{twisted local 3}\\
    &[u_\tau(z_1),v_\tau(z_2)]_+=\langle a,b\rangle z_1^{-1}\bigg(\frac{z_2}{z_1}\bigg)^{\frac{k}{T}}\delta\bigg(\frac{z_2}{z_1}\bigg)\bk.\label{twisted local 2}
\end{align}

If $M$ is a $\widehat \fg[\tau]$-supermodule,
then it is locally truncated if and only if every $a_\tau(z)$ is in $\E_T(M)$.
If $M$ is not a supermodule,
we construct a $\widehat \fg[\tau]$-supermodule $\tilde M$ in the
following way: $\tilde M=M\oplus M$, with $a\in \widehat\fg[\tau]_{\bar0}$ acting
as $(a,a)$ and $u\in \widehat\fg[\tau]_{\bar1}$ acts as $(u,-u)$.
Then $\tilde M$ is a $\widehat\fg[\tau]$-supermodule with the $\mathbb{Z}_2$-grading
\begin{equation}
    \tilde M_{\bar0}=\{(w,w)\mid w\in M\},\quad \tilde M_{\bar1}=\{(w,-w)\mid w\in M\}.
\end{equation}
We conclude that $M$ is a locally truncated $\widehat \fg[\tau]$-module if and only
if $\tilde M$ is locally truncated, which in turn is equivalent to each $a_\tau(z)$
on $\tilde M$ is an element in $\E_T(\tilde M)$.

We have the following theorem which states that the locally truncated
$\widehat \fg[\tau]$-modules correspond to the $\tau$-twisted
$V_{\widehat \fg}(\ell,0)$-modules.

\begin{theorem}\label{affine twisted module}
Let $\fg$ be a Lie superalgebra as before, and $\tau$ be an automorphism
of $V_{\widehat \fg}(\ell,0)$ of order $T$. Then any locally truncated
$\widehat \fg[\tau]$-module of level $\ell$ is a $\tau$-twisted $V_{\widehat \fg}(\ell,0)$-module.
\end{theorem}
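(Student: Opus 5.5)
Let $M$ be a locally truncated $\widehat\fg[\tau]$-module of level $\ell$. We first reduce to the super case: replacing $M$ by $\tilde M=M\oplus M$ as above, $\tilde M$ is a locally truncated supermodule. If $\tilde M$ is a $\tau$-twisted $V_{\widehat\fg}(\ell,0)$-module, then restricting to the first summand recovers $M$. This works because even elements act diagonally and odd elements act by $(u,-u)$, and these signs are compatible with the Jacobi identity once both sides are expanded in normally ordered products of generators. So we may assume $M$ is a supermodule. Then each $a_\tau(z)$ with $a\in\fg_i$ lies in $\E_T(M)^i$, with parity that of $a$. Put $S_0=\Span\{1_M, a_\tau(z)\mid a\in\fg\}$. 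The commutator relations \eqref{twisted local 1}--\eqref{twisted local 2} are multiplied by $(z_1-z_2)^2$, which kills $\delta$ and $\partial^{(1)}\delta$. Hence the generators are pairwise mutually local and $S_0$ is a local subspace. By Theorem \ref{VA and twisted modules from local system}, $\langle S_0\rangle$ is a vertex superalgebra with an automorphism (call it $g$, given by $a(x)\mapsto \eta^n a(x)$ on $\E_T(M)^n$). Moreover, $M$ is a faithful $g$-twisted $\langle S_0\rangle$-module with $Y_M(\alpha(x),z)=\alpha(z)$.

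Next we identify the structure of $\langle S_0\rangle$. We apply Proposition \ref{relation untwited and twisted comm formula} in the faithful direction to the module $M$ over $\langle S_0\rangle$. The twisted commutators \eqref{twisted local 1}--\eqref{twisted local 2} have exactly the form \eqref{twisted comm formula} with $w^{(0)}=[a,b]_\tau(x)$ and $w^{(1)}=\langle a,b\rangle\ell 1_M$ (similarly for the odd relations). Therefore, in $\langle S_0\rangle$, the untwisted commutator formula \eqref{comm formula} holds for the fields $Y(a_\tau(x),z)$. That is, $a_\tau(x)_0 b_\tau(x)=[a,b]_\tau(x)$, $a_\tau(x)_1 b_\tau(x)=\langle a,b\rangle\ell 1_M$ for even $a,b$, and $u_\tau(x)_0 v_\tau(x)=\langle u,v\rangle\ell 1_M$ for odd $u,v$ (with the appropriate parity). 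The higher products vanish. Consequently $t^n\otimes a\mapsto a_\tau(x)_n$ and $\bk\mapsto \ell$ make $\langle S_0\rangle$ a $\widehat\fg$-module of level $\ell$. Also, $\1=1_M$ is annihilated by $a_\tau(x)_n$ for $n\ge 0$, by the creation property in $\langle S_0\rangle$ (the fermionic shift in the odd bracket must be matched with the indexing convention of $\widehat\fg$). By the universal property of the induced module $V_{\widehat\fg}(\ell,0)$, there is a $\widehat\fg$-module map $\psi:V_{\widehat\fg}(\ell,0)\to\langle S_0\rangle$ with $\psi(\1)=1_M$ and $\psi(a)=a_\tau(x)$. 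Since $V_{\widehat\fg}(\ell,0)$ is generated by $\fg$ and $\psi$ intertwines $Y(a,z)$ with $Y(a_\tau(x),z)$ on generators, the standard argument (via the iterate formula and skew symmetry) shows that $\psi$ is a vertex superalgebra homomorphism.

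Finally, we check compatibility with $\tau$. We have $g\psi(a)=\eta^i a_\tau(x)=\psi(\tau a)$ for $a\in\fg_i$. Both $g\circ\psi$ and $\psi\circ\tau$ are vertex superalgebra homomorphisms agreeing on generators, so $\psi\tau=g\psi$. Pulling back the $g$-twisted $\langle S_0\rangle$-module structure along $\psi$ gives $Y_M(v,z)=Y_M(\psi(v),z)$, which makes $M$ a $\tau$-twisted $V_{\widehat\fg}(\ell,0)$-module. The grading condition in Definition \ref{twisted module} for $v\in V^r$ holds because $\psi(v)\in\E_T(M)^r$. We expect the main obstacle to be the second step. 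There one must verify that $\psi$ is a vertex superalgebra homomorphism in characteristic $p$ using only Hasse derivatives, and track carefully the indexing of odd modes and the signs, so that the universal property of $V_{\widehat\fg}(\ell,0)$ applies to $\langle S_0\rangle$.
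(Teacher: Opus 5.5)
Your proposal is correct and follows essentially the same route as the paper's proof. You reduce to the supermodule $\tilde M=M\oplus M$, form the local system generated by the $a_\tau(z)$ and $1_M$, and use the local-system theorem together with the faithful direction of the twisted/untwisted commutator proposition to make $\langle S_0\rangle$ a level-$\ell$ $\widehat\fg$-module with vacuum-like vector $1_M$. You then pull back along the resulting map from $V_{\widehat\fg}(\ell,0)$, exactly as the paper does.

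Your explicit check that $g\psi=\psi\tau$, and your use of a homomorphism $\psi$ rather than asserting a quotient, fill in details the paper leaves implicit. For the reduction step, a cleaner justification than your remark about signs and normally ordered products is that $M\oplus 0$ is stable under every $a_\tau(z)$, hence under all of $\langle S_0\rangle$, so it is a twisted submodule isomorphic to $M$.
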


\begin{proof}
Let $M$ be a locally truncated $\widehat \fg[\tau]$-supermodule of level $\ell$.
It follows from equation \eqref{twisted local 1}, \eqref{twisted local 3}
and \eqref{twisted local 2} that $\{a_\tau(z)\mid a\in \fg\}$ is a local subspace of $\E_T(M)$.
Let $V$ be the vertex superalgebra generated by $\{a_\tau(z)\mid a\in \fg\}$
and the identity operator $I(z)$. By Theorem \ref{VA and twisted modules from local system},
$M$ is a faithful $\tau$-twisted $V$-module with $Y_M(a_{\tau}(z),x)=a_{\tau}(x)$.
By Proposition \ref{relation untwited and twisted comm formula} and equations
\eqref{affine untwisted local 1}--\eqref{affine untwisted local 3},
\eqref{twisted local 1}--\eqref{twisted local 3}, $V$ becomes a
$\widehat g$-supermodule with $a_m$ (for $\BZ_2$-homogeneous $a\in \fg$, $m\in \BZ$)
represented by $a_\tau(z)_m$. Since $V$ is a lowest weight $\widehat \fg$-module of
level $\ell$ with a lowest weight vector $I(z)$, it is a quotient of $V_{\widehat \fg}(\ell,0)$.
This implies $M$ is a $\tau$-twisted $V_{\widehat \fg}(\ell,0)$-module.

If $M$ is a locally truncated $\widehat \fg[\tau]$-module, but not a supermodule,
we apply the same argument to $\tilde M$, which becomes a $\tau$-twisted
super $V_{\widehat \fg}(\ell,0)$-module. $V$ is generated by $a_\tau(z)$ for
all $a\in \widehat \fg$, whose action preserve the subspace $M$. We conclude
that $M$ is a $\tau$-twisted $V_{\widehat \fg}(\ell,0)$-submodule of $\tilde M$.
\end{proof}

Define a degree operator on the Lie superalgebra $\widehat \fg[\tau]$:
\begin{gather*}
    \deg(\fg_{\bar{0},r}\otimes t^{n+\frac{i}{T}})=-\bigg(n+\frac{i}{T}\bigg),\\
    \deg(\fg_{\bar{1},r}\otimes t^{n+\frac{i}{T}})=-\frac{1}{2}-\bigg(n+\frac{i}{T}\bigg),\\
    \deg\bk=0,
\end{gather*}
to make $\widehat \fg[\tau]$ a $\frac{1}{T_0}\BN$-graded Lie superalgebra.

\begin{remark}
$\widehat \fg[\tau]$ is actually $\frac{1}{T_0}\BN$-graded,
where $T_0$ is the order of the automorphism $\tau\sigma$,
for $\sigma$ the parity isomorphism of $\widehat \fg[\tau]$.
\end{remark}

Then there is a natural triangular decomposition of $\widehat \fg[\tau]$:
\begin{equation*}
    \widehat \fg[\tau]=\widehat \fg[\tau]^+\oplus \widehat \fg[\tau]^0\oplus \widehat \fg[\tau]^-,
\end{equation*}
where
\begin{gather*}
    \widehat \fg[\tau]^{+}=\text{span}\{a\otimes t^l\in\widehat \fg[\tau]\mid\deg a\otimes t^l>0\},\\
    \widehat \fg[\tau]^{-}=\text{span}\{a\otimes t^l\in\widehat \fg[\tau]\mid\deg a\otimes t^l<0\},\\
    \widehat \fg[\tau]^{0}=\text{span}\{a\otimes t^l\in\widehat \fg[\tau]\mid\deg a\otimes t^l=0\}.
\end{gather*}
We see that
\begin{equation*}
    \widehat \fg[\tau]^{\leq0}:=\widehat \fg[\tau]^{-}\oplus \BF \bk\oplus (\fg_{\bar{0},0}+\fg_{\bar{1},\frac{T}{2}}\otimes t^{-\frac{1}{2}})
\end{equation*}
is a direct sum of Lie subsuperalgebras.
We will denote $\fg_{\bar{0},0}+\fg_{\bar{1},\frac{T}{2}}
\otimes t^{-\frac{1}{2}}$ by $\fg^o$.

Let $U$ be a $\fg^o$-module. To make $U$ into a
$\widehat \fg[\tau]^{\leq0}$-module, we let $\bk$ acts as a
scalar $\ell\in\BF$ and $\widehat \fg[\tau]^{-}$ act trivially on $U$.
Denote the generalized Verma $\widehat \fg[\tau]$-module by
\begin{equation*}
    M_{\widehat \fg[\tau]}(\ell,U)=U(\widehat \fg[\tau])\otimes_{\widehat \fg[\tau]^{\leq 0}}U,
\end{equation*}
and set $\deg U=0$. It follows immediately that $M_{\widehat \fg[\tau]}(\ell,U)$
is a locally truncated $\widehat \fg[\tau]$-module.
Then $M_{\widehat \fg[\tau]}(\ell,U)$ becomes a
$\frac{1}{T_0}\BN$-graded $\tau$-twisted $V_{\widehat \fg}(\ell,0)$-module.

We next compute the twisted Zhu algebra $A_\tau(V_{\widehat \fg}(\ell,0))$.

\begin{theorem}\label{th affine zhu alg}
The twisted Zhu algebra $A_\tau(V_{\widehat \fg}(\ell,0))\cong U(\fg^o)$.
\end{theorem}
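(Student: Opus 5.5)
We will build algebra homomorphisms in both directions between $U(\fg^o)$ and $A_\tau(V_{\widehat \fg}(\ell,0))$ and show that they are mutually inverse, in the style of the standard Frenkel--Zhu / Dong--Li--Mason argument.

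\textbf{Step 1: a surjection $F:U(\fg^o)\to A_\tau(V)$.} Write $V=V_{\widehat \fg}(\ell,0)$. The elements $a(-1)\1$ have degree $1$ for $a\in\fg_{\bar0}$ and degree $\frac12$ for $a\in\fg_{\bar1}$. An element $a\in\fg_i$ lies in $V^{0*}$ exactly when $a(-1)\1$ is fixed by $\tau\sigma$. For even $a$ this means $a\in\fg_{\bar0,0}$; for odd $a$ it means $\tau a=-a$, i.e.\ $a\in\fg_{\bar1,\frac T2}$. So the zero-mode map $o$ sends $a(-1)\1$ to the zero-degree element of $\fg^o$, namely $a\otimes t^0$ or $u\otimes t^{-\frac12}$.

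Define $F$ on generators by $a\mapsto[a(-1)\1]$. We check it respects brackets using Lemma~\ref{zhu algebra equiv}(ii). The commutator $[a*_\tau b]$ is $\Res_z(1+z)^{\deg a-1}Y(a,z)b$.
\begin{itemize}
\item For $a,b\in\fg_{\bar0,0}$ this gives $[a,b]+\binom{0}{1}\langle a,b\rangle\ell=[a,b]$.
\item For odd $u,v$ it gives $\Res_z(1+z)^{-1/2}Y(u,z)v=u_0v=\langle u,v\rangle\ell\1$, which matches the bracket $[u\otimes t^{-1/2},v\otimes t^{-1/2}]=\langle u,v\rangle\bk$ in $\fg^o$.
\end{itemize}
Surjectivity follows by a filtration argument. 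By PBW, $V$ is spanned by monomials $a^1(-n_1)\cdots a^k(-n_k)\1$. Using Lemma~\ref{m>n in O}, Lemma~\ref{Dk cong} and Lemma~\ref{lem:Vr in O}, we show inductively on length and degree that each monomial is congruent modulo $O_\tau(V)$ to a $*_\tau$-polynomial in the generators $a(-1)\1$ with $a\in\fg^o$. The key identity is $a(-n-1)w\equiv$ (combination of $a(-1)\1 *_\tau w$ and lower terms) for $a\in V^{0*}$. For $a\notin V^{0*}$, the elements $a\circ_\tau^n w$ are used to eliminate $a(-n)w$ in favor of shorter monomials.

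\textbf{Step 2: injectivity via modules.} For any $\fg^o$-module $U$, the generalized Verma module $M_{\widehat \fg[\tau]}(\ell,U)$ is a $\frac1{T_0}\BN$-graded $\tau$-twisted $V$-module, by Theorem~\ref{affine twisted module}. Its degree-zero part is $U$, and $U\subset\Omega(M)$ because $U$ is annihilated by $\widehat\fg[\tau]^-$, which generates the positive modes. So $U$ is an $A_\tau(V)$-module on which $[a(-1)\1]$ acts as $o(a)=a$. Taking $U=U(\fg^o)$ as the left regular module, the composite $U(\fg^o)\to A_\tau(V)\to\End U$ is the regular representation, which is injective. Hence $F$ is injective, and therefore an isomorphism.

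\textbf{Main obstacle.} The delicate part is the surjectivity reduction in Step 1. In characteristic $p$ we cannot divide by integers, so the standard recursion must use Lemma~\ref{Dk cong} with Hasse operators and binomial coefficients in $\BD$. The twisted components $V^{r*}$ also need care: it must be checked that $a(-n)w$ for $a\notin V^{0*}$ reduces correctly via the expansion of $a\circ^n_\tau w$, whose leading coefficient $\binom{\deg a-1+r/T_0}{0}=1$ is invertible. We will carry out this step by induction on the PBW length, then on the total degree.
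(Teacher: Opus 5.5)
Your proposal is correct and follows essentially the same route as the paper: a degree induction using $*_\tau$, $\circ_\tau^n$ and Lemma~\ref{lem:Vr in O} shows that the classes $[a^{1}_{-1}\cdots a^{r}_{-1}\1]$ with $a^j\in\fg^o$ span; Lemma~\ref{zhu algebra equiv}(ii) gives a Lie superalgebra map extending to $U(\fg^o)\to A_\tau(V_{\widehat\fg}(\ell,0))$; and injectivity comes from the faithful action on the degree-zero part $U(\fg^o)$ of $M_{\widehat\fg[\tau]}(\ell,U(\fg^o))$. Your odd--odd computation $u_0v=\langle u,v\rangle\ell\1$ is more careful than the paper's, which writes $u_0v_{-1}\1=[u,v]_{-1}\1$ and drops the central term, and it makes explicit that $U(\fg^o)$ must be read with $\bk$ adjoined and set equal to $\ell$; also, $U\subset\Omega(M)$ is most cleanly justified by the $\frac{1}{T_0}\BN$-grading of $M$ rather than by $\widehat\fg[\tau]^-$ ``generating'' the positive modes.
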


\begin{proof}
We first claim that for any homogenepous element $u\in V_{\widehat \fg}(\ell,0)$,
the class $[u]$ can be written as a linear combination of elements of the form
\begin{equation*}
    [a^{i_1}_{-1}\cdots a^{i_r}_{-1}\1] \text{ in } A_\tau(V_{\widehat \fg}(\ell,0))
\end{equation*}
where $a^{i_j}\in \fg^o$ and $\deg a^{i_1}_{-1}\cdots a^{i_r}_{-1}\1\leq \deg u$.

By Lemma \ref{lem:Vr in O}, it suffices to verify this claim for
$u\in (V_{\widehat \fg}(\ell,0))^{0*}$. By the PBW theorem,
$V_{\widehat \fg}(\ell,0)$ is spanned by vectors of the form
\begin{equation*}
    b^{i_1}_{-n_1}\dots b^{i_k}_{-n_k}\1,
\end{equation*}
where $b^{i_l}\in \fg$ and $n_l\in \BZ_{>0}$.

We proceed by induction on $\deg(u)$.
Take $u=b^{i_1}_{-n_1}\dots b^{i_k}_{-n_k}\1\in (V_{\widehat \fg}(\ell,0))^{0*}$.
We compute $[b^{i_1}_{-n_1}\dots b^{i_k}_{-n_k}\1]$ in $A_\tau(V_{\widehat \fg}(\ell,0))$
by induction on $\deg(b^{i_1}_{-n_1}\dots b^{i_k}_{-n_k}\1)$.
If $\deg(u)=0$, then $[u]=k[\1]$ for some $k\in\BF$.
If $\deg(u)=\frac{1}{2}$, then $[u]=[b_{-1}\1]$ for some
$b\in \fg_{\bar{1},\frac{T}{2}}\subset \fg^o$.

Now assume $\deg(u)=1$. Then
\begin{equation}\label{deg u=1}
    [u]=[b_{-1}\1]+\sum_j [b^{(1_j)}_{-1}b^{(2_j)}_{-1}\1]
\end{equation}
where $ b\in \fg_{\bar{0},0}$, $b^{(1_j)}\in\fg_{\bar{1},r_j}$,
and $b^{(2_j)}\in\fg_{\bar{1},T-r_j}$.
For each summand $[b^{(1_j)}_{-1}b^{(2_j)}_{-1}\1]$
with $r_j\neq \frac{T}{2}$, note that $b^{(1_j)}\in (V_{\widehat \fg}(\ell,0))^{k*}$,
for some $k$. Using the definition of the twisted Zhu product \eqref{def circle},
we compute
\begin{equation*}
b^{(1_j)}\circ_g^0 b^{(2_j)}=\sum_{i\geq 0}\binom{-\frac{1}{2}+\frac{k}{T_0}}{i}b^{(1_j)}_{i-1}b^{(2_j)}=b^{(1_j)}_{-1}b^{(2_j)}_{-1}\1.
\end{equation*}
Hence these terms lie in $O_\tau(V_{\widehat \fg}(\ell,0))$ and vanish in the twisted Zhu algebra. Thus $[u]$ reduces to a linear combination of $[a_{-1}\1]$ and $[a^{(1)}_{-1}a^{(2)}_{-1}\1]$ with $a\in\fg_{\bar{0},0}$, and $a^{(1)},a^{(2)}\in\fg_{\bar{1},\frac{T}{2}}$.

Now assume inductively that the claim holds for all homogeneous
elements $u\in (V_{\widehat \fg}(\ell,0))^{0*}$ with $\deg u<m$.

Take any homogeneous element $u\in (V_{\widehat \fg}(\ell,0))^{0*}$ with $\deg(u)=m$,
for some $m>1$ and $m\in \frac{1}{2}\BZ$.
Then $u$ is a linear combination of elements of the form
$a_{-k}v$ for some $a\in\fg$, $k\in\BZ_{>0}$, $v\in V_{\widehat \fg}(\ell,0)$
with $\deg v< m$ due to the PBW theorem. We will show that the
claim is true for each element $a_{-k}v$.
Identify $a\in \fg$ with $a_{-1}\1$ in $V_{\widehat \fg}(\ell,0)$.
Following the definition of twisted Zhu algebra, we have
\begin{gather}\label{compute a*v}
a*_gv=\sum_{i\geq 0}\binom{\deg a}{i}a_{i-1}v, \quad a\in (V_{\widehat \fg}(\ell,0))^{0*}\cap \fg=\fg^{o},\\
\label{circ av}
a\circ_g^n v= \begin{cases}
\sum_{i\geq 0} \binom{\deg a}{i}a_{i-n-2}v, &a\in (V_{\widehat \fg}(\ell,0))^{0*}\cap \fg=\fg^{o},\\
\sum_{i\geq 0}\binom{\deg a-1-\frac{r}{T}}{i}a_{i-n-1}v, &a\in \fg\cap (V_{\widehat \fg}(\ell,0))^{r*}, r\neq0.
\end{cases}
\end{gather}
Our proof of the claim will be divided into several cases.

\emph{Case 1: $k=1$ and $ a\in \fg^o$.}
Since $\deg v<m$, by the induction assumption,
$[v]$ can be written as a linear combination of terms in form of
$[a^{i_1}_{-1}\cdots a^{i_r}_{-1}\1]$ in $A_\tau(V_{\widehat \fg}(\ell,0))$ for
some $a^{i_j}\in \fg^o$  with $\deg a^{i_1}_{-1}\cdots a^{i_r}_{-1}\1 \leq\deg v$. Without loss of generality,
we assume $[v]=[a^{i_1}_{-1}\cdots a^{i_r}_{-1}\1]$ for some $a^{i_j}\in \fg^o$.
By equation \eqref{compute a*v}, we obtain
\begin{align}
    [a_{-1}v]&=[a]*_g[a^{i_1}_{-1}\cdots a^{i_r}_{-1}\1] -\sum_{i\geq 1}\binom{\deg a}{i}[a_{i-1}v]\\
    &= \sum_{i\geq 0}\binom{\deg a}{i}[a_{i-1} a^{i_1}_{-1}\cdots a^{i_r}_{-1}\1] -\sum_{i\geq 1}\binom{\deg a}{i}[a_{i-1}v].
\end{align}
 Since
$\deg a_{i-1}v<m$ and $\deg a_{i-1} a^{i_1}_{-1}\cdots a^{i_r}_{-1}\1<m$ for $i\geq1$, $[a_{-1}v]$ can be written as a linear combination of $[a^{j_1}_{-1}\dots a^{j_k}_{-1}\1]$ for some $a^{j_l}\in \fg^o$ with $\deg a^{j_1}_{-1}\dots a^{j_k}_{-1}\1\leq \deg a_{-1}v $.

\emph{Case 2: $k=1$ and $a\in \fg\cap (V_{\widehat \fg}(\ell,0))^{r*}, r\neq0$.}
Following equation \eqref{circ av}, we have
\begin{equation}
    [a_{-1}v]=-\sum_{i\geq 1}\binom{\deg a-1-\frac{r}{T}}{i}a_{i-1}v, \quad a\in \fg\cap (V_{\widehat \fg}(\ell,0))^{r*}, r\neq0.
\end{equation}
Since $\deg a_{i-1}v<m$ for $i\geq 1$, $[a_{-1}v]$ can be written as a linear combination of $[a^{j_1}_{-1}\dots a^{j_k}_{-1}\1]$,
for $a^{j_l}\in \fg^o$ with $\deg a^{j_1}_{-1}\dots a^{j_k}_{-1}\1\leq \deg a_{-1}v $ in this case.

\emph{Case 3: $k\geq 2$.} We use equation \eqref{circ av} for suitable $n$ and have
\begin{align}
    [a_{-k}v]&=-\sum_{i\geq 1} \binom{\deg a}{i}[a_{i-k}v], \quad a=\fg^{o},\\
    [a_{-k}v]&=-\sum_{i\geq 1}\binom{\deg a-1-\frac{r}{T}}{i}[a_{i-k}v], \quad a\in \fg\cap (V_{\widehat \fg}(\ell,0))^{r*}, r\neq0,
\end{align}
where the right hand side can be written as a linear combination
of $[a^{j_1}_{-1}\dots a^{j_k}_{-1}\1]$ for $a^{j_l}\in \fg^o$,
since $\deg a_{i-k}v <m$ for $i\geq1$.
This completes the induction and proves the claim.

Now define a linear map $f: \fg^o\to A_\tau(V_{\widehat \fg}(\ell,0))$,
$a\mapsto a_{-1}\1$. By Lemma \ref{zhu algebra equiv}, for any $\BZ_2$-homogeneous
elements $u,v\in \fg^o$, we have
\begin{align*}
    f(u)*_g f(v)-(-1)^{|u||v|}f(v)*_gf(u)&=\Res_z(1+z)^{\deg u-1}Y(u,z)v\\
    &=u_0v_{-1}\1=[u,v]_{-1}\1+(-1)^{|u||v|}v_{-1}u_0\1\\
    &=[u,v]_{-1}\1=f([u,v]).
\end{align*}
This indicate $f$ is an homomorphism of Lie superalgebras,
which can be extended to a homomorphism of associative algebras
$\tilde f: U(\fg^o) \to A_\tau(V_{\widehat \fg}(\ell,0))$ by
sending $a^1\dots a^r$ to $[a^1_{-1}\dots a^r_{-1}\1]$.
This is a surjective map following the claim.

Since for any $\fg^o$-module $U$, we can construct a
$\frac{1}{T_0}\BZ$-graded $\tau$-twisted $V_{\widehat \fg}(\ell,0)$-module
\begin{equation*}
    M=M_{\widehat \fg[\tau]}(\ell,U),
\end{equation*}
where $M(0)=U$. Note that $M(0)$ is also an $A_\tau(V_{\widehat \fg}(\ell,0))$-module,
where $[a_{-1}\1]$ acts as $o(a)$ for $a\in \fg^o$.
Take $U=U(\fg^o)$. If $a\in \ker{\tilde f}$, then $aU(\fg ^o)=0$.
This implies $a=0$. Hence, $\tilde f$ is an isomorphism.
\end{proof}

If we take $U$ to be an irreducible $\fg^o$-module, then
$M_{\widehat \fg[\tau]}(\ell,U)$ has a unique maximal graded submodule.
We denote by $L_{\widehat \fg[\tau]}(\ell,U)$ the irreducible
$\frac{1}{T_0}\BN$-graded quotient module. Using a standard proof,
we can show that every irreducible $\frac{1}{T_0}\BN$-graded
$\widehat \fg[\tau]$-module is isomorphic to $L_{\widehat \fg[\tau]}(\ell,U)$
for some irreducible $\fg^o$-module $U$. Following Theorem \ref{affine twisted module},
these are irreducible $\frac{1}{T_0}\BN$-graded $\tau$-twisted
$V_{\widehat \fg}(\ell,0)$-modules.

Now we can conclude:

\begin{theorem} \label{affine and Lie super module equiv}
Let $\ell\in\BF$. The irreducible $\frac{1}{T_0}\BN$-graded $\widehat \fg[\tau]$-module
$L_{\widehat \fg[\tau]}(\ell,U)$ with $U$ an irreducible $\fg^o$-module form
a complete set of equivalence class representatives of irreducible
$\frac{1}{T_0}\BN$-graded $\widehat \fg[\tau]$-modules of level $\ell$.
Moreover, they also form a complete set of equivalence class representatives
of irreducible $\frac{1}{T_0}\BN$-graded $\tau$-twisted $V_{\widehat \fg}(\ell,0)$-modules.
\end{theorem}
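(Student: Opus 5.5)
The statement has two halves, and I will prove them in turn: the classification of irreducible $\frac{1}{T_0}\BN$-graded $\widehat\fg[\tau]$-modules of level $\ell$, and then the transfer of that classification to $\tau$-twisted $V_{\widehat\fg}(\ell,0)$-modules.

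\emph{Step 1: each $L_{\widehat\fg[\tau]}(\ell,U)$ is irreducible and distinct $U$ give distinct modules.}
Fix an irreducible $\fg^o$-module $U$. The generalized Verma module $M_{\widehat\fg[\tau]}(\ell,U)$ is $\frac{1}{T_0}\BN$-graded with degree-zero part $U$ (PBW). Any graded submodule meeting $U$ nontrivially contains $U$, because $U$ is irreducible over $\fg^o=\widehat\fg[\tau]^0$ modulo $\bk$. Hence the sum $J$ of all graded submodules with $J\cap U=0$ is the unique maximal proper graded submodule, and $L_{\widehat\fg[\tau]}(\ell,U)=M/J$ is irreducible with top $U$. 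Its top $\fg^o$-module recovers $U$, so $U\not\cong U'$ forces the quotients to be non-isomorphic.

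\emph{Step 2: every irreducible $\frac{1}{T_0}\BN$-graded module of level $\ell$ is of this form.}
Let $L=\bigoplus_n L(n)$ with $L(0)\neq0$. Then $\widehat\fg[\tau]^-$ kills $L(0)$ by degree, and $\fg^o$ preserves $L(0)$. The subspace $U(\widehat\fg[\tau])L(0)$ is a nonzero submodule, so by irreducibility
\[
L=U(\widehat\fg[\tau])L(0)=U(\widehat\fg[\tau]^+)L(0).
\]
Moreover $L(0)$ is an irreducible $\fg^o$-module: for a nonzero $\fg^o$-submodule $U'\subset L(0)$, the submodule $U(\widehat\fg[\tau]^+)U'$ has degree-zero part $U'$ and equals $L$, so $U'=L(0)$. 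The universal property of induction then gives a surjection $M_{\widehat\fg[\tau]}(\ell,L(0))\to L$. Its kernel is a maximal graded submodule meeting $L(0)$ trivially, so it equals $J$ and $L\cong L_{\widehat\fg[\tau]}(\ell,L(0))$. This proves the first assertion.

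\emph{Step 3: transfer to twisted $V_{\widehat\fg}(\ell,0)$-modules.}
Theorem \ref{affine twisted module} shows that each $L_{\widehat\fg[\tau]}(\ell,U)$, being locally truncated, is a $\tau$-twisted $V_{\widehat\fg}(\ell,0)$-module. Conversely, if $W$ is a $\tau$-twisted $V_{\widehat\fg}(\ell,0)$-module, the components of $Y_W(a,z)$ for $a\in\fg$ satisfy the twisted commutator relations by Proposition \ref{relation untwited and twisted comm formula} applied to \eqref{affine untwisted local 1}--\eqref{affine untwisted local 3}. So $W$ is a locally truncated $\widehat\fg[\tau]$-module of level $\ell$, with gradings matching.

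Submodules agree under this correspondence because $V_{\widehat\fg}(\ell,0)$ is generated by $\fg$. By the iterate formula \eqref{Twisted iterate}, a subspace stable under all $a_\tau(z)$ is stable under all $Y_W(v,z)$. Therefore irreducibility and isomorphism classes coincide in the two categories, and Steps 1--2 give the second assertion.

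Alternatively, for Step 3 one can combine Theorem \ref{th affine zhu alg}, which gives $A_\tau\cong U(\fg^o)$, with the bijection between simple $A_\tau$-modules and irreducible graded twisted modules from Section 4.

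\emph{Main obstacle.}
The hard part is the generation argument in Step 3 in the modular setting. One must check that submodules for the generators $a_\tau(z)$ are stable under the full vertex operators. This requires iterating the twisted iterate formula through the normally ordered products $a_{-n}$ that span $V_{\widehat\fg}(\ell,0)$. Only residues and binomial coefficients from $\BD$ appear there, so the argument goes through in characteristic $p$, but this is the step I would verify with care.
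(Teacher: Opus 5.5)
Your proposal is correct and follows essentially the same route as the paper. The paper likewise appeals to a ``standard proof'' for the classification of irreducible $\frac{1}{T_0}\BN$-graded $\widehat\fg[\tau]$-modules via the unique maximal graded submodule of $M_{\widehat\fg[\tau]}(\ell,U)$, and then transfers it to $\tau$-twisted $V_{\widehat\fg}(\ell,0)$-modules through Theorem~\ref{affine twisted module}. Your Step~3 also spells out the converse direction, which the paper leaves implicit but which completeness requires: a twisted module restricts to a locally truncated $\widehat\fg[\tau]$-module of level $\ell$ via Proposition~\ref{relation untwited and twisted comm formula}, and submodules and isomorphisms match by the iterate formula.
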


Now assume $\fg$ is a Lie superalgebra equipped with a $p$-mapping
$a\mapsto a^{[p]}$ (from $\fg_{\bar0}$ to $\fg_{\bar0}$)
such that $(\ad a)^p=\ad a^{[p]}$ on $\fg$ for $a\in \fg_{\bar0}$.
Such a Lie superalgebra $\fg$ is called {\em a restricted Lie superalgebra.}
Then $\widehat \fg$ is a restricted Lie superalgebra with a $p$-mapping
\begin{equation}
    \bk^{[p]}=\bk ,\quad (a\otimes t^n)^{[p]}=a^{[p]}\otimes t^{np}
\end{equation}
for $a\in\fg_{\bar0}$, $n\in\BZ$.

The twisted affine Lie superalgebra $\widehat \fg[\tau]$ is also restricted
following a similar proof as in \cite{LM20} under suitable assumptions.

\begin{lemma}
Let $\tau$ be an automorphism of a restricted Lie superalgebra
$\fg=\fg_{\bar0}\oplus \fg_{\bar1}$ of order $T$,
preserving the bilinear form $\langle \cdot,\cdot \rangle$ on $\fg$
and commuting with the $p$-mapping.
Then $\widehat \fg[\tau]$ is a restricted Lie superalgebra with:
\begin{equation}
    \bk^{[p]}=\bk ,\quad (a\otimes t^\alpha)^{[p]}=a^{[p]}\otimes t^{p\alpha}, \quad \text{ for $a\in \fg_{\bar0}$, $\alpha\in \frac{1}{T}\BZ$.}
\end{equation}
\end{lemma}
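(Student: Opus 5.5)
To prove the lemma, I would use the standard criterion (Jacobson's theorem in its super form): a Lie superalgebra $L=L_{\bar0}\oplus L_{\bar1}$ with a map $x\mapsto x^{[p]}$, defined on a basis $\{e_j\}$ of $L_{\bar0}$, is restricted if and only if $(\ad e_j)^p=\ad e_j^{[p]}$ on all of $L$ for each $j$. Then the $p$-map extends uniquely to a $p$-semilinear map satisfying the axioms. I would take as basis of $\widehat\fg[\tau]_{\bar0}$ the vector $\bk$ together with the elements $a\otimes t^{n+\frac{i}{T}}$, where $a$ runs over a basis of $\fg_{\bar0,i}$ and $n\in\BZ$, $i\in\BZ_T$.

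First, the formula makes sense. Since $\tau$ commutes with the $p$-map, $a\in\fg_{\bar0,i}$ gives $\tau(a^{[p]})=(\eta^ia)^{[p]}=\eta^{ip}a^{[p]}$, because the $p$-map is $p$-semilinear. Hence $a^{[p]}\in\fg_{\bar0,pi}$, and $a^{[p]}\otimes t^{p\alpha}$ with $p\alpha\in \frac{pi}{T}+\BZ$ lies in $\widehat\fg[\tau]$. For $\bk$, which is central, $(\ad\bk)^p=0=\ad\bk$.

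The main step is to verify $(\ad x)^p=\ad x^{[p]}$ for $x=a\otimes t^\alpha$ with $a\in\fg_{\bar0,i}$ and $\alpha\in\frac{i}{T}+\BZ$. I would test this against $y=b\otimes t^\beta$, splitting into $b$ even and $b$ odd, and against $\bk$, which is trivial.

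If $b$ is odd, the bracket $[x,y]=[a,b]\otimes t^{\alpha+\beta}$ has no central term. Iterating gives $(\ad x)^p y=(\ad a)^pb\otimes t^{p\alpha+\beta}=[a^{[p]},b]\otimes t^{p\alpha+\beta}=[x^{[p]},y]$.

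If $b$ is even, then for $k\ge1$ we have $(\ad x)^k y=(\ad a)^kb\otimes t^{k\alpha+\beta}+\delta_{k\alpha+\beta,0}\,\alpha\langle a,(\ad a)^{k-1}b\rangle\bk$. The central terms need care, since iteration only adds a central term at the last step (brackets with $\bk$ vanish). For $p\ge 3$, invariance of the form gives $\langle a,(\ad a)^{p-1}b\rangle=-\langle [a,a],(\ad a)^{p-2}b\rangle=0$. So $(\ad x)^py=(\ad a)^pb\otimes t^{p\alpha+\beta}$.

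On the other side, $[x^{[p]},y]=[a^{[p]},b]\otimes t^{p\alpha+\beta}+\delta_{p\alpha+\beta,0}\,p\alpha\langle a^{[p]},b\rangle\bk$. The central term vanishes because $p\alpha=0$ in $\BF$: $\alpha\in\BD$ and $\pi$ is a ring homomorphism, which is exactly where $\gcd(T,p)=1$ is used. Equality then follows from $(\ad a)^p=\ad a^{[p]}$ on $\fg$.

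The obstacle I expect is the bookkeeping in this even–even case. The point to check is that the accumulated central contributions are only the last-step term, and that this term is killed by invariance of the form. I would also confirm that the extension given by Jacobson's theorem on non-basis elements is consistent with the displayed formula for every $a\in\fg_{\bar0}$. This follows because the formula is $p$-semilinear in $a$ and satisfies the Jacobson formula with $s_i$ terms, as in \cite{LM20}; I would simply cite that argument.
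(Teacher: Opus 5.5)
Your argument is correct. The paper gives no proof beyond a pointer to \cite{LM20}, and your direct check is the standard way to carry it out: you verify $(\ad x)^p=\ad x^{[p]}$ on a homogeneous basis of $\widehat\fg[\tau]_{\bar0}$, using invariance of the form to kill the $\bk$-term on the left and $p\alpha=0$ in $\BF$ to kill it on the right, and then apply Jacobson's theorem. The step you leave to citation also holds here: a bracket of $b\otimes t^{k\alpha}$ with $c\otimes t^{l\alpha}$ ($k,l\ge 1$) has central part $k\alpha\,\delta_{(k+l)\alpha,0}\langle b,c\rangle\bk=0$, whether or not $\alpha=0$, so the $s_i$-terms get no central correction and the formula holds for every $a$, not only for basis vectors.
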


Let $\chi\in \fg_{\bar0}^*$ be a linear functional on $\fg_{\bar0}$.
Define $J_\chi$ as the $\widehat\fg$-submodule of $V_{\widehat \fg}(\ell,0)$ generated by
\begin{equation}
    \big(a(-1)^p-a^{[p]}(-p)-\chi(a)^p\big)\1
\end{equation}
for $a\in \fg_{\bar0}$, $m\in \BZ_+$. Then $J_{\chi}$ is an ideal of the vertex superalgebra
$V_{\widehat \fg}(\ell,0)$ \cite{LM22}, which coincide with the
ideal generated by $\big(a(-m)^p-a^{[p]}(-mp)-\delta_{m,1}\chi(a)^p\big)\1$
(see \cite{JLM} for detail).  
Thus $V_{\widehat \fg}^\chi(\ell,0)=V_{\widehat \fg}(\ell,0)/J_\chi$ is also a vertex superalgebra.

 We now consider $\chi \in\fg_{\bar0}^*$ such that $\chi(\tau a)=\chi(a)$
for $a\in \fg_{\bar0}$. Let $J_{\chi}$ be the ideal of $V_{\widehat \fg}(\ell,0)$
generated by $(a_{-1}^{p}-a_{-p}^{[p]}-\chi(a)^p)\1$ for $a\in\fg_{\bar0}$ as before.
It is easy to see that $\tau(J_{\chi})\subset J_{\chi}$.
Consequently, $\tau$ induces an automorphism of $V_{\widehat \fg}^\chi(\ell,0)$,
which is also denoted by $\tau$.

The following theorem can be proved similarly as Theorem 4.7 in \cite{LM20}.

\begin{theorem}\label{restricted VA module equiv}
Let $\fg$ be a restricted Lie superalgebra with a symmetric invariant bilinear
form and let $\tau$ be an automorphism of $\fg$ of order $T$ such that
\begin{equation}
    \langle \tau a,\tau b \rangle=\langle a, b \rangle, \quad \tau(a^{[p]})=(\tau a)^{[p]} \quad \text{ for $a,b \in \fg$.}
\end{equation}
Let $\ell\in \BF$ and $\chi\in \fg_{\bar0}^*$ such that $\chi (\tau a)=\chi(a)$
for $a\in \fg_{\bar0}$. Then every $\tau$-twisted $V_{\widehat \fg}^\chi(\ell,0)$-module
$W$ is a locally truncated $\widehat \fg[\tau]$-module of level $\ell$
with $a_\tau(z)=Y_W(a,z)$ for $a\in \fg$, satisfying the condition that
\begin{equation}\label{center act 0}
    a_{n+\frac{r}{T}}p-a_{p(n+\frac{r}{T})}^{[p]}-\delta_{n,-1}\chi(a)^p=0
\end{equation}
on $W$ for $a\in \fg_r$ with $0\leq r\leq T-1$ and for $n \in \BZ$.
On the other hand, for each locally truncated $\widehat \fg[\tau]$-module $W$ of
level $\ell$ satisfying the condition \ref{center act 0},
there is a $\tau$-twisted $V_{\widehat \fg}^\chi(\ell,0)$-module
structure $Y_W(\cdot, z)=a_{\tau}(z)$ on $W$ for $a\in \fg$.
\end{theorem}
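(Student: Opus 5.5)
For the first direction, I would begin with a $\tau$-twisted $V_{\widehat \fg}^\chi(\ell,0)$-module $W$ and view it, through the quotient map $V_{\widehat \fg}(\ell,0)\to V_{\widehat \fg}^\chi(\ell,0)$, as a $\tau$-twisted $V_{\widehat \fg}(\ell,0)$-module. Put $a_\tau(z)=Y_W(a,z)$ for $a\in\fg_r$. The untwisted relations \eqref{affine untwisted local 1}--\eqref{affine untwisted local 3} hold on $V_{\widehat \fg}(\ell,0)$, with $\bk$ acting as $\ell$. Proposition \ref{relation untwited and twisted comm formula}, in its faithfulness-free direction, then turns them into the twisted relations \eqref{twisted local 1}--\eqref{twisted local 2}. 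So $W$ is a $\widehat\fg[\tau]$-module of level $\ell$, and the truncation axiom makes it locally truncated.

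The remaining task in this direction is \eqref{center act 0}. Let $w_a=(a_{-1}^p-a^{[p]}_{-p}-\chi(a)^p)\1\in J_\chi$ for $a\in\fg_{\bar 0}$. Its image vanishes, so $Y_W(w_a,z)=0$. I would compute $Y_W(w_a,z)$ explicitly. The plan, following Theorem 4.7 of \cite{LM20}, is to iterate the twisted iterate formula \eqref{Twisted iterate}. Because $\tau$ commutes with $[p]$, I may take $a\in\fg_r$, in which case $a^{[p]}\in\fg_{pr}$. Since $(a_{-1})^{p}\1$ is built from an even field that commutes with itself up to a scalar central term, the $p$-fold normally ordered product $:a_\tau(z)^p:$ should reduce, in characteristic $p$, to $\sum_n a_{n+r/T}^p z^{-p(n+r/T)-p}$. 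The cross terms coming from the central extension carry multinomial coefficients that are divisible by $p$; this is the same mechanism that makes $x^p$ additive on commuting sums. Likewise $Y_W(a^{[p]}_{-p}\1,z)=\partial_z^{(p-1)}a^{[p]}_\tau(z)$, and its coefficients should match $a^{[p]}_{p(n+r/T)}$ after using the relation $\binom{-p m-1}{p-1}\equiv 1$ for the appropriate exponents. Comparing coefficients then gives \eqref{center act 0}, where the $\chi(a)^p$ term survives only at $n=-1$ (in the fixed component). For the twisted case I expect this normal-ordering identity to be the main obstacle. One has to check that the twisted expansion factors $\bigl(\frac{x_1-x_0}{x_2}\bigr)^{r/T}$ contribute no extra terms modulo $p$. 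I would try to handle this by first taking $a\in\fg_{\bar0,0}$ and then reducing the general case to it by means of the $p$-mapping formula $(\ad a)^p=\ad a^{[p]}$ on generators.

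For the converse direction, I would start from a locally truncated $\widehat\fg[\tau]$-module $W$ of level $\ell$ that satisfies \eqref{center act 0}. By Theorem \ref{affine twisted module} it is a $\tau$-twisted $V_{\widehat \fg}(\ell,0)$-module with $Y_W(a,z)=a_\tau(z)$. It then suffices to show that the annihilator ideal $\{v\mid Y_W(v,z)=0\}$ contains $J_\chi$. This annihilator is an ideal of $V_{\widehat \fg}(\ell,0)$; in the non-super case one passes to $\tilde W$ and uses faithfulness of the local-system vertex algebra. Since $J_\chi$ is generated by the elements $w_a$, I only need $Y_W(w_a,z)=0$. The computation from the first direction shows that the coefficients of $Y_W(w_a,z)$ are exactly the left-hand sides of \eqref{center act 0}, so they vanish by hypothesis. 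Hence $W$ factors through $V_{\widehat \fg}^\chi(\ell,0)$ and becomes a $\tau$-twisted module for it.
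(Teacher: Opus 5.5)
Your outline follows the skeleton the paper intends (it only cites Theorem 4.7 of \cite{LM20}): commutator relations via Proposition \ref{relation untwited and twisted comm formula}, then evaluate $Y_W(w_a,z)$, then for the converse use Theorem \ref{affine twisted module} and the annihilator ideal. The gap is the step that carries all the content. For even $a\in\fg_r$ you need $Y_W(a_{-1}^p\1,z)=\sum_{m\in\frac rT+\BZ}a_m^p z^{-pm-p}$, and you leave this open.

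The route you propose for it does not work. Reducing to $a\in\fg_{\bar0,0}$ via $(\ad a)^p=\ad a^{[p]}$ cannot do the job, because that relation only controls brackets. At best it shows that both sides have the same commutators with every $b_\tau(z)$, and a field commuting with $\widehat\fg[\tau]$ need not vanish. Nothing in it moves an eigenvector with $r\neq0$ into the fixed points. Your expectation that the twisted factors "contribute no extra terms" is also false at intermediate stages: already $Y_W(a_{-1}^2\1,z)={:}a_\tau(z)^2{:}-\binom{r/T}{2}\ell\langle a,a\rangle z^{-2}$. The corrections only cancel at the $p$-th power.

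The missing ingredient is a twisted normal-ordering formula. Expand $\bigl(\frac{x_1-x_0}{x_2}\bigr)^{r/T}$ in \eqref{Twisted iterate} and apply \eqref{twisted commutator}. For $u\in V^r$ with $0\le r<T$ this gives
\[
Y_W(u_{-1}v,z)={:}Y_W(u,z)Y_W(v,z){:}-\sum_{j\ge1}\binom{r/T}{j}z^{-j}\,Y_W(u_{j-1}v,z),
\]
where ${:}\ {:}$ places the modes $u_m$ with $m<0$ on the left and those with $m\ge0$ on the right.

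Now take $u=a$ and $v=a_{-1}^k\1$. Then $a_0v=0$ since $[a,a]=0$, $a_1v=k\ell\langle a,a\rangle a_{-1}^{k-1}\1$, and $a_jv=0$ for $j\ge2$. Put $c=\binom{r/T}{2}\ell\langle a,a\rangle$; induction on $k$ gives
\[
Y_W(a_{-1}^p\1,z)=\sum_{0\le k\le\frac{p-1}{2}}(-c)^k N_k\,z^{-2k}\,{:}a_\tau(z)^{p-2k}{:},\qquad N_k=\frac{p!}{(p-2k)!\,k!\,2^k}.
\]
Here $N_k$ counts ways to choose $k$ disjoint pairs among $p$ points, so $p\mid N_k$ for $k\ge1$. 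Moreover $c=0$ unless $2r\equiv0\pmod T$, since $\langle a,a\rangle=\eta^{2r}\langle a,a\rangle$.

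After that, ${:}a_\tau(z)^p{:}=a_\tau^-(z)^p+a_\tau^+(z)^p=\sum_m a_m^pz^{-pm-p}$, as you said. Your treatment of $\partial_z^{(p-1)}a^{[p]}_\tau(z)$ then completes the comparison of coefficients.

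A smaller point concerns the converse. The hypothesis only covers $\tau$-eigenvectors, so you need $J_\chi$ to be generated by the $w_a$ with $a$ in an eigenbasis of $\fg_{\bar0}$. This follows from $w_{a+b}=w_a+w_b$ and $w_{\lambda a}=\lambda^p w_a$. These come from Jacobson's formula, using that iterated brackets of $a_{-1},b_{-1}$ carry no central term. They do not follow from $\tau$ commuting with $[p]$.
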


Recall that the restricted universal enveloping algebra $\mathfrak{u(g)}$ of
a restricted Lie superalgebra is the quotient algebra of $U(\fg)$ modulo
the ideal generated by central elements $a^p-a^{[p]}$ for $a\in \fg_{\bar{0}}$.
Let $\fg$ be a restricted Lie superalgebra and $\tau$ be an
automorphism of $\fg$ which commutes with the $p$-mapping. Then
$\fg_{\bar{0},0}+\fg_{\bar{1},\frac{T}{2}}\otimes t^{-\frac{1}{2}}$
is also a restricted Lie superalgebra. We take $\chi=0$,
and notice that $V_{\widehat \fg}^0(\ell,0)$ is a $\frac{1}{2}\BZ$-graded vertex superalgebra.
The following theorem can be obtained by the same proof as Theorem 4.8 in \cite{LM20}.

\begin{theorem}
Assume the setting of Theorem \ref{restricted VA module equiv}.
If $U$ is an irreducible $\mathfrak{u}(\fg^o)$-module,
then $L_{\widehat \fg[\tau]}(\ell,U)$ is an irreducible
$\frac{1}{T_0}\BN$-graded $\tau$-twisted $V_{\widehat \fg}^0(\ell,0)$-module.
Conversely, if $W=\bigoplus_{n\in \frac{1}{T_0}\BN} W_n$ is an
irreducible $\frac{1}{T_0}\BN$-graded $\tau$-twisted $V_{\widehat \fg}^0(\ell,0)$-module
with $W_0\neq 0$, then $W_0$ is an irreducible $\mathfrak{u}(\fg^o)$-module
and $W$ is isomorphic to $L_{\widehat \fg[\tau]}(\ell,W_0)$.
\end{theorem}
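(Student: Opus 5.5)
The plan is to combine Theorem \ref{restricted VA module equiv} (with $\chi=0$) with Theorem \ref{affine and Lie super module equiv}, keeping track of the degree-zero part. The restricted enveloping algebra $\mathfrak{u}(\fg^o)$ is the quotient of $U(\fg^o)$ by the central elements $a^p-a^{[p]}$, $a\in\fg_{\bar0,0}$. The key observation is that, on the degree-zero space $M(0)=U$ of a $\frac{1}{T_0}\BN$-graded $\widehat\fg[\tau]$-module, the zero mode of $a\in\fg_{\bar0,0}$ is $a\otimes t^0$, which acts on $U$ as $a$. Likewise $a^{[p]}\in\fg_{\bar0,0}$, because $\tau$ commutes with the $p$-map, and its zero mode is $a^{[p]}\otimes t^0$.

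For the forward direction, let $U$ be an irreducible $\mathfrak{u}(\fg^o)$-module. By Theorem \ref{affine and Lie super module equiv}, $L=L_{\widehat\fg[\tau]}(\ell,U)$ is an irreducible $\frac{1}{T_0}\BN$-graded $\widehat\fg[\tau]$-module of level $\ell$, and it is locally truncated. By the converse half of Theorem \ref{restricted VA module equiv}, it then suffices to verify condition \eqref{center act 0} with $\chi=0$, i.e.\ that
\[
X_{a,\alpha}:=(a\otimes t^{\alpha})^p-a^{[p]}\otimes t^{p\alpha}
\]
acts as $0$ on $L$ for all $a\in\fg_{\bar0,r}$ and $\alpha\in\frac{r}{T}+\BZ$.

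The verification of this condition runs as follows.
\begin{enumerate}
\item Since $\widehat\fg[\tau]$ is restricted by the preceding Lemma, $X_{a,\alpha}$ is central in $U(\widehat\fg[\tau])$. (Here $\ad(a\otimes t^\alpha)^p=\ad(a^{[p]}\otimes t^{p\alpha})$ holds, and the level term in $\mathbf{k}$ contributes nothing to $p$-fold brackets.)
\item Hence the subspace annihilated by $X_{a,\alpha}$ is a submodule, so it is enough to show that $X_{a,\alpha}U=0$, since $U$ generates $L$.
\item If $\alpha\neq0$, then $X_{a,\alpha}$ is homogeneous of nonzero degree $-p\alpha$. If the degree is negative, $X_{a,\alpha}U=0$ for grading reasons. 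If it is positive, $X_{a,\alpha}U$ spans a degree-positive submodule $U(\widehat\fg[\tau])X_{a,\alpha}U$ of $L$ that does not meet $L(0)=U$; by irreducibility of $L$ this submodule is $0$.
\item If $\alpha=0$, then $r=0$, and $X_{a,0}=a^p-a^{[p]}$ acts on $U$ through $\mathfrak{u}(\fg^o)$, hence acts by $0$.
\end{enumerate}

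For the converse, let $W$ be an irreducible $\frac{1}{T_0}\BN$-graded $\tau$-twisted $V^0_{\widehat\fg}(\ell,0)$-module. We proceed in three steps.
\begin{enumerate}
\item The canonical map $V_{\widehat\fg}(\ell,0)\to V^0_{\widehat\fg}(\ell,0)$ makes $W$ an irreducible graded $\tau$-twisted $V_{\widehat\fg}(\ell,0)$-module, and a twisted-module submodule is the same as a $\widehat\fg[\tau]$-submodule since $V$ is generated by $\fg$.
\item By Theorem \ref{affine and Lie super module equiv}, $W\cong L_{\widehat\fg[\tau]}(\ell,U)$ for an irreducible $\fg^o$-module $U$, and necessarily $U=W_0$ (compare with the Proposition identifying $\Omega(M)=M(0)$).
\item By Theorem \ref{restricted VA module equiv}, relation \eqref{center act 0} holds on $W$. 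Restricting it to $W_0$ with $r=0$ and $n+\frac{r}{T}=0$ gives $a^p=a^{[p]}$ on $W_0$ for $a\in\fg_{\bar0,0}$. Thus $W_0$ is a $\mathfrak{u}(\fg^o)$-module, irreducible because it is already irreducible over $\fg^o$.
\end{enumerate}

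I expect the main obstacle to be step 1 of the forward direction, namely the centrality of $X_{a,\alpha}$ in the twisted, super setting. The odd part must be handled: one has to check $[X_{a,\alpha},u\otimes t^\beta]=0$ for odd $u$, which follows from $(\ad a)^p=\ad a^{[p]}$ on $\fg_{\bar1}$ and the fact that $[\fg_{\bar0},\fg_{\bar1}]$ produces no central term. The even central terms must also be controlled: $p$-fold brackets produce $\mathbf{k}$ only at the last step, where they are multiplied by $\alpha\langle a,[a,\dots]\rangle$ terms that vanish by invariance, as in \cite{LM20}. If a direct verification becomes messy, I would instead appeal to the proof of Theorem 4.8 in \cite{LM20} and only add the odd-part computation.
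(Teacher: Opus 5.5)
Your proposal is correct and is essentially the argument the paper defers to (the proof of Theorem~4.8 in \cite{LM20}): you combine Theorem~\ref{restricted VA module equiv} with $\chi=0$ and Theorem~\ref{affine and Lie super module equiv}, using that $(a\otimes t^{\alpha})^p-a^{[p]}\otimes t^{p\alpha}$ is central and homogeneous in $U(\widehat\fg[\tau])$. The nonzero-degree elements then vanish on the graded-irreducible module $L_{\widehat\fg[\tau]}(\ell,U)$ for grading reasons, and the degree-zero ones vanish because $U$ is a $\mathfrak{u}(\fg^o)$-module.
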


\section{(Twisted) Clifford algebras}

Let $U$ be a $d$-dimensional vector space equipped with a non-degenerate
symmetric bilinear form $\langle\cdot,\cdot\rangle$.
Set $\fg=\fg_{\bar0}\oplus \fg_{\bar1}$
with $\fg_{\bar0}=0$ and $\fg_{\bar1}=U$,
so that $\fg$ is an abelian Lie superalgebra.

Let $\tau$ be an automorphism of $\fg$ which preserves the
bilinear form $\langle\cdot,\cdot\rangle$, that is,
\begin{equation*}
    \langle\tau b,\tau b\rangle=\langle a,b\rangle, \quad \text{ for } a,b\in\fg.
\end{equation*}
Assume that $T$ is a positive integer relatively prime to $p$
such that $\tau^T=1$. Then
\begin{equation*}
    \fg=\fg_0\oplus \fg_1\oplus\cdots\oplus\fg_{T-1},
\end{equation*}
where $\fg_n=\{a\in\fg\mid \tau a=\eta^n a\}$ for $n\in\BZ$.

The twisted Lie superalgebra $\widehat{\fg}[\tau]$ is defined to be
$\widehat{\fg}[\tau]=\bigoplus_{i=0}^{T-1}\fg_i\otimes t^{\frac{i}{T}}\BF[t,t^{-1}]\oplus \BF\bk$
with the following defining relations :
\begin{gather*}
    [a\otimes t^{m+\frac{i}{T}},b\otimes t^{n+\frac{j}{T}}]=\langle a,b\rangle\delta_{m+n+1+\frac{i+j}{T},0}\bk,\\
    [\bk,\widehat{\fg}]=0,
\end{gather*}
for $a\in\fg_i$, $b\in\fg_j$.

For $a\in\fg_i$, set
\begin{equation*}
    a_{\tau}(x)=\sum_{n\in\BZ}(a\otimes t^{n+\frac{i}{T}})x^{-n-1-\frac{i}{T}}.
\end{equation*}
Then for $b\in\fg$, we have
\begin{equation*}
    [a_{\tau}(x_1),b_{\tau}(x_2)]=\langle a,b\rangle x_1^{-1}\delta\biggl(\frac{x_2}{x_1}\biggr)\biggl(\frac{x_2}{x_1}\biggr)^{\frac{i}{T}}\bk.
\end{equation*}
A $\widehat\fg[\tau]$-module $W$ on which ${\bf k}$ acts
as scalar $\ell\in \BF$ is said to be of {\em level} $\ell$.
A $\widehat\fg[\tau]$-module $W$ is called a
{\em locally truncated $\widehat\fg[\tau]$-module} if
$a_{\tau}(z)w\in W((z^{1/T}))$ for $a\in \fg,\ w\in W$.

Just as in \cite{Li96T} for the case of characteristic zero, we have:

\begin{proposition}\label{th:affine-Lie-VA-module}
Let $\ell\in \BF$.
Then every $\tau$-twisted $V_{\widehat\fg}(\ell,0)$-module $W$ is
a locally truncated $\widehat\fg[\tau]$-module of level $\ell$
with $a_{\tau}(z)=Y_W(a,z)$ for $a\in\fg$.
On the other hand, on every locally truncated $\widehat\fg[\tau]$-module $W$ of level $\ell$,
there is a $\tau$-twisted $V_{\widehat\fg}(\ell,0)$-module structure $Y_{W}(\cdot,z)$
which is uniquely determined by $Y_W(a,z)=a_{\tau}(z)$ for $a\in\fg$.
\end{proposition}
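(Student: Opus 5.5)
We prove the two directions separately. For the first, let $W$ be a $\tau$-twisted $V_{\widehat\fg}(\ell,0)$-module and set $a_\tau(z)=Y_W(a,z)$ for $a\in\fg$, identifying $a$ with $a_{-1}\1$. Since $\fg=\fg_{\bar1}$, each $a\in\fg_i$ lies in $V^i$ (the $\tau$-eigenspace). The twisted module definition therefore gives $Y_W(a,z)\in z^{-i/T}(\End W)((z))$, i.e.\ $Y_W(a,z)=\sum_{n\in\BZ}a(n+\tfrac iT)z^{-n-1-\frac iT}$, and the truncation condition gives $a_\tau(z)w\in W((z^{1/T}))$. For the bracket relations we compute in $V_{\widehat\fg}(\ell,0)$, for $a,b\in\fg$: $a_0b=0$, $a_jb=0$ for $j\ge1$, and $a_{-1}$ acting on $b$ produces only the vacuum term coming from the $t^{-1}$ pairing. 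Equation \eqref{affine untwisted local 3} then gives
\begin{equation*}
[Y(a,x_1),Y(b,x_2)]=\langle a,b\rangle\ell\, x_1^{-1}\delta\Big(\frac{x_2}{x_1}\Big),
\end{equation*}
so $w^{(0)}=\langle a,b\rangle\ell\1$ and $w^{(j)}=0$ for $j>0$ in \eqref{comm formula}. The ``moreover'' part of Proposition~\ref{relation untwited and twisted comm formula} (which needs no faithfulness) yields the twisted commutator $[a_\tau(x_1),b_\tau(x_2)]=\langle a,b\rangle\ell\,x_1^{-1}\delta(x_2/x_1)(x_2/x_1)^{i/T}$. Taking coefficients recovers exactly the defining relations of $\widehat\fg[\tau]$ with $\bk$ acting as $\ell$. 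Hence $W$ is a locally truncated $\widehat\fg[\tau]$-module of level $\ell$.

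For the converse, we follow the proof of Theorem~\ref{affine twisted module}. First suppose $W$ is a supermodule. The series $\{a_\tau(z)\mid a\in\fg\}\cup\{1_W\}$ span a local subspace of $\E_T(W)$, because the computed supercommutator is supported on $x_1=x_2$ and $(x_1-x_2)$ kills the delta function. Theorem~\ref{VA and twisted modules from local system} then gives a vertex superalgebra $\langle S\rangle$ of which $W$ is a faithful twisted module. Faithfulness allows the ``if and only if'' direction of Proposition~\ref{relation untwited and twisted comm formula} to be applied. This shows that the modes $a_\tau(z)_m$ satisfy the untwisted relations \eqref{affine untwisted local 3} on $\langle S\rangle$, with $\bk=\ell$, and that $a_\tau(z)_n 1_W=0$ for $n\ge0$. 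Hence $\langle S\rangle$ is a $\widehat\fg$-module of level $\ell$ generated by a vector annihilated by $\widehat\fg_-\oplus\widehat\fg_{(0)}$, and is therefore a quotient of $V_{\widehat\fg}(\ell,0)$. One must check that this quotient map is a vertex algebra homomorphism; this follows because both algebras are generated by the fields $a(z)$ and the map intertwines them. Pulling back along this map makes $W$ a $\tau$-twisted $V_{\widehat\fg}(\ell,0)$-module with $Y_W(a,z)=a_\tau(z)$. If $W$ is not a supermodule, we pass to $\tilde W=W\oplus W$ as in Section~5 and restrict to the invariant summand.

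Uniqueness holds because $V_{\widehat\fg}(\ell,0)$ is generated by $\fg$ as a vertex superalgebra. By the twisted iterate formula \eqref{Twisted iterate}, $Y_W(a_{-n}v,z)$ is determined by $Y_W(a,z)$ and $Y_W(v,z)$, so induction on PBW length fixes $Y_W$ on all of $V_{\widehat\fg}(\ell,0)$.

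The main obstacle is the step identifying $\langle S\rangle$ as a quotient of $V_{\widehat\fg}(\ell,0)$ compatibly with vertex operators, and in particular checking that $\tau$ corresponds to the automorphism $g$ of $\langle S\rangle$ furnished by Theorem~\ref{VA and twisted modules from local system}. The first thing to try here is to verify on the generators $a_\tau(x)\in\E_T(W)^i$ that $g$ acts by $\eta^i$, matching $\tau a=\eta^i a$.
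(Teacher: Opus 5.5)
Your proposal is correct and follows the route the paper intends. The paper gives no separate proof, deferring to Li's local-system argument, which is exactly the argument of Theorem~\ref{affine twisted module}. That is, you use the twisted commutator formula for the first direction, and for the converse you use the local subspace $\{a_\tau(z)\}\cup\{1_W\}$, its generated vertex superalgebra being a quotient of $V_{\widehat\fg}(\ell,0)$, and the $W\oplus W$ trick for non-super modules.

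One small slip to fix: in $V_{\widehat\fg}(\ell,0)$ you have $a_0b=\langle a,b\rangle\ell\1$, not $0$. It is the pairing $[a_0,b_{-1}]$ (indices $m+n+1=0$) that produces the vacuum term; $a_{-1}b$ is a genuine degree-one vector. This is consistent with your own $w^{(0)}=\langle a,b\rangle\ell\1$ via \eqref{uvw}.
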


For $a\in\fg_i$, $n\in\BZ$, define
\begin{equation*}
    \deg a_{n+\frac{i}{T}}=-n-\frac12-\frac{i}{T}.
\end{equation*}
If $T$ is odd, then $\widehat{\fg}[\tau]$ is $\frac{1}{2T}\BZ$-graded;
if $T$ is even, then $\widehat{\fg}[\tau]$ is $\frac{1}{T}\BZ$-graded.

Assume that $T$ is odd.
For $a\in\fg_i$, $n\in\BZ$, define
\begin{align*}
    \phi(a_n)=\begin{cases}
    a_{n+\frac{i}{T}}&i=0\\
    a_{n+\frac{i}{T}}&1\le i<\frac{T}{2}\\
    a_{n+\frac{i}{T}-1}&\frac{T}{2}<i\le T-1.
    \end{cases}
\end{align*}
Extending $\phi$ linearly,
we get a linear map $\phi:\widehat{\fg}\to \widehat{\fg}[\tau]$.
Clearly, $\phi$ is a Lie superalgebra isomorphism.

Assume that $T$ is even and $\dim \fg_{\frac{T}{2}}$ is even.
By the same argument of \cite{DZ}, we have
\begin{equation*}
    \fg_{\frac{T}{2}}=\sum_{i=1}^r \BF h_i+\sum_{i=1}^r \BF h^*_i
\end{equation*}
with $\langle h_i,h_j\rangle=\langle h^*_i,h^*_j\rangle=0$,
$\langle h_i,h^*_j\rangle=\delta_{i,j}$.
Set $\fg_{\frac{T}{2}}^+=\sum_{i=1}^r \BF h_i$ and
$\fg_{\frac{T}{2}}^-=\sum_{i=1}^r \BF h^*_i$.
Then $\fg_{\frac{T}{2}}=\fg_{\frac{T}{2}}^+\oplus\fg_{\frac{T}{2}}^-$.

\begin{lemma}
The restriction of $\langle\cdot,\cdot\rangle$ to $\fg_{\frac{T}{2}}$
is non-degenerate.
\end{lemma}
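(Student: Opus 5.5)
The argument is linear algebra with the eigenspace decomposition of $\tau$ and the invariance of the form. Since $\tau$ has order $T$ coprime to $p$, it acts semisimply on $\fg=U$, giving $\fg=\bigoplus_{n=0}^{T-1}\fg_n$ with $\fg_n$ the $\eta^n$-eigenspace. I will first show that $\langle \fg_i,\fg_j\rangle=0$ unless $i+j\equiv 0 \pmod T$. Then I will use non-degeneracy of the form on all of $\fg$ to conclude that $\fg_{\frac{T}{2}}$ pairs non-degenerately with itself.

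\emph{Orthogonality.} Take $a\in\fg_i$ and $b\in\fg_j$. Since $\tau$ preserves the form,
\begin{equation*}
\langle a,b\rangle=\langle \tau a,\tau b\rangle=\eta^{i+j}\langle a,b\rangle .
\end{equation*}
Hence $(1-\eta^{i+j})\langle a,b\rangle=0$. Because $\eta$ is a primitive $T$-th root of unity in $\BF$, which exists since $p\nmid T$, we have $\eta^{i+j}\neq 1$ whenever $i+j\not\equiv 0 \pmod T$. In that case $\langle a,b\rangle=0$.

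\emph{Non-degeneracy on $\fg_{\frac{T}{2}}$.} Suppose $a\in\fg_{\frac{T}{2}}$ satisfies $\langle a,b\rangle=0$ for all $b\in\fg_{\frac{T}{2}}$. Take any $c\in\fg$ and write $c=\sum_j c_j$ with $c_j\in\fg_j$. Then
\begin{equation*}
\langle a,c\rangle=\sum_j\langle a,c_j\rangle .
\end{equation*}
By the orthogonality step, $\langle a,c_j\rangle=0$ unless $\frac{T}{2}+j\equiv 0 \pmod T$, that is, unless $j=\frac{T}{2}$. So $\langle a,c\rangle=\langle a,c_{\frac{T}{2}}\rangle$, which is zero by hypothesis. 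Thus $a$ lies in the radical of $\langle\cdot,\cdot\rangle$ on $\fg$. Since that form is non-degenerate, $a=0$.

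The only point needing care is the orthogonality step. It requires $\eta$ to be a genuine primitive $T$-th root of unity in $\BF$, guaranteed by $\gcd(T,p)=1$, and it uses $\tau$-invariance in the form $\langle\tau a,\tau b\rangle=\langle a,b\rangle$; the statement's displayed "$\langle\tau b,\tau b\rangle$" is a typo for this. The rest is routine.
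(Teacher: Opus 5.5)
Your proof is correct and follows essentially the same route as the paper: $\tau$-invariance of the form gives $\langle \fg_i,\fg_{\frac{T}{2}}\rangle=0$ for $i\neq\frac{T}{2}$, so an element of $\fg_{\frac{T}{2}}$ that is orthogonal to $\fg_{\frac{T}{2}}$ is orthogonal to all of $\fg$, and it vanishes by non-degeneracy. Proving the slightly more general orthogonality $\langle\fg_i,\fg_j\rangle=0$ for $i+j\not\equiv 0 \pmod T$ is harmless, and you correctly noted that the invariance hypothesis should read $\langle\tau a,\tau b\rangle=\langle a,b\rangle$.
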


\begin{proof}
For $a\in\fg_{\frac{T}{2}}$, $b\in \fg_i$ with $i\ne \frac{T}{2}$, we have
$\langle a,b\rangle=\langle \tau a,\tau b\rangle=\eta^{i+\frac{T}{2}}\langle a,b\rangle$.
Since $i+\frac{T}{2}\ne T$, we see that $\langle a,b\rangle=0$,
thus $\langle \fg_i,\fg_{\frac{T}{2}}\rangle=0$ for $i\ne \frac{T}{2}$.

Assume that $a\in\fg_{\frac{T}{2}}$ such that $\langle a,\fg_{\frac{T}{2}}\rangle=0$.
Then $\langle a,\fg\rangle=0$. Since $\langle\cdot,\cdot\rangle$ is non-degenerate,
we must have $a=0$.
\end{proof}

For $a\in\fg_i$ with $i\ne \frac{T}{2}$ and $n\in\BZ$, define
\begin{align*}
    \phi(a_n)=\begin{cases}
    a_{n+\frac{i}{T}}&i=0\\
    a_{n+\frac{i}{T}}&1\le i<\frac{T}{2}\\
    a_{n+\frac{i}{T}-1}&\frac{T}{2}<i\le T-1,
    \end{cases}
\end{align*}
and for $a\in\fg_{\frac{T}{2}}^+ \cup \fg_{\frac{T}{2}}^-$, define
\begin{align*}
    \phi(a_n)=\begin{cases}
    a_{n+\frac12}&a\in\fg_{\frac{T}{2}}^+\\
    a_{n-\frac12}&a\in\fg_{\frac{T}{2}}^-.
    \end{cases}
\end{align*}
Then $\phi$
extends linearly to a Lie superalgebra isomorphism from $\widehat{\fg}$ to $\widehat{\fg}[\tau]$.

Assume that $T$ is even and $\dim \fg_{\frac{T}{2}}$ is odd.
As in the case of characteristic zero (see \cite{DZ}),
we can write
\begin{equation*}
    \fg_{\frac{T}{2}}=\sum_{i=1}^r \BF h_i+\sum_{i=1}^r \BF h^*_i+\BF e
\end{equation*}
with $\langle h_i,h_j\rangle=\langle h^*_i,h^*_j\rangle=0$,
$\langle h_i,h^*_j\rangle=\delta_{i,j}$, $\langle e,h_i\rangle=\langle e,h^*_j\rangle =0$,
$\langle e,e\rangle =2$.
Set $\fg_{\frac{T}{2}}^+=\sum_{i=1}^r \BF h_i$ and
$\fg_{\frac{T}{2}}^-=\sum_{i=1}^r \BF h^*_i$.

For $a\in\fg_i$ with $i\ne \frac{T}{2}$ and $n\in\BZ$, define
\begin{align*}
    \phi(a_n)=\begin{cases}
    a_{n+\frac{i}{T}}&i=0\\
    a_{n+\frac{i}{T}}&1\le i<\frac{T}{2}\\
    a_{n+\frac{i}{T}-1}&\frac{T}{2}<i\le T-1,
    \end{cases}
\end{align*}
and for $a\in\fg_{\frac{T}{2}}^+ \cup \fg_{\frac{T}{2}}^-\cup \BF e$, define
\begin{align*}
    \phi(a_n)=\begin{cases}
    a_{n+\frac12}&a\in\fg_{\frac{T}{2}}^+\\
    a_{n-\frac12}&a\in\fg_{\frac{T}{2}}^-\\
    a_{n+\frac12}&a\in\BF e, n\ge 0\\
    a_{n-\frac12}&a\in\BF e, n< 0.
    \end{cases}
\end{align*}
Then $\phi$ is a monomorphism of Lie superalgebras,
and $\widehat{\fg}[\tau]=\phi(\widehat{\fg})\oplus \BF e_{-\frac12}$.

Now we define $\widehat{\fg}[\tau]$-module structures on $V_{\widehat\fg}(\ell,0)$.
Let $\alpha\in\BF$ be such that $\alpha^2=\ell$.
Define $e_{-\frac12}$ acts as $\alpha$ on $V_{\widehat\fg}(\ell,0)_{\bar0}$
and acts as $-\alpha$ on $V_{\widehat\fg}(\ell,0)_{\bar1}$. For $v \in \phi(\widehat{\fg})$,
define $v$ acts as $\phi^{-1}(v)$ on $V_{\widehat\fg}(\ell,0)$.
Denote this $\widehat{\fg}[\tau]$-module by $V^+$.
Similarly, define $e_{-\frac12}$ acts as $-\alpha$ on $V_{\widehat\fg}(\ell,0)_{\bar0}$
and acts as $\alpha$ on $V_{\widehat\fg}(\ell,0)_{\bar1}$. For $v \in \phi(\widehat{\fg})$,
define $v$ acts as $\phi^{-1}(v)$ on $V_{\widehat\fg}(\ell,0)$.
Denote this $\widehat{\fg}[\tau]$-module by $V^-$.
Then both $V^+$ and $V^-$ are irreducible $\widehat{\fg}[\tau]$-modules.

\begin{theorem}
\begin{enumerate}[(i)]
\item If $T$ is odd, or if $T$ is even and $\dim \fg_{\frac{T}{2}}$ is even,
    then $V_{\widehat\fg}(\ell,0)$ has only one irreducible $\tau$-twisted module.
\item If $T$ is even and $\dim \fg_{\frac{T}{2}}$ is odd,
    then $V_{\widehat\fg}(\ell,0)$ has exactly two irreducible $\tau$-twisted modules.
\item Every $\tau$-twisted $V_{\widehat\fg}(\ell,0)$-module is completely reducible.
\end{enumerate}
\end{theorem}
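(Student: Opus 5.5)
The plan is to reduce everything, via Proposition \ref{th:affine-Lie-VA-module}, to locally truncated $\widehat{\fg}[\tau]$-modules of level $\ell$. Then the isomorphisms (or monomorphism) $\phi$ built above identify these with modules for the untwisted Clifford algebra $\widehat{\fg}$, possibly with one extra generator $e_{-\frac12}$. Throughout I assume $\ell\neq 0$; for $\ell=0$ the algebra is commutative and the statement must be read accordingly.

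\textbf{Step 1: the untwisted fact.} Every locally truncated $\widehat{\fg}$-module of level $\ell$ is a direct sum of copies of $V_{\widehat\fg}(\ell,0)$, and the latter is irreducible. For irreducibility, the Fock space $\Lambda(\fg\otimes t^{-1}\BF[t^{-1}])$ is irreducible because the pairing between annihilation and creation modes is nondegenerate, since $\ell\langle\cdot,\cdot\rangle$ is nondegenerate. For complete reducibility, take a locally truncated module $W$ and let $\Omega=\{w\in W\mid \widehat\fg_- w=0\}$. Local truncation, together with the fact that the annihilation modes anticommute and square to zero, makes any $w$ generate a finite-dimensional space under finitely many relevant annihilators. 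This gives $\Omega\neq 0$ and, more strongly, shows that $W$ is generated by $\Omega$. The map $V_{\widehat\fg}(\ell,0)\otimes\Omega\to W$ is then surjective, and it is injective because its source is a direct sum of irreducibles each of whose vacuum spaces embed. Hence $W\cong V_{\widehat\fg}(\ell,0)\otimes\Omega$. This is the standard Stone--von Neumann type argument; no characteristic issue arises because the relations involve only $\bk$ and contain no integer factors.

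\textbf{Step 2: the case $T$ odd, or $T$ even with $\dim\fg_{\frac T2}$ even.} Here $\phi:\widehat\fg\to\widehat\fg[\tau]$ is an isomorphism. I will check that $\phi$ sends $\widehat\fg_-$ onto the set of modes with negative degree, up to the choice of polarization in $\fg_{\frac T2}$. Consequently, local truncation for $\widehat\fg[\tau]$ corresponds to local truncation for $\widehat\fg$. Pulling back along $\phi$ and applying Step 1, every $\tau$-twisted module is a direct sum of copies of the single irreducible $V_{\widehat\fg}(\ell,0)^{\phi}$. This gives (i) and (iii) in this case.

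\textbf{Step 3: the case $T$ even with $\dim\fg_{\frac T2}$ odd.} Here $\widehat{\fg}[\tau]=\phi(\widehat\fg)\oplus\BF e_{-\frac12}$. The element $e_{-\frac12}$ supercommutes with $\phi(\widehat\fg)$ and satisfies $[e_{-\frac12},e_{-\frac12}]=2\ell\bk$, so $e_{-\frac12}^2=\ell$. On a module $W$, I take the operator $\theta=e_{-\frac12}\cdot(\text{parity twist})$. More concretely, via Step 1 write $W\cong V_{\widehat\fg}(\ell,0)\otimes\Omega$ as a $\phi(\widehat\fg)$-module; then $e_{-\frac12}$ acts as $\sigma\otimes\epsilon$ with $\epsilon^2=\ell$ on $\Omega$. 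Since $p>2$ and $\ell\neq0$, $\epsilon$ is diagonalizable with eigenvalues $\pm\alpha$. This yields $W\cong (V^+)^{\oplus m_+}\oplus(V^-)^{\oplus m_-}$, which proves (ii) and (iii).

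\textbf{Step 4: $V^+\not\cong V^-$.} Both $V^\pm$ are already irreducible, so it remains to show they are not isomorphic. The vacuum spaces of the two modules are one-dimensional, and $e_{-\frac12}$ acts on them by $\alpha$ and $-\alpha$ respectively. Since $\alpha\neq-\alpha$ in characteristic $p>2$, the modules are distinct.

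The main obstacle is Step 3, namely justifying the factorization $e_{-\frac12}=\sigma\otimes\epsilon$. The parity operator need not exist on a non-super module. I would handle this as in Theorem \ref{affine twisted module} by passing to $\tilde W$, where parity is available. There I would show that $\sigma e_{-\frac12}$ commutes with all of $\phi(\widehat\fg)$, so it acts on the multiplicity space $\Omega$, and then descend the decomposition back to $W$.
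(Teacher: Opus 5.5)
Your proposal is correct and is essentially the paper's argument. You transport along $\phi$ to locally truncated $\widehat\fg$-modules and use the untwisted complete reducibility $W\cong V_{\widehat\fg}(\ell,0)\otimes\Omega_W$, which the paper quotes from \cite[Lemma~4.11, Corollary~4.13, Theorem~4.17]{LM22} rather than reproving. You then split the vacuum space into the $\pm\alpha$-eigenspaces of $e_{-\frac12}$; your $\epsilon$ is just $e_{-\frac12}|_{\Omega_W}$, and your explicit hypothesis $\ell\neq 0$ is one the paper uses tacitly when it writes $\Omega_W=\Omega_W^+\oplus\Omega_W^-$.

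The obstacle you flag in Step 3 is not real, so the passage to $\tilde W$ is unnecessary. The parity operator $\sigma\otimes 1$ on $V_{\widehat\fg}(\ell,0)\otimes\Omega$ comes from the first factor, not from $W$. The paper sidesteps parity altogether by noting that $e_{-\frac12}$ supercommutes with $\phi(\widehat\fg)$ and hence preserves $\Omega_W$.
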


\begin{proof}
(i) and (ii) The proofs are the same as in \cite{DZ}.

(iii) We only prove the case when $T$ is even and $\dim \fg_{\frac{T}{2}}$ is odd,
and the proofs for the other cases are similar.
Let $W$ be a $\tau$-twisted $V_{\widehat\fg}(\ell,0)$-module.
Then $W$ is a locally truncated $\widehat{\fg}[\tau]$-module of level $\ell$.
Set $\widehat{\fg}[\tau]^-=\{\phi(a_n)\mid a\in\fg, n\in\BN\}$ and
\begin{equation*}
    \Omega_W=\{w\in W\mid \widehat{\fg}[\tau]^-w=0\}.
\end{equation*}
Since $\phi$ is a homomorphism of Lie superalgebras,
there is a $\widehat{\fg}$-module structure on $W$.
By the construction of $\phi$,
we see that $W$ is a locally truncated $\widehat{\fg}$-module of level $\ell$.
By \cite[Lemma~4.11]{LM22}, we have $\Omega_W\ne0$.
Note that $e_{-\frac12}$ preserves $\Omega_W$.
As $e_{-\frac12}^2=\ell$, we have $(\alpha-e_{-\frac12})(\alpha+e_{-\frac12})=0$ on $\Omega_W$.
Set $\Omega_W^+=(\alpha+e_{-\frac12})\Omega_W$ and $\Omega_W^-=(\alpha-e_{-\frac12})\Omega_W$.
Then $\Omega_W=\Omega_W^+\oplus \Omega_W^-$.

By \cite[Corollary~4.13]{LM22}, for nonzero $w\in \Omega_W^+$,
the $\widehat{\fg}$-submodule generated by $w$ is isomorphic to $V_{\widehat\fg}(\ell,0)$.
From the construction of $V^+$, the $\widehat{\fg}[\tau]$-submodule generated by $w$
is isomorphic to $V^+$.
Similarly, for nonzero $w\in \Omega_W^-$, the $\widehat{\fg}[\tau]$-submodule generated by $w$
is isomorphic to $V^-$.
Now by \cite[Theorem~4.17]{LM22}, $W$ is a direct sum of some copies of $V^+$ and $V^-$.
\end{proof}

\section{Twisted modules for the Neveu-Schwarz vertex superalgebra}

In this section, we study twisted modules for the modular Neveu-Schwarz vertex
superalgebra. Throughout this section, we assume that the base field $\mathbb{F}$
is algebraically closed of characteristic $p>3$.

Let us first recall the Neveu-Schwarz algebra $\mathcal{NS}$, which is a Lie
superalgebra with a basis $\{L_n, G_{n+\frac{1}{2}}\mid n\in\BZ\}\cup\{\bc\}$,
where $L_n\ (n\in\BZ)$ and $\bc$ are even, and $G_{n+\frac{1}{2}}\ (n\in\BZ) $ are odd.
The defining relations are as follows:
\begin{align}
    &[L_m,L_n]=(m-n)L_{m+n}+\frac{1}{2}\begin{pmatrix}m+1\\3\end{pmatrix}\delta_{m+n,0}\bc,\\
    &[L_m,G_{n+\frac{1}{2}}]=\left(\frac{m}{2}-n-\frac{1}{2}\right)G_{m+n+\frac{1}{2}},\\
    &[G_{m+\frac{1}{2}},G_{n-\frac{1}{2}}]=2L_{m+n}+\frac{1}{3}m(m+1)\delta_{m+n,0} \bc,\\
    &[\mathcal{NS},\bc]=0.
\end{align}
Equivalently, in terms of generating functions
\begin{equation*}
    L(z)=\sum_{m\in \mathbb{Z}}L_mz^{-m-2},\quad
    G(z)=\sum_{n\in \mathbb{Z}}G_{n+\frac{1}{2}} z^{-n-2},
\end{equation*}
The relations can be written as:
\begin{align}
    & [L(z_{1}),L(z_{2})]=z_{2}^{-1}\delta\left(\frac{z_{1}}{z_{2}}\!\right)
        \partial_{z_2}^{(1)}L(z_{2})-2L(z_2)\partial_{z_1}^{(1)}\left(z_{2}^{-1}\delta\left(\frac{z_{1}}{z_{2}}\right)\right)
        -\frac{1}{2}\partial_{z_1}^{(3)}\left(z_2^{-1}\delta\left(\frac{z_{1}}{z_{2}}\right)\right)\bc,\label{NS relation 1}\\
    &[L(z_{1}),G(z_{2})]
        =z_{2}^{-1}\delta\left(\frac{z_{1}}{z_{2}}\right){\partial_{z_{2}}^{(1)}}G(z_{2})
        +\frac{3}{2}{\partial_{z_1}^{(1)}}\left(z_2^{-1}\delta\left(\frac{z_{1}}{z_{2}}\right)\right)G(z_{2}),\label{NS relation 2}
\end{align}
and
\begin{equation}\label{NS relation 3}
    [G(z_{1}),G(z_{2})]
    =2z_{2}^{-1}\delta\left(\frac{z_{1}}{z_{2}}\right)L(z_{2})+\frac{2}{3}
    \partial_{z_{2}}^{(2)}\left(z_{2}^{-1}\delta\left(\frac
    {z_{1}}{z_{2}}\right)\right)\bc.
\end{equation}
Here the derivative $\partial^{(i)}$ acts via Hasse derivatives.

The Lie superalgebra $\mathcal{NS}$ is $\frac{1}{2}\BZ$-graded with degrees
\begin{equation}
    \deg \bc=0, \quad \deg L_m=-m, \quad \deg G_{m+\frac{1}{2}}=-m-\frac{1}{2} \quad \text{ for $m\in \BZ$.}
\end{equation}
This grading is compatible with the conformal weight grading induced by $L_0$.

For $i\in \frac{1}{2}\BZ$, we use $\mathcal{NS}_{(i)}$ to denote the
homogeneous subspace of $\mathcal{NS}$ with degree $i$. Then we have
\begin{equation}
    \mathcal{NS}_{(0)}=\BF L_0+\BF \bc,
\end{equation}
and
\begin{align}
    & \mathcal{NS}_{\bar 0}= \bigoplus_{n\in \BZ}\BF L_n\oplus \BF\bc=\bigoplus_{i\in\BZ} \mathcal{NS}_{(i)},\\
    & \mathcal{NS}_{\bar 1}= \bigoplus_{n\in \BZ}\BF G_{n+\frac{1}{2}}=\bigoplus_{i\in\BZ} \mathcal{NS}_{(i+\frac{1}{2})}.
\end{align}

We have the triangular decomposition
\begin{equation*}
    \mathcal{NS}=\mathcal{NS}_{+}\oplus
    \mathcal{NS}_{(0)}\oplus \mathcal{NS}_{-},
\end{equation*}
where
\begin{align}
    \mathcal{NS}_{\pm}
    =\bigoplus_{i\in \frac{1}{2}\BZ_{\pm}}\mathcal{NS}_{(i)}
    =\bigoplus_{n\in \BZ_{\pm}} (\BF L_{ n} + \BF G_{n \mp \frac{1}{2}}),\quad \mathcal{NS}_{0}=\mathcal{NS}_{(0)}.
\end{align}
As in \cite{LM22}, define the subalgebras:
\begin{align}
    & \mathcal{NS}_{\leq 0}=\mathcal{NS}_{-}+\mathcal{NS}_{(0)}=\mathcal{NS}_{-}+\BF L_0 +\BF \bc,\\
    &\mathcal{NS}_*=\mathcal{NS}_{-}+\mathcal{NS}_{(0)}+\BF L_{-1}+\BF G_{-\frac{1}{2}}.
\end{align}
They can be decomposed into direct sums of Lie superalgebras:
\begin{align}
    & \mathcal{NS}_{\leq 0}=(\mathcal{NS}_{-}+\BF L_0) \oplus \BF \bc,\\
    &\mathcal{NS}_*=(\mathcal{NS}_{-}+\BF L_0+\BF L_{-1}+\BF G_{-\frac{1}{2}})\oplus \BF \bc.
\end{align}
These subalgebras play the role of Borel-type subalgebras for the induced module construction.

For $c\in\BF$, define the one dimensional $\mathcal{NS}_*$-module $\BF_c$ on
which $\mathcal{NS}_{-}+\BF L_0+\BF L_{-1}+\BF G_{-\frac{1}{2}}$ act trivially and
$\bc$ acts as scalar multiplication by $c$. As shown in in \cite{LM22},
the induced $\mathcal{NS}$-module
\begin{equation}\label{NS VSA}
    V_{\mathcal{NS}}(c,0)=U(\mathcal{NS})\otimes _{U(\mathcal{NS}_*)}\BF_c
\end{equation}
admits a natural vertex superalgebra structure.

\begin{proposition}
There exists a vertex superalgebra structure on $V_{\mathcal{NS}}(c,0)$,
with the vacuum vector $\1=1\otimes 1$ and
\begin{equation*}
Y(\omega,z)=L(z),\quad Y(\tau,z)=G(z),
\end{equation*}
where $\omega=L_{-2}\1$ is even and $\tau=G_{-\frac{3}{2}}\1$ is odd.
Moreover, $Y(L_{-1}v,z)=\frac{d}{dz}Y(v,z)$, for all $v\in V$.
\end{proposition}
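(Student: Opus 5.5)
The natural route is the local-system machinery of Section 3, combined with the universal property of the induced module $V_{\mathcal{NS}}(c,0)$.

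\textbf{Step 1: build a local system.} Let $W=V_{\mathcal{NS}}(c,0)$, which is $\BZ_2$-graded since $\mathcal{NS}_*$ is a graded subalgebra and $\BF_c$ is even. Consider the series $L(z)$ and $G(z)$ as operators on $W$. Because $W$ is $\frac12\BN$-graded (this follows from the PBW theorem applied to $U(\mathcal{NS}_+)$), for each $w$ one has $L_mw=0$ and $G_{n+\frac12}w=0$ when $m,n\gg0$. Hence $L(z),G(z)$ and $1_W$ lie in $\E_1(W)$, which is the case $T=1$.

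\textbf{Step 2: check locality.} The relations \eqref{NS relation 1}--\eqref{NS relation 3} show that each bracket is a finite sum of Hasse derivatives of $z_2^{-1}\delta(z_1/z_2)$, at most of order $3$. Multiplying by $(z_1-z_2)^4$ therefore kills these brackets. So $\{L(z),G(z),1_W\}$ is a local subspace, with $G$ odd, and Theorem~\ref{VA and twisted modules from local system} gives a vertex superalgebra $\langle S\rangle$ with $W$ as a faithful module.

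\textbf{Step 3: identify $\langle S\rangle$ as an $\mathcal{NS}$-module.} Proposition~\ref{relation untwited and twisted comm formula} with $T=1$ turns the commutator formulas into brackets of the form $L(z)_j$, $G(z)_j$ on $\langle S\rangle$. It follows that $L_m\mapsto L(z)_{m+1}$, $G_{n+\frac12}\mapsto G(z)_{n+1}$, $\bc\mapsto c$ defines an $\mathcal{NS}$-module structure on $\langle S\rangle$.

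Next check that $1_W$ is annihilated by $\mathcal{NS}_-$, $L_0$, $L_{-1}$ and $G_{-\frac12}$. This holds because $a(z)_n1_W=0$ for $n\ge0$, which gives $L_m1_W=0$ for $m\ge-1$ and $G_{n+\frac12}1_W=0$ for $n\ge-1$. The universal property of the induced module then yields an $\mathcal{NS}$-homomorphism $\psi:V_{\mathcal{NS}}(c,0)\to\langle S\rangle$ with $\1\mapsto1_W$, $\omega\mapsto L(z)$ and $\tau\mapsto G(z)$.

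\textbf{Step 4: transport the structure back.} Evaluation $\theta:\langle S\rangle\to W$, $a(z)\mapsto a(z)\1$ (that is, the constant term), is well defined by the creation property of $Y_W$, which is built into the local-system construction. The composite $\theta\circ\psi$ is an $\mathcal{NS}$-homomorphism $W\to W$ fixing $\1$; since $W$ is generated by $\1$, it is the identity. Hence $\psi$ is injective.

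$\psi$ is also surjective, because $\langle S\rangle$ is spanned by iterates $L(z)_{n_1}\cdots G(z)_{n_k}1_W$, and these are exactly images of PBW monomials. So $\psi$ is an isomorphism, and we transport the vertex superalgebra structure to $V_{\mathcal{NS}}(c,0)$. Under this structure $Y(\omega,z)=L(z)$ and $Y(\tau,z)=G(z)$, since $Y_W(a(x),z)=a(z)$.

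\textbf{Step 5: the derivative formula.} $L_{-1}=\omega_0$ acts on $\langle S\rangle$ as $L(z)_0$. On $\langle S\rangle$, the component $a(z)_{-2}1_W$ equals $\partial_z a(z)$, so it suffices to show that $L_{-1}v$ corresponds to $\D^{(1)}v=v_{-2}\1$. Using skew symmetry and the commutator formula, $[L_{-1},Y(v,z)]=Y(\omega_0v,z)$ and $L_{-1}\1=0$. Then $\omega_0v=\omega_0v_{-1}\1=(\omega_0v)_{-1}\1$, and applying \eqref{commutator formula} with $\omega_1\omega=2\omega$ identifies $L_{-1}$ with $\D^{(1)}$. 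This is the standard computation $\omega_0=\D^{(1)}$, which holds because $L_{-1}\1=0$ and $[L_{-1},Y(v,z)]=\partial_zY(v,z)$ follows from the $L$--$v$ commutator; the latter is derived from \eqref{NS relation 1} and \eqref{NS relation 2} for the generators and extended via the conjugation formula. The conclusion is $Y(L_{-1}v,z)=Y(\D^{(1)}v,z)=\partial_z^{(1)}Y(v,z)=\frac{d}{dz}Y(v,z)$.

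\textbf{Main obstacle.} I expect the obstacle to be Step 5: in characteristic $p$ one must verify that $L_{-1}$ agrees with $\D^{(1)}$, not merely with some derivation. I would first check this on the generators $\omega$ and $\tau$ using the explicit modes, and then extend by the derivation property of both operators.
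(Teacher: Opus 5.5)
The paper gives no proof of this proposition; it simply cites \cite{LM22} for the vertex superalgebra structure on $V_{\mathcal{NS}}(c,0)$. Your route is the standard local-system argument of \cite{Li96T}: take the local system $\{L(z),G(z),1_W\}$ on $W=V_{\mathcal{NS}}(c,0)$ itself, identify $\langle S\rangle$ as a lowest-weight $\mathcal{NS}$-module, and compare it with $W$. The paper uses the same mechanism for Theorem~\ref{affine twisted module} and for locally truncated Ramond modules. There, however, the generated algebra only has to be a \emph{quotient} of the universal one, whereas you need $\psi$ to be injective. Your route has a real advantage in characteristic $p$: all the operators $\D^{(n)}$ come from Hasse derivatives on $\langle S\rangle$, so you never have to rebuild them from $L_{-1}$, which is impossible for $n\geq p$.

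\textbf{Step 4.} The one real gap is here. A module has no creation property, so ``$a(z)\1\in W[[z]]$ by the creation property of $Y_W$'' is not a valid reason. More importantly, you never argue that $\theta$ is an $\mathcal{NS}$-homomorphism, and $\theta\circ\psi=\mathrm{id}$ rests entirely on that. Both facts follow from $\1\in W$ being vacuum-like. First, $L(z)\1,G(z)\1\in W[[z]]$, since $L_m\1=0$ for $m\geq-1$ and $G_{n+\frac12}\1=0$ for $n\geq-1$. Next, if $a(z)\1,b(z)\1\in W[[z]]$, then weak associativity \eqref{associativity} with $T=1$, $k=0$, $w=\1$ gives
\begin{equation*}
Y_W(Y(a,x_0)b,x_2)\1=Y_W(a,x_0+x_2)Y_W(b,x_2)\1\in W((x_0))[[x_2]].
\end{equation*}
Comparing coefficients of $x_0^{-n-1}$ gives $(a_nb)(z)\1\in W[[z]]$, so $\theta$ is defined on all of $\langle S\rangle$. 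Setting $x_2=0$ gives $\theta(Y(a,x_0)b)=Y_W(a,x_0)\theta(b)$, so $\theta$ intertwines the $\mathcal{NS}$-actions, and then $\theta\circ\psi=\mathrm{id}$ does follow.

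\textbf{Step 5.} The sentence about skew symmetry and $\omega_1\omega=2\omega$ proves nothing. Use the plan from your last paragraph instead. By \eqref{commutator formula} and the conjugation formula, $\omega_0=L_{-1}$ and $\D^{(1)}$ are both even derivations of all $n$-products that kill $\1$. They agree on the generators, since $L_{-1}L_{-2}\1=L_{-3}\1=\omega_{-2}\1$ and $L_{-1}G_{-\frac32}\1=G_{-\frac52}\1=\tau_{-2}\1$. Hence $L_{-1}=\D^{(1)}$ on $V_{\mathcal{NS}}(c,0)$, and $Y(L_{-1}v,z)=\partial_z^{(1)}Y(v,z)$.
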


Let $\sigma$ be the canonical order $2$ automorphism of the vertex superalgebra $V_{\mathcal{NS}}(c,0)$.
Clearly, $\sigma$ is the unique automorphism of the vertex superalgebra $V_{\mathcal{NS}}(c,0)$ preserving the $\frac{1}{2}\BN$-grading.
In order to study $\sigma$-twisted $V_{\mathcal{NS}}(c,0)$-modules,
we need to introduce the Ramond algebra ${\mathcal{R}}$,
the Lie superalgebra with a basis $\{L_m,
F_m\mid m\in\BZ\}\cup\{\bc\}$ and with the following defining relations:
\begin{align}
    &[L_m,L_n]=(m-n)L_{m+n}+\frac{1}{2}\begin{pmatrix}m+1\\3\end{pmatrix}\delta_{m+n,0}\bc,\\
    & [L_m, F_n]=(\frac{m}{2}-n)F_{m+n},\\
    & [F_m,F_n]=2L_{m+n}+{\frac{1}{3}}(m^2-\frac{1}{4})\delta_{m+n,0}\bc,\\
    & [L_m,\bc]=[F_m,\bc]=0,
\end{align}
for $m,n\in \BZ$. Set
\begin{equation*}
    L(z)=\sum_{m\in \BZ}L_m z^{-m-2},
    \quad F(z)=\sum_{n\in \BZ}F_nz^{-n-\frac{3}{2}}.
\end{equation*}
This can be rewritten in terms of generating functions:
\begin{align}
    & [L(z_{1}),L(z_{2})]=z_{2}^{-1}\delta\left(\frac{z_{1}}{z_{2}}\right)
    \partial_{z_2}^{(1)}L(z_{2})-2L(z_2)\partial_{z_1}^{(1)}\left(z_{2}^{-1}\delta\left(\frac{z_{1}}{z_{2}}\right)\right)
    -\frac{1}{2}\partial_{z_1}^{(3)}\left(z_2^{-1}\delta\left(\frac{z_{1}}{z_{2}}\right)\right)\bc,\label{twisted local Ramond 1} \\
    &[L(z_{1}), F(z_{2})]=z_{2}^{-1}\delta\left(\frac{z_{1}}
    {z_{2}}\right)F(z_{2})
    +\frac{3}{2}{\partial_{z_{2}}^{(1)}}\left(z_{1}^{-1}\delta\left(
    \frac{z_{2}}{z_{1}}\right)\right)
    F(z_{2}),\label{twisted local Ramond 2}\\
    &[F(z_{1}),F(z_{2})]=2z_1^{-1}\delta\left(\frac{z_{2}}{z_{1}}\right)\left(
    \frac{z_{2}}{z_{1}}\right)^{\frac{1}{2}}L(z_{1})+\frac{2}{3}\partial_{z_{2}}^{(2)}\left(z_1^{-1}\delta\left(\!\frac{z_{2}}{
    z_{1}}\right)\left(\frac{z_{2}}{z_{1}}\right)^{\frac{1}{2}}\right).\label{twisted local Ramond 3}
\end{align}

Let $M$ be any locally truncated supermodule for the Ramond algebra
with central charge $c$, that is, $L_n$ and $F_n$ act trivially on
each vector $w\in M$ for all sufficiently large.
Then $\{ L(z), F(z), I(z)\}$ is a set of
mutually local homogeneous element in $\E_2(M)$.
Let $V$ be the vertex superalgebra generated by $\{L(z),F(z),I(z)\}$.
By Theorem \ref{VA and twisted modules from local system},
$M$ is a faithful $\sigma$-twisted $V$-module with $Y_M(L(z),x)=L(x)$,
$Y_M(F(z),x)=F(x)$, $Y_M(I(z),x)=I(x)$.
Proposition \ref{relation untwited and twisted comm formula} and
equations \eqref{twisted local Ramond 1}, \eqref{twisted local Ramond 2},
\eqref{twisted local Ramond 3} imply that
$Y_{V}(L(z),x_{1})$ and $Y_{V}(F(z),x_{1})$ satisfy
the Neveu-Schwarz relations \eqref{NS relation 1}, \eqref{NS relation 2},
\eqref{NS relation 3}.
Then $V$ is a lowest weight module for $\mathcal{NS}$-algebra under
the linear map: $L_m\mapsto L(z)_{m+1}, G_{n+{\frac{1}{2}}} \mapsto
F(z)_{n}, \bc\mapsto c$. The vertex superalgebra $V$ is a quotient
algebra of $V_{\mathcal{NS}}(c,0)$. Consequently, $M$ is a
$\sigma$-twisted module for $V_{\mathcal{NS}}(c,0)$.
Therefore, we have proved:

\begin{proposition}
For any $c\in \BF$, every locally truncated module for the Ramond algebra of central charge $c$ is
a $\sigma$-twisted $V_{\mathcal{NS}}(c,0)$-module.
\end{proposition}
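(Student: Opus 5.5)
The plan is to reproduce, for the Ramond algebra $\mathcal R$, the local-system argument used for Theorem \ref{affine twisted module}. We first reduce to supermodules. If $M$ is a locally truncated $\mathcal R$-module of central charge $c$ that carries no $\BZ_2$-grading, we form $\tilde M=M\oplus M$. On $\tilde M$ the even elements $L_m,\bc$ act diagonally as $(L_m,L_m)$, and the odd elements $F_n$ act as $(F_n,-F_n)$. Then $\tilde M$ is a locally truncated $\mathcal R$-supermodule, and $M$ is stable under all the operators. Once $\tilde M$ is shown to be a $\sigma$-twisted $V_{\mathcal{NS}}(c,0)$-module, $M$ will be a submodule of it, since the vertex superalgebra acting is generated by $L(z),F(z)$, which preserve $M$. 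So from now on we assume $M$ is a supermodule.

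For a supermodule, local truncation says exactly that $L(z)$ and $F(z)$ lie in $\E_2(M)$. Since $F(z)=\sum_n F_n z^{-n-3/2}$ involves half-integral powers, $L(z)\in\E_2(M)^0$ is even and $F(z)\in\E_2(M)^1$ is odd. The commutator relations \eqref{twisted local Ramond 1}--\eqref{twisted local Ramond 3} show that these two series, together with $I(z)=1_M$, are pairwise mutually local. The reason is that multiplying by $(z_1-z_2)^k$ with $k\ge 4$ kills every delta-function term, because the highest Hasse derivative that appears has order $3$. Theorem \ref{VA and twisted modules from local system} then produces a vertex superalgebra $V=\langle L(z),F(z),I(z)\rangle$ with an automorphism of order $2$. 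This automorphism is exactly the parity automorphism $\sigma$, since $\BZ_2$-grading and $\BZ_2$-twisting coincide here. The same theorem makes $M$ a faithful $\sigma$-twisted $V$-module with $Y_M(a(z),x)=a(x)$.

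Next we transfer the commutator relations to $V$. Because $M$ is faithful, Proposition \ref{relation untwited and twisted comm formula} applies in the direction from twisted to untwisted. The twisted relations on $M$ therefore yield the untwisted relations \eqref{NS relation 1}--\eqref{NS relation 3} for $Y_V(L(z),x)$ and $Y_V(F(z),x)$, together with \eqref{uvw}, which identifies $L(z)_jF(z)$, $F(z)_jF(z)$ and so on. This is the step I expect to be the main obstacle. The difficulty is matching the forms: in \eqref{twisted local Ramond 3} the right side contains $L(z_1)$ rather than $L(z_2)$, and the term $(\frac{z_2}{z_1})^{1/2}$ must be rewritten into the shape $\sum_j Y_W(w^{(j)},z_2)\,\partial_{z_2}^{(j)}\big(z_1^{-1}\delta(\frac{z_2}{z_1})(\frac{z_2}{z_1})^{1/2}\big)$ required by the proposition. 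For this I would use the delta-function identity $a(z_1)\delta=a(z_2)\delta$ for the $j=0$ term. I would also check that the constants $\frac12\binom{m+1}{3}$ and $\frac13(m^2-\frac14)$ make sense in characteristic $p>3$. After this rewriting, the coefficients give $F(z)_0F(z)=2L(z)$, $F(z)_2F(z)=\frac23 c\,I$, and so on. The resulting assignment
\begin{equation*}
L_m\mapsto L(z)_{m+1},\qquad G_{n+\frac12}\mapsto F(z)_{n+1},\qquad \bc\mapsto c
\end{equation*}
makes $V$ an $\mathcal{NS}$-module, up to the index shift dictated by the weights $2$ and $\frac32$.

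Finally we identify $V$ as a quotient of $V_{\mathcal{NS}}(c,0)$. The vacuum-like vector $I(z)$ satisfies $a(z)_nI(z)=0$ for $n\ge 0$, because of the creation property in $V$. This makes $I(z)$ annihilated by $\mathcal{NS}_-$, $L_0$, $L_{-1}$ and $G_{-\frac12}$, while $\bc$ acts on it as $c$. Moreover $I(z)$ generates $V$, since $V$ is generated by $L(z),F(z)$ and these equal $L_{-2}I$ and $G_{-\frac32}I$. By the universal property of the induced module \eqref{NS VSA}, there is a surjective $\mathcal{NS}$-module map $V_{\mathcal{NS}}(c,0)\to V$ sending $\1\mapsto I(z)$, $\omega\mapsto L(z)$ and $\tau\mapsto F(z)$. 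This map intertwines the vertex operators of the generators, so it is a vertex superalgebra homomorphism, by the uniqueness of a vertex superalgebra structure determined by generating fields. Composing the map $V_{\mathcal{NS}}(c,0)\to V$ with the twisted module structure of $M$ over $V$ makes $M$ a $\sigma$-twisted $V_{\mathcal{NS}}(c,0)$-module.
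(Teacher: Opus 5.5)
Your proposal is correct and follows the paper's own argument essentially step for step. Both build the local system spanned by $L(z)$, $F(z)$, $I(z)$ in $\E_2(M)$, apply Theorem~\ref{VA and twisted modules from local system} to get a faithful $\sigma$-twisted $V$-module, use Proposition~\ref{relation untwited and twisted comm formula} to turn the Ramond relations into the Neveu--Schwarz relations on $V$, and identify $V$ as a lowest weight quotient of $V_{\mathcal{NS}}(c,0)$.

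Two of your additions are correct refinements of points the paper leaves implicit. The doubling $\tilde M=M\oplus M$ for non-super modules is used by the paper only in the affine section. Your mode assignment $G_{n+\frac12}\mapsto F(z)_{n+1}$ is the right one: the paper's $F(z)_n$ is off by one, as $\tau=G_{-\frac32}\1\mapsto F(z)_{-1}I(z)=F(z)$ shows.
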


Actually, we get a one-to-one correspondence between the locally truncated modules for
the Ramond algebra of central charge $c$
and the $\sigma$-twisted $V_{\mathcal{NS}}(c,0)$-modules.

\begin{proposition}
Any $\sigma$-twisted $V_{\mathcal{NS}}(c,0)$-module is a locally truncated module for
the Ramond algebra of central charge $c$.
\end{proposition}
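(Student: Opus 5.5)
Let $(W,Y_W)$ be a $\sigma$-twisted $V_{\mathcal{NS}}(c,0)$-module. The natural approach is to define the Ramond operators directly from the vertex operators of the generators and read off the relations from the twisted commutator formula. Put
\begin{equation*}
L(z)=Y_W(\omega,z)=\sum_{m\in\BZ}L_m z^{-m-2},\qquad F(z)=Y_W(\tau,z)=\sum_{n\in\BZ}F_n z^{-n-\frac32}.
\end{equation*}
Here $\omega\in V^0$ is even. The vector $\tau$ is odd, so $\sigma\tau=-\tau$, $\tau\in V^1$ with $T=2$, and by Definition~\ref{twisted module} the series $Y_W(\tau,z)$ involves only exponents in $\frac12+\BZ$. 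Hence $F_n:=\tau_{n+\frac12}$ is indexed by $n\in\BZ$, and $L_m:=\omega_{m+1}$. The truncation axiom of a twisted module immediately gives $L_nw=F_nw=0$ for $n\gg0$ and every $w\in W$. The remaining work is to verify the Ramond relations with central charge $c$.

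\textbf{Step 1: untwisted commutators on $V_{\mathcal{NS}}(c,0)$.} The relations \eqref{NS relation 1}--\eqref{NS relation 3} hold on $V_{\mathcal{NS}}(c,0)$ with $\bc=c$. Thus $[Y(u,z_1),Y(v,z_2)]$ has the form \eqref{comm formula} for $u,v\in\{\omega,\tau\}$, with finitely many $w^{(j)}$ among $\omega,\tau,L_{-1}\omega,L_{-1}\tau,c\1$. Equivalently, the products $\omega_j\omega$, $\omega_j\tau$, $\tau_j\tau$ ($j\ge0$) are computed directly in the vacuum module. For instance,
\begin{equation*}
\tau_0\tau=2\omega,\qquad \tau_2\tau=\tfrac{2}{3}c\1,\qquad \omega_0\tau=\D^{(1)}\tau,\qquad \omega_1\tau=\tfrac32\tau,
\end{equation*}
and all higher products vanish. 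Since $p>3$, the coefficients $\frac12,\frac23,\frac32$ make sense in $\BF$.

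\textbf{Step 2: transfer to $W$.} By the last assertion of Proposition~\ref{relation untwited and twisted comm formula}, which does not require faithfulness, \eqref{comm formula} implies \eqref{twisted comm formula} on $W$:
\begin{itemize}
\item for $u=\omega$ ($r=0$), this gives $[L(z_1),L(z_2)]$ and $[L(z_1),F(z_2)]$ in exactly the shapes \eqref{twisted local Ramond 1}, \eqref{twisted local Ramond 2};
\item for $u=\tau$ ($r=1$, $T=2$), it gives $[F(z_1),F(z_2)]$ with the factor $(z_2/z_1)^{1/2}$, as in \eqref{twisted local Ramond 3}, the bracket being an anticommutator since both are odd.
\end{itemize}
One subtlety here is that $Y_W(\D^{(1)}v,z)=\partial_z^{(1)}Y_W(v,z)$ on a twisted module; this is the twisted analogue of the $\D$-derivative property and follows from the twisted iterate formula \eqref{Twisted iterate} with $v=\1$. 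This is what lets the $w^{(j)}=\D^{(1)}\tau$ term become $\partial F$.

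\textbf{Step 3: extract coefficients.} Extracting the coefficients of $z_1^{-m-2}z_2^{-n-2}$, and respectively $z_1^{-m-2}z_2^{-n-\frac32}$ and $z_1^{-m-\frac32}z_2^{-n-\frac32}$, yields
\begin{equation*}
[L_m,L_n],\qquad [L_m,F_n]=\Big(\frac m2-n\Big)F_{m+n},\qquad [F_m,F_n]=2L_{m+n}+\frac13\Big(m^2-\frac14\Big)\delta_{m+n,0}\,c.
\end{equation*}
The main obstacle is this bookkeeping in the $FF$ bracket: one must expand
\begin{equation*}
\partial_{z_2}^{(j)}\Big(z_1^{-1}\delta(z_2/z_1)(z_2/z_1)^{1/2}\Big),\qquad j=0,2,
\end{equation*}
with binomials $\binom{n+\frac12}{j}$ evaluated in $\BF$ via $\pi$, and check that the $j=2$ term produces exactly $\frac13(m^2-\frac14)c=\frac23\binom{m+\frac12}{2}c$ after symmetrization. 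I would do this first for the central term, using $\binom{m+1/2}{2}=\frac{(m+\frac12)(m-\frac12)}{2}$, which is valid because $2\nmid p$. With the relations established, $W$ is a locally truncated module for $\mathcal R$ of central charge $c$, and the map is the inverse of the construction in the preceding proposition.
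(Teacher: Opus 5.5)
Your proposal is correct and follows the same argument as the paper: the twisted commutator formula applied to $Y_W(\omega,z)$ and $Y_W(\tau,z)$ gives the Ramond relations with $\bc$ acting as $c$, and the truncation axiom of a twisted module gives local truncation. You make explicit several details the paper leaves implicit: the products $u_jv$ in $V_{\mathcal{NS}}(c,0)$; the identity $Y_W(\D^{(1)}\tau,z)=\partial_z^{(1)}Y_W(\tau,z)$, which means the first term of \eqref{twisted local Ramond 2} should read $z_2^{-1}\delta(z_1/z_2)\partial_{z_2}^{(1)}F(z_2)$ and not $z_2^{-1}\delta(z_1/z_2)F(z_2)$ as displayed; and the coefficient check $\frac23\binom{m+\frac12}{2}=\frac13\bigl(m^2-\frac14\bigr)$.
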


\begin{proof}
Let $M$ be a $\sigma$-twisted $V_{\mathcal{NS}}(c,0)$-module,
then $Y_M(\omega,z)$ and $Y_M(\tau,z)$ are elements in $\E_2(M)$ and satisfy
the twisted commutator formula \eqref{twisted commutator},
which is the same as \eqref{twisted local Ramond 1}, \eqref{twisted local Ramond 2},
\eqref{twisted local Ramond 3}.
Thus $M$ is a module for the Ramond algebra of central charge $c$.
The truncation property of twisted modules forces the Ramond modes $L_n$, $F_n$ to satisfy
truncation, hence $M$ is locally truncated.
\end{proof}

The twisted Zhu algebra can be computed explicitly.

\begin{proposition}
Let $\BF\langle x,y\rangle$ be the free associative algebra generated by $x,y$. Then
the twisted Zhu algebra
\begin{equation*}
    A_{\sigma}(V_{\mathcal{NS}}(c,0))\cong \BF\langle x,y\rangle \big/(xy-yx,y^2-x-\tfrac{1}{24}c),
\end{equation*}
where $x$ corresponds to $[\omega]$ and $y$ corresponds to $[\tau]$.
\end{proposition}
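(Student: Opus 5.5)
Here $g=\sigma$, so $g\sigma=1$ and $T_0=1$. Hence $V^{0*}=V$, every element uses the first branch of \eqref{def circle}, and $O_\sigma(V)$ is spanned by $u\circ^n_\sigma v=\Res_z\frac{(1+z)^{\deg u}}{z^{2+n}}Y(u,z)v$ with $*_\sigma$ the untwisted-type product. I would follow the pattern of Theorem \ref{th affine zhu alg}: first show that $[\omega]$ and $[\tau]$ generate $A_\sigma(V_{\mathcal{NS}}(c,0))$ and satisfy the two relations, which gives a surjection from the quotient of $\BF\langle x,y\rangle$. Then I would show injectivity using Ramond modules.

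\emph{Spanning.} The PBW theorem gives that $V_{\mathcal{NS}}(c,0)$ is spanned by $L_{-m_1}\cdots L_{-m_s}G_{-n_1-\frac12}\cdots G_{-n_t-\frac12}\1$ with $m_i\ge2$ and $n_j\ge1$. I would induct on degree, using $\omega_{-k}=L_{-k+1}$ and $\tau_{-k}=G_{-k+\frac12}$. For $a\in\{\omega,\tau\}$ and $k\ge 3$, the element $a\circ^{k-3}_\sigma v$ expresses $[a_{-k}v]$ through classes of $a_{i-k}v$ with $i\ge1$, which have lower degree. For $k=2$, the relation $[\omega]*_\sigma[v]=\sum_i\binom{2}{i}[\omega_{i-1}v]$ gives $[L_{-1}v]$ modulo lower-degree terms. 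I would handle $L_{-1}v=\D v$ directly with Lemma \ref{Dk cong}. Similarly $[\tau]*_\sigma[v]=\sum_i\binom{3/2}{i}[\tau_{i-1}v]$ expresses $[G_{-\frac12}v]$ as $[\tau]*[v]$ plus lower-degree terms. So every class is a polynomial in $[\omega],[\tau]$ of bounded degree.

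\emph{Relations.} By Lemma \ref{zhu algebra equiv}(ii), $[\omega]*[\tau]-[\tau]*[\omega]=\Res_z(1+z)Y(\omega,z)\tau=(L_{-1}+L_0)\tau=\D\tau+\frac32\tau$. Lemma \ref{Dk cong} gives $\D\tau\equiv-\frac32\tau$, so the commutator vanishes. For $y^2$, I would compute $\tau*_\sigma\tau=\sum_i\binom{3/2}{i}\tau_{i-1}\tau$, using $\tau_0\tau=2\omega$, $\tau_2\tau=\frac23c\1$, $\tau_1\tau=0$ and $\tau_{-1}\tau=G_{-3/2}G_{-3/2}\1$. The last term needs reduction. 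Skew symmetry gives $\tau_{-1}\tau=-\tau_{-1}\tau+\D(\cdots)$, which reduces it via $\D^{(j)}$ terms and Lemma \ref{Dk cong} (here $p>3$ lets us divide by $2$). Alternatively, $\tau\circ^0_\sigma\tau$ combined with $\tau*\tau$ eliminates it. Collecting the constants should give $[\tau]^2=[\omega]+\frac{c}{24}$. The main obstacle is this bookkeeping, namely getting exactly $\frac1{24}$ with binomials like $\binom{3/2}{3}=-\frac1{16}$.

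\emph{Injectivity.} I would take $U=\BF[x]$ with $x$ acting by multiplication, or more simply any highest-weight data. For each $h\in\BF$, I would build the Ramond Verma module induced from the $\mathcal{R}_{\le0}$-module $\BF[y]/(y^2-h+\frac c{24})$, where $L_0$ acts as $h$ and $F_0$ acts as $y$. Here I use $F_0^2=L_0-\frac{c}{24}$ from the relation $[F_0,F_0]=2L_0-\frac{c}{12}$. Such a module is locally truncated, so by the previous propositions it is a $\sigma$-twisted module. Its top level is an $A_\sigma$-module on which $[\omega]\mapsto h$ and $[\tau]\mapsto F_0$. Better still, I would use the single module induced from $\BF[x,y]/(y^2-x-\frac c{24})$ with $L_0=x$. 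This makes the regular representation of the quotient algebra appear in $\Omega(M)$. Any element of the kernel of the surjection then acts as zero on that faithful module, so it vanishes, and the surjection is an isomorphism.
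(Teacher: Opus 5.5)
\textbf{The gap.} The step you postpone as bookkeeping is where the proposal goes wrong. The relation you aim for, $[\tau]^2=[\omega]+\frac{c}{24}$, is false for $c\neq 0$. The needed products are
\[
\tau_{-1}\tau=G_{-3/2}^2\1=L_{-3}\1=\D\omega,\qquad \tau_0\tau=2\omega,\qquad \tau_1\tau=0,\qquad \tau_2\tau=\frac{2c}{3}\1 .
\]
Together with $\binom{3/2}{3}=-\frac1{16}$, this gives
\[
\tau*_\sigma\tau=\sum_{i\geq0}\binom{3/2}{i}\tau_{i-1}\tau=\D\omega+3\omega-\frac{c}{24}\1\equiv\omega-\frac{c}{24}\1\pmod{O_\sigma(V_{\mathcal{NS}}(c,0))},
\]
because $\D\omega\equiv-2\omega$ by Lemma \ref{Dk cong}. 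Hence $[\tau]^2=[\omega]-\frac{c}{24}$, and the ideal must be $(xy-yx,\,y^2-x+\frac{1}{24}c)$. The sign in the displayed statement is a misprint. The paper's own proof derives $y^2-x+\frac1{24}c$, and so does the subsequent proposition for $V^0_{\mathcal{NS}}(c,0)$.

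Your proposal already contains the contradiction. You correctly use $F_0^2=L_0-\frac{c}{24}$, and $[\omega]$, $[\tau]$ act on the top level of a Ramond module as $o(\omega)=L_0$ and $o(\tau)=F_0$. So $[\tau]^2=[\omega]+\frac{c}{24}$ would force $\frac{c}{12}=0$ on that top level. For the same reason, your faithful module $\BF[x,y]/(y^2-x-\frac{c}{24})$ with $L_0=x$, $F_0=y$ is not a module for $\mathcal{R}_0=\BF L_0+\BF F_0+\BF\bc$. It has to be $\BF[x,y]/(y^2-x+\frac{c}{24})$.

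\textbf{After correcting the sign.} The rest of your plan is sound and follows the paper's route: generators and relations give a surjection, and induced Ramond modules give injectivity. You should fix an off-by-one in the spanning step. The element $a\circ^{k-2}_\sigma v=\sum_{i}\binom{\deg a}{i}a_{i-k}v$ eliminates $a_{-k}v$ for $k\geq2$. The product $a*_\sigma v$ eliminates $a_{-1}v$, that is, $L_{-2}v$ and $G_{-3/2}v$. The terms $L_{-1}v$ and $G_{-1/2}v$ are simply of lower degree.

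In two respects your version is tidier than the paper's. Your PBW spanning set allows several $G$-factors, whereas the paper's list has only a single $G_{-m}$. And your single top level $\BF[x,y]/(y^2-x+\frac{c}{24})$, which is the regular representation of the quotient, gives injectivity at once. The paper instead chooses, for each $f_1+f_2y$, an $h$ with $f_1(h)^2-f_2(h)^2(h-\frac{c}{24})\neq0$.
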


\begin{proof}
Since the vertex superalgebra $V_{\mathcal{NS}}(c,0)$ is an induced module of
the Neveu-Schwarz algebra, a basis of $V_{\mathcal{NS}}(c,0)$ is given by vectors of
the form
\begin{equation*}
    L_{-n_1}^{k_1}\cdots L_{-n_s}^{k_s}G_{-m}\1,
\end{equation*}
where $n_1\geq n_2 \geq \cdots \geq n_s\geq 2,$ $m\geq \frac{3}{2}$,
$n_i\in \BZ_{\geq 2} ~(i=1,\dots, s)$, $m\in \frac{1}{2}+\BZ_{\geq1}$,
and $k_1,\dots, k_s$ are nonnegative integers.

For any homogeneous $v\in V_{\mathcal{NS}}(c,0)$, following the same argument
as in the untwisted case (see \cite{LM21}), we have
\begin{equation*}
    L_{-n}v\equiv (-1)^n((n-1)(L_{-2}+L_{-1})+L_0)v \pmod{O_g(V_{\mathcal{NS}}(c,0))},\quad n\geq 2.
\end{equation*}
A direct computation further shows that
\begin{equation*}
    [\omega]*_{\sigma} [v]\equiv [(L_{-2}+L_{-1})v]\equiv [v]*_g [\omega] \pmod{O_g(V_{\mathcal{NS}}(c,0))}.
\end{equation*}
Therefore, we derive the following equation in $A_{\sigma}(V_{\mathcal{NS}}(c,0))$
\begin{equation*}
    [L_{-n}v]=[(-1)^n((n-1)(L_{-2}+L_{-1})+L_0)v]=\big((-1)^n(n-1)[\omega]+(-1)^n(\deg v)[\1]\big)*_{\sigma}[v]
\end{equation*}
for any homogeneous $v\in V_{\mathcal{NS}}(c,0)$. It follows that
\begin{align*}
    [L_{-n_1}^{k_1}\cdots L_{-n_s}^{k_s}G_{-m}\1]=f([\omega])*_{\sigma}[G_{-m}\1],
\end{align*}
for some polynomial $f(x)\in\BF[x]$.

Next, we analyze the elements $[G_{-m}\1]$ in $A_{\sigma}(V_{\mathcal{NS}}(c,0))$.
By a direct calculation of $\tau\circ_{\sigma}^n \1$, we have
\begin{equation*}
    [G_{-n-3+\frac{1}{2}}\1] \equiv \sum_{i\geq 1}^{n+1} \binom{\frac{3}{2}}{i}[G_{-n-3+\frac{1}{2}+i}\1] \pmod {O_g(V_{\mathcal{NS}}(c,0))}, \quad n\geq 0.
\end{equation*}
This shows that every class $[G_{-m}\1]$ is a scalar multiple of
$[G_{-\frac{3}{2}}\1]$ in $A_{\sigma}(V_{\mathcal{NS}}(c,0))$.
Consequently, every element of $A_{\sigma}(V_{\mathcal{NS}}(c,0))$
can be written in the form $f([\omega])*_{\sigma}[\tau]$ for some polynomial $f(x)\in\BF[x]$.
Define a map $\psi: \BF\langle x, y\rangle\to A_{\sigma}(V_{\mathcal{NS}}(c,0))$ by
sending the generators $x\mapsto [\omega]$ and $y\mapsto[\tau]$ and extend this to a
homomorphism of associative algebras. By the discussion above, $\psi$ is surjective.

Next, we will determine the kernel of $\psi$ by computing the product
in $A_{\sigma}(V_{\mathcal{NS}}(c,0))$.
By the definition of $A_{\sigma}(V_{\mathcal{NS}}(c,0))$, we have
\begin{equation*}
    [\omega]*_{\sigma}[v]\equiv [v]*_{\sigma}[\omega],\quad [\tau]*_{\sigma} [\tau]=[\omega-\frac{1}{24}\bc].
\end{equation*}
Hence
\begin{equation*}
    xy-yx\in \ker \phi,\quad y^2-x+\frac{1}{24}c\in \ker \phi.
\end{equation*}
Thus,
\begin{equation*}
    (xy-yx,y^2-x+\frac{1}{24}c) \subset \ker \phi.
\end{equation*}
Now consider the induced homomorphism
\begin{equation*}
    \overline{\psi}:\BF\langle x,y\rangle /(xy-yx,y^2-x+\frac{1}{24}c)\to A_{\sigma}(V_{\mathcal{NS}}(c,0)).
\end{equation*}
Since $xy=yx$ and $y^2=x-\frac{1}{24}c$ in $\BF\langle x,y\rangle /(xy-yx,y^2-x+\frac{1}{24}c)$,
every element in the quotient algebra can be written in the form
$f_1(x)+f_2(x)\cdot y$ for some polynomial $f_1(x),f_2(x)\in\BF[x]$.

 We claim $\ker \overline \psi=0$.
Assume there is a nonzero element $f_1(x)+f_2(x)\cdot y \in \ker \overline \psi$, then
$f_1([\omega])+f_2([\omega])*_{\sigma} [\tau]$
acts as zero on any $A_{\sigma}(V_{\mathcal{NS}}(c,0))$-module.
In other word, $f_1([\omega])+f_2([\omega])*_{\sigma} [\tau]$ acts
as zero on $\Omega_{V_{\mathcal{NS}}(c,0))}(M)$ for any irreducible $\BN$-graded $\sigma$-twisted
$V_{\mathcal{NS}}(c,0))$-module $M$, which is also an irreducible locally truncated module
for the Ramond algebra $\mathcal R$ with central charge $c$.
We can always find a locally truncated module $M$ for $\mathcal R$ on
which $f_1([\omega])+f_2([\omega])*_{\sigma} [\tau]$ does not act as zero. We take a triangular
decomposition of $\mathcal R$ as follows
\begin{equation*}
    \mathcal R=\mathcal R_{+}\oplus \mathcal R_0\oplus \mathcal R_{-},
\end{equation*}
where
\begin{equation*}
    \mathcal R_{\pm}= \bigoplus_{n\in \BZ_{\pm}} (\BF L_{n} + \BF F_{n }),\quad \mathcal{R}_{0}=\BF L_0+\BF F_0+\BF c.
\end{equation*}
Take $h\in \BF$ such that $(f_1(h))^2-(f_{2}(h))^2(h-\frac{c}{24})\neq 0$.
We consider the $1$-dimensional $\mathcal R_+\oplus \mathcal R_0$-module
$\BF v_{h}$ on which $\mathcal R_+$ acts trivially, $L_0$ acts as $h$,
and $F_0$ acts as a square root of $h-\frac{1}{24}c$ (since $F_0F_0=L_0-\frac{1}{24}\bc $).
Then the induced module
\begin{equation*}
    M(h,c)=U(\mathcal R)\otimes_{U(\mathcal R_+\oplus \mathcal R_0)} \BF v_h
\end{equation*}
is a locally truncated $\mathcal R$-module thus a $\sigma$-twisted $V_{\mathcal{NS}}(c,0)$-module,
with $\Omega(M(h,c))=\BF v_h$. It is easy to see that $f_1([\omega])+f_2([\omega])*_{\sigma} [\tau]$ is not zero on $\Omega(M(h,c))$ by the choice of $h$.
This is a contradiction and we conclude that 
\begin{equation*}
    \ker \overline \psi=0.
\end{equation*}
Therefore, $\overline \psi$ is an isomorphism, and the result follows.
\end{proof}

For $m\in \BZ$, we define $\delta_{p\mid m}=1$ if $p\mid m$ and
$\delta_{p\mid m}=0$ if $p\nmid m$. It is proved in Lemma 5.5 of \cite{LM21}
that $\mathcal NS$ is a restricted Lie superalgebra with a $p$-mapping uniquely
determined by
\begin{equation*}
    \bc^{[p]}=\bc,\quad (L_m)^{[p]}=\delta_{p\mid m}L_{mp}\quad \text{ for } m\in \BZ.
\end{equation*}
Consider the ideal $I_{\lambda}$ of $V_{\mathcal{NS}}(c,0)$ generated by
\begin{equation*}
(L_m^p-\delta_{p\mid m}L_{mp} -\delta_{n,2}\lambda^p)\1 \quad \text{ for } n\geq 2.
\end{equation*}
Then there is a vertex superalgebra structure on
$V_{\mathcal {NS}}^\lambda(c,0)=V_{\mathcal{NS}}(c,0)/I_\lambda$ (see \cite{LM21}).

Since the automorphism $\sigma$ of $V_{\mathcal {NS}}(c,0)$ preserve the
generators of $I_\lambda$, $\sigma$ induces an automorphism on the
quotient algebra $V_{\mathcal {NS}}^\lambda(c,0)$, which is also denoted by $\sigma$.
Note that if $\lambda\neq 0$, then $I_\lambda$ is not a graded ideal
and $V_{\mathcal {NS}}^\lambda(c,0)$
is not a $\frac{1}{2}\BZ$-graded vertex superalgebra.
When $\lambda=0$, $V_{\mathcal {NS}}^0(c,0)$ is a
$\frac{1}{2}\BZ$-graded vertex superalgebra.
The quotient morphism from $V_{\mathcal {NS}}(c,0)$ to $V_{\mathcal {NS}}^0(c,0)$
gives rise to an associative algebra morphism
from $A_{\sigma}(V_{\mathcal {NS}}(c,0))$ to $A_{\sigma}(V(\mathcal{NS}^0(c,0))$.
We get the following result directly, whose proof is the
same as Theorem 5.8 in \cite{LM21}:

\begin{proposition}
For any $c\in \BF$, there is an algebra isomorphism
\begin{equation*}
    \BF\langle x,y\rangle/(xy-yx,y^2-x+\frac{1}{24}c,x^p-x)\to A_{\sigma}(V_{\mathcal {NS}}^0(c,0))
\end{equation*}
by sending $x$ to $[\omega]$ and $y$ to $[\tau]$.
\end{proposition}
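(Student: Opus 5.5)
The plan is to run the argument of Theorem~5.8 in \cite{LM21} on top of the previous proposition. Surjectivity is inherited. The quotient map $V_{\mathcal{NS}}(c,0)\to V_{\mathcal{NS}}^0(c,0)$ is a surjective homomorphism commuting with $\sigma$ and preserving the $\frac12\BZ$-grading. It therefore maps $O_\sigma(V_{\mathcal{NS}}(c,0))$ onto $O_\sigma(V_{\mathcal{NS}}^0(c,0))$, and the induced map $A_\sigma(V_{\mathcal{NS}}(c,0))\to A_\sigma(V_{\mathcal{NS}}^0(c,0))$ is a surjective algebra map. Composing with the isomorphism of the previous proposition gives a surjection
\[
\BF\langle x,y\rangle/(xy-yx,\,y^2-x+\tfrac{1}{24}c)\to A_\sigma(V_{\mathcal{NS}}^0(c,0)),\qquad x\mapsto[\omega],\ y\mapsto[\tau].
\]
Its kernel is the image of $I_0$. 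It remains to show that this kernel is exactly the ideal generated by $x^p-x$.

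\emph{Step 1: $x^p-x$ lies in the kernel.} Here I would use the generator $(L_{-2}^p-\delta_{p\mid -2}L_{-2p})\1=L_{-2}^p\1$, since $p\nmid 2$. Iterating the congruence $L_{-n}v\equiv(-1)^n((n-1)(L_{-2}+L_{-1})+L_0)v$ with $n=2$ gives $[L_{-2}v]=([\omega]+(\deg v)[\1])*_\sigma[v]$. Applying this with $v=L_{-2}^{k}\1$ of degree $2k$ yields
\[
[L_{-2}^p\1]=\prod_{k=0}^{p-1}([\omega]+2k)=\prod_{j\in\BF_p}([\omega]+j)=[\omega]^p-[\omega],
\]
because $2k$ runs over $\BF_p$ as $k$ runs over $0,\dots,p-1$ ($p$ odd). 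Hence $x^p-x\mapsto 0$.

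\emph{Step 2: the kernel is no larger.} Elements of $\BF[x,y]/(y^2-x+\frac{c}{24},x^p-x)$ have the form $f_1(x)+f_2(x)y$ with $\deg f_i<p$. I would repeat the detection argument of the previous proposition, now restricting to $h\in\BF_p$. For such $h$, $M(h,c)$ has a quotient on which the restricted relations hold: the $p$-center elements $L_m^p-\delta_{p\mid m}L_{mp}$ (and the corresponding Ramond-side twisted versions) act as zero on the lowest weight space exactly when $h^p-h=0$. This gives a $\sigma$-twisted $V_{\mathcal{NS}}^0(c,0)$-module whose $\Omega$ contains $v_h$, with $F_0$ acting by $\pm\sqrt{h-c/24}$. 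The elements $f_1+f_2y$ with both square roots give $f_1(h)\pm f_2(h)\sqrt{h-c/24}$. If these vanish for all $h\in\BF_p$ and both signs, then $f_1$ and $f_2$ vanish on $\BF_p$, possibly after an adjustment when $h=c/24$. Since $\deg f_i<p$, this forces $f_i=0$.

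The main obstacle is Step 2. One must verify that the twisted analogue of the $p$-central ideal acts trivially on the module built from $v_h$, i.e.\ that its image under the twisted modes lies in the $p$-center of $U(\mathcal R)$. This parallels Theorem~\ref{restricted VA module equiv} and \cite{LM21}. I would first try to check that the zero-mode of $L_{-2}^p\1$ on $\Omega$ equals $L_0^p-L_0$, so that the condition on $v_h$ reduces to $h\in\BF_p$. The degenerate case $h=c/24$, where $F_0$ is nilpotent, needs the separate handling noted above.
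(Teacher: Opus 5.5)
Your outline is what the paper intends: it proves this proposition only by pointing to Theorem~5.8 of \cite{LM21}. Your Step~1 is correct. From $\omega*_\sigma v=L_{-2}v+2L_{-1}v+L_0v$ and $L_{-1}v\equiv-\deg v\,v$ you get $[L_{-2}v]=([\omega]+\deg v)*_\sigma[v]$, hence $[L_{-2}^p\1]=\prod_{k=0}^{p-1}([\omega]+2k)=[\omega]^p-[\omega]$. For $c\notin\BF_p$ your detection argument in Step~2 also works as written.

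The gap is in Step~2 when $c\in\BF_p$ (note $24$ is invertible since $p>3$). This includes the basic case $c=0$, so it is not marginal. Put $h_0=c/24\in\BF_p$ and $f_2=\prod_{h\in\BF_p,\,h\neq h_0}(x-h)$. Then $e=f_2(x)y$ is nonzero in $\BF[x,y]/(y^2-x+\frac{c}{24},x^p-x)$, since $\deg f_2=p-1<p$. But $e^2=f_2(x)^2(x-h_0)=f_2(x)(x^p-x)$, so $e$ is nilpotent. On each of your test modules, $e$ acts on $v_h$ by $f_2(h)\epsilon$ with $\epsilon^2=h-h_0$. This is $0$ for every $h\in\BF_p$: either $f_2(h)=0$, or $h=h_0$ and $\epsilon=0$. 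So lowest weight spaces on which $F_0$ acts by a scalar only ever give $f_1(h_0)=0$, never $f_2(h_0)=0$. No ``adjustment'' within that family detects $e$. The underlying reason is that in \cite{LM21} the target $\BF[x]/(x^p-x)$ is split semisimple. Here the target is $\BF[y]/\bigl((y^2+\frac{c}{24})^p-(y^2+\frac{c}{24})\bigr)$, which has the double root $y=0$ when $c\in\BF_p$.

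\textbf{Fix.} Use a lowest weight space on which $F_0$ is not semisimple; the following choice handles all $c$ at once.
Let $q(y)=(y^2+\frac{c}{24})^p-(y^2+\frac{c}{24})$ and $U=\BF[y]/(q(y))$. Make $U$ a $\BZ_2$-graded $\mathcal R_0$-module of central charge $c$ with $F_0$ acting as $y$ and $L_0$ as $y^2+\frac{c}{24}$. This is consistent because $[L_0,F_0]=0$ and $F_0^2=L_0-\frac{c}{24}$ in $U(\mathcal R_0)$. Moreover $L_0^p-L_0$ acts on $U$ as $q(y)=0$.
Let $\mathcal R_+$ act trivially, induce, and divide by the submodule generated by the central elements $L_n^p-\delta_{p\mid n}L_{np}$ for $n\in\BZ$. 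Those with $n\ge0$ already vanish on the induced module, since they are central and kill $U$. Those with $n<0$ raise degree by $-np>0$. So the degree-zero part $U$ survives and lies in $\Omega$.
On $U$, $[\omega]$ and $[\tau]$ act as $o(\omega)=L_0$ and $o(\tau)=F_0$. Hence $f_1(x)+f_2(x)y$ acts as multiplication by $f_1(y^2+\frac{c}{24})+f_2(y^2+\frac{c}{24})\,y$ on the faithful cyclic module $\BF[y]/(q)$, which forces injectivity. This is the same device as taking $U=U(\fg^o)$ in Theorem~\ref{th affine zhu alg}.

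\textbf{The other point you flagged.} Checking that the zero mode of $L_{-2}^p\1$ on $\Omega$ equals $L_0^p-L_0$ is not enough. You need the fields of the generators of $I_0$ to vanish on the whole module, i.e.\ that all $L_n^p-\delta_{p\mid n}L_{np}$ act as zero there. This should reduce to the corresponding untwisted Virasoro statement (as in \cite{JLM,LM21}). The reason is that these generators lie in the even, $\sigma$-fixed Virasoro subalgebra, and a $\sigma$-twisted module is an ordinary module for $V_{\bar 0}$.
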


Note that
\begin{equation*}
    x^p-x=\prod_{i=0}^{p-1} (x-[i]),
\end{equation*}
where $[i]\in \BZ_p\subset \BF$ and $A_{\sigma}(V_{\mathcal {NS}}^0(c,0))$ is commutative. We immediately have:

\begin{corollary}
For any $c\in\BF$, the vertex superalgebra $V_{\mathcal {NS}}^0(c,0)$ has
only finitely many irreducible $\BN$-graded $\sigma$-twisted modules and
the number of such irreducible $\sigma$-twisted
$V_{\mathcal {NS}}^0(c,0)$-modules are no more than $2p$.
\end{corollary}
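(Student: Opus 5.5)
The plan is to combine the preceding Proposition, which identifies $A_{\sigma}(V_{\mathcal{NS}}^0(c,0))$ with $\BF\langle x,y\rangle/(xy-yx,\,y^2-x+\frac{1}{24}c,\,x^p-x)$, with the bijection of Section 4 between simple $A_g(V)$-modules and simple $\frac{1}{T_0}\BN$-graded $g$-twisted $V$-modules. Here $g=\sigma$, so $g\sigma=1$ and $T_0=1$; the $\frac{1}{T_0}\BN$-graded $\sigma$-twisted modules are therefore exactly the $\BN$-graded ones. It thus suffices to show that $A:=A_{\sigma}(V_{\mathcal{NS}}^0(c,0))$ has at most $2p$ isomorphism classes of simple modules. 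By the functors $\Omega$ and $L$, an irreducible $\BN$-graded $\sigma$-twisted module $M$ gives the simple module $\Omega(M)=M(0)$, and $M\cong L(M(0))$. Hence the map $M\mapsto \Omega(M)$ is injective on isomorphism classes, and the count of modules is bounded by the count of simple $A$-modules.

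Next I would show $A$ is a commutative algebra of dimension at most $2p$. The quotient is commutative because $xy=yx$. Since $y^2=x-\frac{1}{24}c$, every element reduces to the form $f_1(x)+f_2(x)y$. The relation $x^p=x$ lets us take $\deg f_i\le p-1$. So $A$ is spanned by $x^iy^\epsilon$ with $0\le i\le p-1$ and $\epsilon\in\{0,1\}$, giving $\dim A\le 2p$.

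Since $\BF$ is algebraically closed and $A$ is commutative and finite-dimensional, every simple $A$-module is one-dimensional. Such a module is determined by a character $\chi$ with $\chi(x)=h$ and $\chi(y)=s$. These must satisfy $h^p=h$, so $h\in\BZ_p$ (these are the $p$ roots from the factorization $x^p-x=\prod_{i}(x-[i])$), and $s^2=h-\frac{1}{24}c$. For each of the $p$ values of $h$ there are at most two choices of $s$. This gives at most $2p$ simple modules, and therefore at most $2p$ irreducible $\BN$-graded $\sigma$-twisted $V_{\mathcal{NS}}^0(c,0)$-modules.

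The main subtlety is justifying that simple modules of $A$, a priori possibly infinite-dimensional, are one-dimensional. I would handle this through finite-dimensionality: a simple module over a finite-dimensional algebra is cyclic, hence finite-dimensional. Commuting operators on a finite-dimensional space over an algebraically closed field have a common eigenvector, which spans a submodule, so the simple module is one-dimensional. I would also note that the $\mathbb{Z}_2$-grading plays no role in this count, since twisted modules need not be super.
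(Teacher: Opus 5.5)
Your proposal is correct and follows essentially the same route as the paper. The paper argues that commutativity together with $[\omega]^p=[\omega]$ forces every simple $A_{\sigma}(V_{\mathcal{NS}}^0(c,0))$-module to be one-dimensional, with $[\omega]$ acting by some $h\in\BZ_p$ and $[\tau]$ by a square root of $h-\frac{1}{24}c$, which gives at most $2p$ possibilities. You make explicit three points the paper leaves implicit: that $T_0=1$; that $M\cong L(\Omega(M))$, so the count of simple modules transfers to twisted modules; and that finite-dimensionality of the algebra is what justifies one-dimensionality of its simple modules.
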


\begin{proof}
Since $A_{\sigma}(V_{\mathcal {NS}}^0(c,0))$ is commutative
and $[\omega]^p-[\omega]=0$ in $A_{\sigma}(V_{\mathcal {NS}}^0(c,0))$,
any irreducible $A_{\sigma}(V_{\mathcal {NS}}^0(c,0))$-module is one dimensional
and $[\omega]$ acts on any irreducible $A_{\sigma}(V_{\mathcal {NS}}^0(c,0))$-module
as a scalar $h\in\{0, 1, \dots, p-1\}$. Moreover, one has $[\tau]^2=[\omega]-\frac{1}{24}c$
in $A_{\sigma}(V_{\mathcal {NS}}^0(c,0))$, then $[\tau]$ acts on any
irreducible $A_{\sigma}(V_{\mathcal {NS}}^0(c,0))$-module as a square root
of $h-\frac{1}{24}c$. There are at most $2p$ choices.
\end{proof}


\begin{thebibliography}{FLM}

\bibitem[B]{B86}
R.E. Borcherds, Vertex algebras, Kac-Moody algebras, and the Monster,
\textit{Proc. Nat. Acad. Sci. U.S.A.} \textbf{83}(10) (1986) 3068--3071.

\bibitem[DLM]{DLM} C.-Y. Dong, H.-S. Li, G. Mason, Twisted representations of vertex operator algebras, \textit{Math. Ann. } \textbf{310} (1998) 571--600.

\bibitem[DNR]{DNR} C.-Y. Dong, N.-H. Ng, L. Ren, Vertex operator superalgebras and the $16$-fold way, \textit{Trans. Amer. Math. Soc. } \textbf{374}(11) (2021) 7779--7810.

\bibitem[DR1]{DR1} C.-Y. Dong, L. Ren, Representations of vertex operator algebras over an arbitrary field,
\textit{J. Algebra} \textbf{403} (2014) 497--516.

\bibitem[DR2]{DR2} C.-Y. Dong, L. Ren, Vertex operator algebras associated to the Virasoro algebra over an arbitrary field,
\textit{Trans. Amer. Math. Soc.} \textbf{368}(7) (2016) 5177--5196.

\bibitem[DW]{DW} C.-Y. Dong, W. Wang, Representations of vertex operator superalgebras over an arbitrary field,
\textit{J. Algebra} \textbf{610} (2022) 571--590.

\bibitem[DZ]{DZ} C.-Y. Dong, Z.-P. Zhao, Twisted representations of vertex operator superalgebras,
\textit{Commun. Contemp. Math.} \textbf{08}(1) (2006) 101--121.

\bibitem[FLM]{FLM} I. Frenkel, J. Lepowsky, A. Meurman,
\textit{Vertex Operator Algebras and the Monster}.
Pure Appl. Math., 134. Academic Press, Inc., Boston, MA, 1988.

\bibitem[JLM]{JLM} X.-Y. Jiao, H.-S. Li, Q. Mu,
Modular Virasoro vertex algebras and affine vertex algebras,
\textit{J. Algebra} \textbf{519} (2019) 273--311.

\bibitem[KW]{KW} V. Kac, W.-Q. Wang, Vertex operator superalgebras and their representations,
in: Mathematical aspects of conformal and topological field theories and quantum groups
(South Hadley, MA, 1992) Contemp. Math., vol. 175, Amer. Math. Soc., Providence, RI, 1994,
pp. 161--191.

\bibitem[LL]{LL}
J. Lepowsky, H.-S. Li,
\textit{Introduction to Vertex Operator Algebras and Their Representations}.
Progr. Math., 227. Birkh\"{a}user Boston, Inc., Boston, MA, 2004.

\bibitem[L]{Li96T}H.-S. Li, Local systems of twisted vertex operators, vertex operator superalgebras and
twisted modules. Moonshine, the Monster, and related topics (South Hadley, MA,
1994), 203-236, Contemp. Math., 193, Amer. Math. Soc., Providence, RI, 1996.

\bibitem[LM1]{LM}H.-S. Li, Q. Mu,
Heisenberg VOAs over fields of prime characteristic and their representations,
\textit{Trans. Amer. Math. Soc.} \textbf{370}(2) (2018) 1159--1184.


\bibitem[LM2]{LM20}H.-S. Li, Q. Mu,
Twisted modules for affine vertex algebras over fields of prime characteristic,
\textit{J. Algebra} \textbf{541} (2020) 380--414.

\bibitem[LM3]{LM21}H.-S. Li, Q. Mu,
Vertex superalgebras over fields of prime characteristic,
\textit{J. Algebra} \textbf{606} (2022) 700--741.
\bibitem[LM4]{LM22}H.-S. Li, Q. Mu,
$\BN$-graded modules for $\BZ$-graded modular vertex superalgebras,
\textit{J. Algebra} \textbf{633} (2023) 619-647.

\bibitem[M]{Mu}Q. Mu, Lattice vertex algebras over fields of prime characteristic,
\textit{J. Algebra} \textbf{417} (2014) 39--51.

\bibitem[X]{Xu}X.-P. Xu, Introduction to vertex operator superalgebras and their modules,
Mathematics and its Applications, vol. 456, Kluwer Academic Publishers, Dordrecht, 1998.

\bibitem[Y]{Yang}C. Yang, Twisted representations of $\BN$-graded vertex algebras over a good field, 
\textit{J. Algebra Appl.} \textbf{24}(7) (2025) 2550180.

\bibitem[Z]{Zhu} Y.-C. Zhu, Modular invariance of characters of vertex operator algebras,
\textit{J. Amer. Math. Soc.} \textbf{9} (1996) 237--302.
\end{thebibliography}
\end{document}